\def\Xint#1{\mathchoice
{\XXint\displaystyle\textstyle{#1}}%
{\XXint\textstyle\scriptstyle{#1}}%
{\XXint\scriptstyle\scriptscriptstyle{#1}}%
{\XXint\scriptscriptstyle\scriptscriptstyle{#1}}%
\!\int}
\def\XXint#1#2#3{{\setbox0=\hbox{$#1{#2#3}{\int}$ }
\vcenter{\hbox{$#2#3$ }}\kern-.6\wd0}}
\def\dashint{\Xint-}
\newcommand{\be}{\begin{equation} \label}
\newcommand{\ee}{\end{equation}}
\newcommand{\bea}{\begin{eqnarray}\label}
\newcommand{\eea}{\end{eqnarray}}
\newcommand{\bas}{\begin{eqnarray*}}
\newcommand{\eas}{\end{eqnarray*}}
\newcommand{\bit}{\begin{itemize}}
\newcommand{\eit}{\end{itemize}}
\newcommand{\lbal}{\left\{ \begin{array}{l}}
\newcommand{\lball}{\left\{ \begin{array}{ll}}
\newcommand{\ear}{\end{array} \right.}
\begin{document}
\title{Partial regularity of suitable weak solutions to the incompressible magnetohydrodynamic equations}

\author{Mengyao Ding$^{1}$,~~Wenwen Huo$^{2}$,~~Chao Zhang$^{2,1}$\thanks{Corresponding author.
E-Mail:
mengyaod@126.com (M. Ding),
huowenwen2023@163.com (W. Huo),
 czhangmath@hit.edu.cn (C. Zhang).}\\
 {\small $^{1}$ Institute for Advanced Study in Mathematics, Harbin Institute of Technology, Harbin 150001, PR China}\\
{\small $^{2}$ School of Mathematics, Harbin Institute of Technology, Harbin 150001, PR China}\\
}
\date{}
\newtheorem{theorem}{Theorem}
\newtheorem{definition}{Definition}[section]
\newtheorem{lemma}{Lemma}[section]
\newtheorem{proposition}{Proposition}[section]
\newtheorem{corollary}{Corollary}\newtheorem{remark}{Remark}
\renewcommand{\theequation}{\thesection.\arabic{equation}}
\catcode`@=11 \@addtoreset{equation}{section} \catcode`@=12
\maketitle{}

\begin{abstract}

This paper establishes a regularity theory for the magnetohydrodynamics (MHD) equations with external forces through scaling analysis. Inspired by the existing methodology, we utilize linearized approximations and the monotonicity property of harmonic functions to construct iterative sequences capturing scaling properties. This work successfully extends Navier-Stokes techniques to MHD coupling and demonstrates that the one dimensional parabolic Hausdorff measure of the possible singular points for the suitable weak solutions is zero.

\begin{description}
\item[2020MSC:] 35Q35; 76B03; 76W05
\item[Keywords:] Magnetohydrodynamic equations; Suitable weak solutions; Partial regularity
\end{description}
\end{abstract}
%%%%%%%%%%%%%%%%%%%%%%%%%%%%%%%%%%%%%%%%%%%%%%%%%%%%%%%%%%%%%%%%%%%%%%%%%%%%%%%%%%%%%%%%%%%%%%%%%%%
\section{Introduction}

In this paper, we investigate the following incompressible magnetohydrodynamic (MHD) equations
\begin{align}\label{equ1-1}
\left\{
  \begin{array}{ll}
    \partial_{t}u-\Delta u+(u\cdot\nabla)u-(B\cdot\nabla)B+\nabla \left(p+\frac{1}{2}|B|^{2}\right)=f,\\
    \partial_{t}B-\Delta B+(u\cdot\nabla)B-(B\cdot\nabla)u=g,\\
    {\rm div}\,u=0,\quad {\rm div}\,B=0,
     \end{array}
\right.
\end{align}
where $u$, $p$ and $B$ represent the velocity field, its pressure and the magnetic field, respectively.
Here, we introduce the modified pressure $P:=p+\frac{1}{2}|B|^{2}$ for notational simplicity.
In \eqref{equ1-1}, the terms $f$ and $g$ account for external forces, with $f$ denoting gravitational or mechanical force
and $g$ describing external magnetic excitation such as current density. 
As regards $f\in L^{q}(\Omega \times I)$ with some $q\in(1, \infty)$, 
we may assume $\operatorname{div} f = 0$ distributionally without loss of generality. Indeed,
the Helmholtz decomposition allows us to write $f = \nabla \Phi + f'$ with $\operatorname{div} f' = 0$ and $\|f'\|_{L^q(\Omega \times I)} \leq C(q, \Omega\times I) \|f\|_{L^q(\Omega \times I)}$, and the gradient term $\nabla \Phi$ can be absorbed into $\nabla P$. 

In the absence of the magnetic field, the system \eqref{equ1-1} under the force-free situation reduces to the incompressible
Navier-Stokes equations 
\begin{align}\label{equ1-1'}
\left\{
  \begin{array}{ll}
    \partial_{t}u-\Delta u+(u\cdot\nabla)u+\nabla P=0,\\
    {\rm div}\,u=0.
     \end{array}
\right.
\end{align}
In the seminal work \cite{CKN1982}, Caffarelli, Kohn and Nirenberg proved that the suitable weak solutions of \eqref{equ1-1'} remain locally regular
as long as 
\begin{align}\label{gu}
\limsup_{r\rightarrow 0}\frac{1}{r}\int_{Q_{r}}|\nabla u|^{2}\,dxdt\le \epsilon_0
\end{align}
with a certain absolute constant $\epsilon_0>0$. This local regularity criteria implies that solutions 
are smooth away from a set with a null Hausdorff measure of dimension one.
The original proof in \cite{CKN1982} employs a series of delicate iterations, 
which have been simplified by Lin \cite{Lin1998}, Ladyzhenskaya and Seregin \cite{LS1999}, among others.
Alternative local regularity criteria replacing \eqref{gu} have also been developed; 
for instance, Tian and Xin \cite{TX1999} showed that smallness of either the scaled local $L^2$-norm of 
vorticity or the scaled local energy is sufficient to derive smoothness in a neighborhood.
Very recently, Wang \cite{Wang2025} introduced a novel geometric framework to prove the celebrated Caffarelli-Kohn-Nirenberg theorem, 
primarily utilizing a new compactness lemma and monotonicity properties of harmonic functions. 

Let us review some related literature concerning the MHD equations without external forces (i.e., $f\equiv0$ and $g\equiv0$).
 Building upon the approach developed for the Navier-Stokes equations, He and Xin \cite{HX2005} 
 established interior regularity under suitable local smallness conditions and suggested that the velocity field plays a more dominant role than the magnetic field.
% are satisfied in terms of the scaled mixed norm of the velocity and the magnetic field. 
Besides, an alternative regularity condition was presented by Kang and Lee in \cite{KL2009}, 
where the authors required the smallness of a scaled norm of the velocity $u$, coupled with the boundedness (not necessarily smallness) 
of a corresponding norm for the magnetic field $B$.  The observation that smallness of $u$ alone might provide a desired control of the full solution $(u, B)$ was further pursued by Wang and Zhang \cite{WZ2013}.
The authors determined a partial regularity condition relying exclusively on the scaled norm of the velocity $u$;
more specifically, they proved the existence of $r_0>0$ and $\varepsilon_0>0$ such that the condition 
\begin{align*}
\sup_{0<r<r_{0}}r^{1-\frac{3}{p}-\frac{2}{q}}\left\|u\right\|_{L^{p,q}(Q_{r})}\le\epsilon_0 \quad{\rm with}~~1\leq \frac{3}{p}+\frac{2}{q} \leq 2
\end{align*} 
suffices to guarantee the local regularity of suitable weak solutions.
Beyond the aforementioned results closely related to our motivation, 
various other regularity criteria for both Navier-Stokes and MHD systems have been extensively studied; see, e.g., 
\cite{CW2010,CY2015,CY2016,CMZ2008,CQZ2022,DD2007,GKT2007,H2013,JW2014,KK2012,KK2014,LW2018,MNS2007,S1988,V2007,W2008,WWZ2016,Z2007}.

Incorporating the external force term into the MHD equations significantly enhances the theory's capability to describe complex physical scenarios.
 In the presence of external forces, Chamorro and He \cite{2CH2021} reformulated the MHD equations into a symmetric form by 
adopting the Elsasser formulation, and then established regularity criteria by extending the Caffarelli-Kohn-Nirenberg theory to the MHD equations within a parabolic Morrey-space framework.
Furthermore, Chamorro and He \cite{1CH2021} showed that outside a set of vanishing parabolic one-dimensional Hausdorff measure, 
the velocity field $u$ and the magnetic field  $B$ are locally regular provided $f, g\in L^2((0, T); H^1(\Omega))$.  
Under some local boundedness assumptions, a gain of regularity for weak solutions to the MHD equations was obtained by 
Chamorro, Cortez, He and Jarr\'{i}n \cite{CCHJ2021} in the general setting of the Serrin regularity theory.
In addition, for the case $g=0$, Ren, Wang and Wu \cite{RWW2016} studied the partial regularity of the suitable weak solutions
to the fractional MHD equations in $\mathbb{R}^n$ for $n = 2, 3$, showing a non-dominant role of the magnetic field for the local regularity.

The purpose of this paper is to study the Caffarelli-Kohn-Nirenberg type regularity theory for the incompressible MHD equation \eqref{equ1-1} 
in the presence of external forces $f, g \in L^q$ ($q > \frac{5}{2}$).  Our approach builds upon the geometric framework recently developed by Wang \cite{Wang2025} for the Navier-Stokes equations. 
By adapting this framework and systematically tackling the new challenges posed by the magnetic field $B$ as well as the external forces $f$ and $g$, 
we successfully extend the classical partial regularity theory to the forced MHD system.  Moreover, combining our main result with the strategy pioneered by Wang and Zhang \cite{WZ2013}, we obtain a regularity criteria (stated in Corollary \ref{pro1-1}) 
that requires only the norm of the velocity field $u$ to be small, while the external forces merely need to satisfy a mild integrability condition.
\\[5pt]
\noindent{\bf Notations.}
Following the common usage, we denote $B_{r}(x)$ by a ball with radius $r>0$ centered at $x\in \Omega$.
We let $Q_{r}(x,t)$  be a cylinder given by $Q_{r}(x,t)=B_{r}(x)\times(t-r^{2},t)$ with $(x,t)\in \Omega\times (0,T)$.
When there is no risk of ambiguity, we will omit the center and simplify the notations by $B_r = B_r(x)$ and $Q_r=Q_{r}(x,t)$.
For an integrable function $\varphi$, the integral averages of $\varphi$ over $B_{r}$ and $Q_{r}$ are defined as 
\begin{align*}
\dashint_{B_{r}} \varphi\,dx=\frac{1}{|B_{r}|} \int_{B_{r}} \varphi\,dx,\quad
\dashint_{Q_{r}} \varphi\,dxdt=\frac{1}{|Q_{r}|} \int_{Q_{r}} \varphi\,dxdt.
\end{align*}
And for exponents $1\leq p,q\leq\infty$, the space $L^{p,q}(Q_{r})$ is equipped with the norm
\begin{align*}
\left\|\varphi\right\|_{L^{p,q}(Q_{r})} := 
\left(\int_{t-r^2}^t \left( \int_{B_r} |\varphi(y,s)|^p \,dy \right)^{q/p} \,ds \right)^{1/q}, \quad 1\leq p, q < \infty, 
\end{align*}
where the corresponding integral is replaced by the essential supremum in the usual way in the case of $p=\infty$ or $q=\infty$.
In particular, the space $L^{p,p}(Q_{r})$ reduces to the standard Lebesgue space $L^{p}(Q_{r})$.

Given a set $X\subset \mathbb{R}^{3}\times\mathbb{R}$, for any $s\geq0$, the $s$-dimensional parabolic Hausdorff measure 
is defined by  
\begin{align*}
P^{s}(X)=\lim_{\delta\rightarrow 0^{+}}P^{s}_{\delta}(X),
\end{align*}
where $P^{s}_{\delta}$ is a $\delta$-approximation obtained by setting 
\begin{align*}
P^{s}_{\delta}(X)=\inf \left\{\sum_{i} r_{i}^{s}:X\subset \bigcup_{i}Q_{r_{i}}: r_{i}<\delta\right\}.
\end{align*}
In particular, $P^{s}(X)=0$ if and only if for all $\epsilon>0$, the set $X$ can be covered by a collection 
$\{Q_{r_{i}}\}$ such that $\sum_{i}r_{i}^{s}<\epsilon$.

Finally, we use the symbol $C$ to denote a universal constant without emphasizing specific value and may change line by line.
\\[5pt]
\noindent{\bf Main results.}
Our results are established for suitable weak solutions, where the corresponding solution concept for the MHD equations \eqref{equ1-1} 
is defined as below.
\begin{definition}\label{def1-1}
Let $\Omega\subset \mathbb{R}^{3}$ and $T>0$. The triple $(u,B,P)$ is called a suitable weak solution to the MHD equations \eqref{equ1-1} in 
$\Omega\times(-T,0)$ if the following conditions are satisfied:
\begin{itemize}
	\item [\rm (a)] $u, B\in L^{\infty}\left((-T,0);L^2(\Omega)\right)\cap L^{2}\left((-T,0);H^1(\Omega)\right)$, 
	$P\in L^{\frac{3}{2}}\left((-T,0),L^{\frac{3}{2}}(\Omega)\right)$;
	
	\item [\rm (b)] $(u,B,P)$ solves the MHD equations in $\Omega\times(-T,0)$ in the sense of distribution;
	
	\item [\rm (c)] $(u,B,P)$ satisfies the following inequality
	\begin{align}\label{equ1-2}
	&\int_{\Omega}\left(|u(x,t)|^{2}+|B(x,t)|^{2}\right)\phi(x,t) \, dx
	+2\int_{-T}^{t}\int_{\Omega}\left(|\nabla u |^{2}+|\nabla B |^{2}\right)\phi \,dxds \nonumber\\
	\leq&\int_{-T}^{t}\int_{\Omega}\left(|u|^{2}+|B|^{2}\right)\left(\partial_{s}\phi+\Delta\phi\right)\,dxds
	+\int_{-T}^{t}\int_{\Omega}\left(u\cdot\nabla\phi\right)\left(|u|^{2}+|B|^{2}+2P\right)\,dxds \\
	&-2\int_{-T}^{t}\int_{\Omega}\left(B\cdot\nabla\phi\right)\left(u\cdot B\right)\,dxds
	+2\int_{-T}^{t}\int_{\Omega}\phi\left(u\cdot f+B\cdot g\right)\,dxds,\quad{\rm a.e.} ~t\in(-T,0) \nonumber
	\end{align}
	for each $\phi\geq 0$ in $C_{0}^{\infty}\left(\Omega\times(-T,0)\right)$.
\end{itemize}

\end{definition}

To address disturbances posed by the external force terms, 
we introduce the functionals for $f$ and $g$:
\begin{align}\label{F}
F_{m,r}\left[f,g\right]:=r^{2}\sqrt[m]{\dashint_{Q_{r}}|f|^{m}\,dxdt}+r^{2}\sqrt[m]{\dashint_{Q_{r}}|g|^{m}\,dxdt},\quad m >\frac{5}{2}.
\end{align}
Adopting the method from \cite{Wang2025,Lin1998}, we invoke the cubical functional 
\begin{align}\label{Cr}
C_{r}\left[u,B,P\right]:=\sqrt[3]{\dashint_{Q_{r}}|u|^{3}\,dxdt}+\sqrt[3]{\dashint_{Q_{r}}|B|^{3}\,dxdt}+r\sqrt[\frac{3}{2}]{\dashint_{Q_{r}}|P|^{\frac{3}{2}}\,dxdt},
\end{align}
which serves to capture the scaled properties of the suitable weak solution $(u, B, P)$.
The local regularity of solutions can be established by imposing smallness conditions on $C_{1}\left[u, B, P\right]$ and $F_{m,1}\left[f,g\right]$.

We are now in a position to present our main results.

\begin{theorem}\label{the1-1}
Let  $(u, B, P)$ be a suitable weak solution of \eqref{equ1-1} in $Q_1$, where $f, g\in L^{m}(Q_1)$ with $m>\frac{5}{2}$. Then there exists a small positive constant 
$\delta_ {*}=\delta_{*}(m)$ such that if
\begin{align}\label{assump1}
C_{1}\left[u,B,P\right]
+\sqrt{F_{m,1}\left[f,g\right]}\leq \delta_{*},
\end{align}
then we have
\begin{align*}
u,B\in C^{\alpha}(\overline{Q}_{\frac{1}{2}})
\end{align*}
for some small $\alpha\in(0,1)$.
\end{theorem}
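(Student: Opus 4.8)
The plan is to run a Campanato-type excess-decay iteration driven by a linearization/compactness scheme, following the geometric philosophy of Wang \cite{Wang2025}. Fix an arbitrary base point $z_0\in\overline{Q}_{1/2}$ and recall that both the system \eqref{equ1-1} and the class of suitable weak solutions are invariant under the parabolic rescaling $u_\lambda(x,t)=\lambda u(\lambda x,\lambda^2t)$, $B_\lambda=\lambda B(\lambda\cdot)$, $P_\lambda=\lambda^2P(\lambda\cdot)$, $f_\lambda=\lambda^3f(\lambda\cdot)$. I would introduce the excess
\[
E(z_0,r):=\Big(\dashint_{Q_r(z_0)}\big|u-(u)_{Q_r(z_0)}\big|^3\Big)^{1/3}+\Big(\dashint_{Q_r(z_0)}\big|B-(B)_{Q_r(z_0)}\big|^3\Big)^{1/3}+r\Big(\dashint_{Q_r(z_0)}\big|P-(P)_{Q_r(z_0)}\big|^{3/2}\Big)^{2/3}
\]
and set $\Phi(z_0,r):=E(z_0,r)+\sqrt{F_{m,r}[f,g]}$. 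The whole theorem then reduces to a single geometric decay step: there exist $\theta\in(0,\tfrac14)$ and $\varepsilon_0>0$, depending only on $m$, such that whenever $C_r[u,B,P]+\sqrt{F_{m,r}[f,g]}\le\varepsilon_0$ one has $\Phi(z_0,\theta r)\le\tfrac12\,\Phi(z_0,r)$.

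First I would extract the quantitative content of the smallness hypothesis. Testing the local energy inequality \eqref{equ1-2} against suitable cut-offs yields Caccioppoli-type bounds on $\sup_t\int_{B_r}(|u|^2+|B|^2)$ and $\int_{Q_r}(|\nabla u|^2+|\nabla B|^2)$ in terms of $C_r$ and $F_{m,r}$, while the pressure is recovered from the elliptic relation $-\Delta P=\partial_i\partial_j(u_iu_j-B_iB_j)-\operatorname{div}f$. Splitting $P$ into a Newtonian part driven by the quadratic tensor $u\otimes u-B\otimes B$ (hence quadratically small in $C_r$) plus a harmonic remainder with interior estimates gives local control of the pressure excess. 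Combining these bounds with Aubin--Lions compactness — using the equations to bound $\partial_t u,\partial_t B$ in a negative Sobolev norm — furnishes the compactness lemma: any sequence of suitable weak solutions with uniformly bounded $C_1$ admits a subsequence converging strongly in $L^3(Q_{1/2})$ for $(u,B)$ and weakly for $P$ to a suitable weak solution.

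The core is the decay step, which I would prove by contradiction. If it failed there would be suitable weak solutions $(u_k,B_k,P_k)$ with forces $(f_k,g_k)$ and $\varepsilon_k:=C_1[u_k,B_k,P_k]+\sqrt{F_{m,1}[f_k,g_k]}\to0$ violating the conclusion. Normalizing $(\tilde u_k,\tilde B_k,\tilde P_k):=\varepsilon_k^{-1}(u_k,B_k,P_k)$, the rescaled triples have $C_1$-quantities of unit size, satisfy a system whose nonlinear terms carry a prefactor $\varepsilon_k$, and whose forcing obeys $F_{m,1}[f_k/\varepsilon_k,g_k/\varepsilon_k]=\varepsilon_k^{-1}F_{m,1}[f_k,g_k]\le\varepsilon_k\to0$ — this is exactly why the hypothesis \eqref{assump1} is posed with $\sqrt{F_{m,1}}$ rather than $F_{m,1}$, so that the forcing survives the normalization as an infinitesimal. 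By the compactness lemma they converge to a limit $(\tilde u,\tilde B,\tilde P)$ solving the decoupled linear system $\partial_t\tilde u-\Delta\tilde u+\nabla\tilde P=0$, $\operatorname{div}\tilde u=0$, and $\partial_t\tilde B-\Delta\tilde B=0$. For this Stokes/heat limit the monotonicity and interior smoothing of harmonic (caloric) functions give the sharp decay $E_{(\tilde u,\tilde B,\tilde P)}(z_0,\theta)\le C_0\,\theta\,E_{(\tilde u,\tilde B,\tilde P)}(z_0,1)$; choosing $\theta$ so that $C_0\theta\le\tfrac14$ and transferring the estimate back through the strong $L^3$ convergence contradicts the failure of the decay step, while the force scaling $\sqrt{F_{m,\theta r}}\le\theta^{\,1-\frac{5}{2m}}\sqrt{F_{m,r}}$ (positive exponent precisely because $m>\tfrac52$) absorbs the residual forcing into the factor $\tfrac12$.

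Finally I would iterate. Choosing $\delta_*$ small enough that the smallness in \eqref{assump1} propagates to all dyadic scales — which holds because the running means $(u)_{Q_{\theta^j}}$ converge and $C_r\lesssim|(u)_{Q_r}|+E(z_0,r)$, so $C_{\theta^j}$ stays below $\varepsilon_0$ — the decay step applies at every scale and yields $E(z_0,\theta^jr)\le2^{-j}\Phi(z_0,r)$, hence $E(z_0,\rho)\le C\rho^{\alpha}$ for some $\alpha=\alpha(m)\in(0,1)$, uniformly in $z_0\in\overline{Q}_{1/2}$; the parabolic Campanato characterization then gives $u,B\in C^{\alpha}(\overline{Q}_{1/2})$. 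I expect the main obstacle to be the pressure: being nonlocal, its excess cannot be iterated directly, and the decomposition into a quadratically small Newtonian part plus a harmonic part with its own monotone decay must be threaded consistently through the rescaling and the compactness step. This is further complicated by the MHD coupling, namely the $B\otimes B$ contribution to $P$ and the mixed transport terms $(B\cdot\nabla)u$, $(u\cdot\nabla)B$, which have to be estimated symmetrically with the velocity terms.
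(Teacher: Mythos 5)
Your overall architecture (compactness/linearization, excess decay, Campanato embedding) is the same as the paper's, and you correctly identify both the role of $\sqrt{F_{m,r}}$ under normalization and the fact that the pressure is the main obstacle. But the obstacle you flag is fatal to the scheme exactly as you set it up: the decay step is \emph{false} because you normalize the pressure by the constant $(P)_{Q_r(z_0)}$. The pressure of a suitable weak solution is determined only modulo an arbitrary function of time, and no equation controls $\partial_t P$. Concretely, for any $a\in L^{3/2}(-1,0)$ the triple $(u,B,P)=(0,0,a(t))$ with $f=g=0$ is a suitable weak solution of \eqref{equ1-1}; it is also a Stokes solution, so it equally defeats your claimed decay $E_{\lim}(z_0,\theta)\le C_0\,\theta\,E_{\lim}(z_0,1)$ for the limit system. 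Take $a=\lambda\chi_{(-\theta^2/2,0)}$ with $\lambda$ small and $z_0$ the origin. Then
\begin{align*}
\Phi(0,1)=\Big(\dashint_{Q_1}\big|P-(P)_{Q_1}\big|^{3/2}\,dxdt\Big)^{2/3}\approx 2^{-2/3}\lambda\,\theta^{4/3},
\qquad
\Phi(0,\theta)=\theta\Big(\dashint_{Q_\theta}\big|P-(P)_{Q_\theta}\big|^{3/2}\,dxdt\Big)^{2/3}=\tfrac{1}{2}\,\theta\lambda,
\end{align*}
so $\Phi(0,\theta)/\Phi(0,1)\approx 2^{-1/3}\theta^{-1/3}>1$ for every $\theta<\tfrac14$, while $C_1[u,B,P]+\sqrt{F_{m,1}[f,g]}\approx\lambda\theta^{4/3}$ is arbitrarily small. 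Hence no choice of $(\theta,\varepsilon_0)$ makes your decay step true, and smallness cannot repair it: there is simply no time regularity to be had for $P$. This is precisely why the paper never compares $P$ with a constant: its iterated quantity is $C_{r^k}\left[u-V_k,B-M_k,P-h_k(t)\right]$ with a \emph{time-dependent} normalization $h_k(t)$, constructed as the spatial trace $H(0,t)$ of the harmonic-in-$x$ part $H$ of the solution's own pressure (Lemma \ref{lem3-1}, Proposition \ref{pro3-1}, estimates \eqref{equ3-16}--\eqref{equ3-17''}); only \emph{spatial} decay of $H$ is ever used, and the H\"older conclusion is drawn for $u,B$ alone, the pressure slot being set to zero in the final Campanato step.

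A second, independent gap sits in your contradiction argument: you normalize by $\varepsilon_k=C_1[u_k,B_k,P_k]+\sqrt{F_{m,1}[f_k,g_k]}$, but the statement you are trying to contradict is an inequality between \emph{excesses}. In the degenerate regime $\Phi_k(z_0,1)=o(\varepsilon_k)$ (for instance $u_k$ close to a fixed nonzero constant vector field), both $\Phi_k(z_0,\theta)/\varepsilon_k$ and $\Phi_k(z_0,1)/\varepsilon_k$ tend to zero, the limit has zero excess, and transferring its decay back yields $0\ge 0$ rather than a contradiction. You must either normalize by the excess itself --- which forces you to recenter, produces drift terms $\big((u_k)_{Q_1}\cdot\nabla\big)$ in the recentered equations, and leads essentially to the paper's family of drift systems \eqref{equ3-1} --- or prove, as the paper does, the weaker but sufficient one-step statement that the excess of the \emph{shifted} solution at the smaller scale is at most $\tfrac12$ times the \emph{full} quantity $C_1+\sqrt{F_{m,1}}$ at scale one (Proposition \ref{pro3-1}); that weaker statement is what the $\varepsilon_k$-normalization can actually deliver, and the paper's iteration shows it still produces the geometric bound $\frac{1}{2^{k}}C_{1}\left[u,B,P\right]+\frac{k}{2^{k}}\sqrt{F_{m,1}\left[f,g\right]}$ and hence the H\"older continuity of $u$ and $B$.
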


\begin{remark}\label{r2}
Our proof of Theorem \ref{the1-1} adapts the framework from \cite{Wang2025} for the Navier-Stokes equations, 
where a key quantity $C_r\left[u, P\right]$ was introduced to perform an iteration scheme. 
For the MHD system \eqref{equ1-1}, our analysis successfully reveals that the coupled magnetic field component $B$ does not 
fundamentally compromise the applicability of the existing framework. For handling the disturbance of external force terms,
a delicate analysis is required to guarantee that our regularity criterion is established on the mildest regularity requirements on $f$ and $g$. 
To achieve this, we introduce the functional $F_{m,r}\left[f,g\right]$, designed to admit the necessary decay properties as $r \rightarrow 0$. 
Crucially, its square root $\sqrt{F_{m,r}\left[f,g\right]}$ combined with $C_r\left[u,B,P\right]$ implements iteration procedure,
constructing proper vectors $V_{k}$, $M_{k}$ and time-dependent functions $h_{k}(t)$ satisfying
\begin{align}\label{equ1-4}
C_{r^{k}}\left[u-V_{k}, B-M_{k},P-h_{k}(t)\right]
\leq \frac{1}{2^{k}}C_{1}\left[u,B,P\right]
+\frac{k}{2^{k}}\sqrt{F_{m,1}\left[f,g\right]}, \quad \forall ~k\in \Bbb{N}^+.
\end{align}
Here, the differences $V_{k+1}-V_k$ and $M_{k+1}-M_k$ diminish as $k$ increases. More specifically, 
\begin{align*}
\max\big\{|V_{k+1}-V_{k}|\, ,\,|M_{k+1}-M_{k}|\big\}
\leq& \frac{\Lambda}{2^{k+1}}C_{1}\left[u,B,P\right]
+\frac{\Lambda(k+1)}{2^{k+1}} \sqrt{F_{m,1}\left[f,g\right]}, \quad \forall ~k\in \Bbb{N}^+,
\end{align*}
where $\Lambda$ is a universal constant. These oscillation estimates above directly lead to the local H\"{o}lder continuity asserted in Theorem  \ref{the1-1}.
Thus, while the outer scaffolding of the iteration is inherited from \cite{Wang2025}, 
its internal mechanics are redesigned to accommodate the coupled magnetic field and the external force terms.

\end{remark}

\begin{theorem}\label{the1-2}
Let $(u, B, P)$ be a suitable weak solution of \eqref{equ1-1} in $Q_{1}$. Suppose that $f$ and $g$ belong to $L^{m}(Q_{1})$ with $m>\frac{5}{2}$. 
There exists a small positive constant $\delta_{**}$ such that for some $R>0$, if
\begin{align}\label{equ1-5}
\sup_{0<r\leq R}\frac{1}{r}\int_{Q_{r}}\left(|\nabla u|^{2}+|\nabla B|^{2}\right)dxdt\leq \delta_{**},
\end{align}
then $u$, $B$ are regular near the origin. Furthermore, if $\mathcal{S}$ denotes the singular set for any suitable 
weak solution of the MHD equations, then the one-dimensional parabolic Hausdorff measure of $\mathcal{S}$ is zero.
\end{theorem}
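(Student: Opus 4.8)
The plan is to establish Theorem~\ref{the1-2} in two stages. In the first stage I would convert the smallness of the scaled Dirichlet energy \eqref{equ1-5} into the hypothesis \eqref{assump1} of Theorem~\ref{the1-1} at a suitable small scale, after the natural parabolic rescaling; the $\varepsilon$-regularity of Theorem~\ref{the1-1} then yields the asserted regularity near the origin. In the second stage I would run a parabolic Vitali covering argument for the measure $d\mu=(|\nabla u|^2+|\nabla B|^2)\,dxdt$ to conclude $P^1(\mathcal{S})=0$.

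For the first stage, fix a scale $\rho$ (to be chosen) and pass to the rescaled triple $(u_\rho,B_\rho,P_\rho)(y,s)=(\rho u,\rho B,\rho^2 P)(\rho y,\rho^2 s)$ with forces $(f_\rho,g_\rho)=(\rho^3 f,\rho^3 g)(\rho y,\rho^2 s)$, which is again a suitable weak solution on $Q_1$. A direct computation gives $C_1[u_\rho,B_\rho,P_\rho]=\rho\,C_\rho[u,B,P]$ and $F_{m,1}[f_\rho,g_\rho]=\rho\,F_{m,\rho}[f,g]$, and the rescaled Dirichlet energy $\int_{Q_1}(|\nabla u_\rho|^2+|\nabla B_\rho|^2)$ equals the scale-invariant quantity $\frac1\rho\int_{Q_\rho}(|\nabla u|^2+|\nabla B|^2)\le\delta_{**}$. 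I would bound $C_1[u_\rho,B_\rho,P_\rho]$ by this Dirichlet energy through three scaling-consistent ingredients: (i) the local energy inequality \eqref{equ1-2} to control $\sup_{s}\frac1r\int_{B_r}(|u|^2+|B|^2)$ by the scaled Dirichlet energy plus pressure and force contributions; (ii) a Gagliardo--Nirenberg/Ladyzhenskaya interpolation bounding the scaled $L^3$ norms of $u$ and $B$ by the scaled local energy and the scaled Dirichlet energy; and (iii) the pressure representation $-\Delta P=\partial_i\partial_j(u_iu_j-B_iB_j)$ (here $\operatorname{div}f=0$, so $f$ does not enter) together with Calder\'on--Zygmund theory to bound the scaled $L^{3/2}$ norm of $P$ by the scaled $L^3$ norms of $(u,B)$. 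The force enters only through the energy inequality, and because $m>\tfrac52$ one has $r^2(\dashint_{Q_r}|f|^m)^{1/m}=Cr^{2-5/m}(\int_{Q_r}|f|^m)^{1/m}$ with $2-\tfrac5m>0$, so $F_{m,r}[f,g]\to0$ as $r\to0$; the square root $\sqrt{F_{m,r}}$ appears because the force term is degree one in the energy balance, matching the homogeneity of $C_r$. Choosing $\delta_{**}$ small controls the cubical contribution, and then choosing $\rho\le R$ small makes $\sqrt{F_{m,1}[f_\rho,g_\rho]}=\sqrt{\rho}\,\sqrt{F_{m,\rho}[f,g]}$ small, so that \eqref{assump1} holds for $(u_\rho,B_\rho,P_\rho)$ and Theorem~\ref{the1-1} gives $u,B\in C^\alpha$ near the origin.

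For the second stage, I would translate the criterion to an arbitrary point (the system is translation invariant) to obtain: if $(x_0,t_0)\in\mathcal{S}$ then $\limsup_{r\to0}\frac1r\int_{Q_r(x_0,t_0)}(|\nabla u|^2+|\nabla B|^2)\,dxdt\ge\delta_{**}$, since otherwise \eqref{equ1-5} would hold at some scale $R$ and force regularity. Since $\dashint_{Q_r}d\mu$ converges a.e.\ to the finite density $|\nabla u|^2+|\nabla B|^2$, we have $\frac1r\int_{Q_r}d\mu=Cr^4\dashint_{Q_r}d\mu\to0$ a.e., so $\mathcal{S}$ is Lebesgue-null and hence $\mu(\mathcal{S})=0$. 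Given $\eta>0$, outer regularity of the Radon measure $\mu$ yields an open set $U_\eta\supset\mathcal{S}$ with $\mu(U_\eta)<\eta$. At each $(x_0,t_0)\in\mathcal{S}$ the cylinders $Q_r(x_0,t_0)\subset U_\eta$ with $r<\delta$ and $\mu(Q_r(x_0,t_0))\ge\tfrac{\delta_{**}}{2}r$ form a fine cover; extracting, by the parabolic Vitali lemma, a countable disjoint subfamily $\{Q_{r_i}(x_i,t_i)\}$ with $\mathcal{S}\subset\bigcup_iQ_{\kappa r_i}(x_i,t_i)$ for a universal $\kappa$, I estimate
\[
P^1_{\kappa\delta}(\mathcal{S})\le\kappa\sum_i r_i\le\frac{2\kappa}{\delta_{**}}\sum_i\mu(Q_{r_i})=\frac{2\kappa}{\delta_{**}}\,\mu\Big(\bigsqcup_i Q_{r_i}\Big)\le\frac{2\kappa}{\delta_{**}}\,\mu(U_\eta)<\frac{2\kappa}{\delta_{**}}\,\eta.
\]
Letting $\delta\to0$ and then $\eta\to0$ gives $P^1(\mathcal{S})=0$.

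The hardest part will be the bridge estimate of the first stage, and within it the interplay of the pressure estimate with the magnetic coupling. One must check that the additional $B$-quadratic terms $(B\cdot\nabla)B$, $(B\cdot\nabla)u$ and $(B\cdot\nabla\phi)(u\cdot B)$ appearing in the pressure equation and in \eqref{equ1-2} carry exactly the same scaling as their Navier--Stokes counterparts, so that no scale-breaking contribution is introduced, and that the force is absorbed using only the mild integrability $m>\tfrac52$. Once this scaling-consistent bridge is in hand, the covering argument of the second stage is routine.
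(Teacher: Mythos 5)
Your Stage 2 (the parabolic Vitali covering argument) is correct and is precisely what the paper does -- the paper simply cites the proof of Theorem B in \cite{CKN1982} for it, so your version is if anything more explicit. The genuine gap is in Stage 1, and it is structural, not technical. You propose to fix a \emph{single} scale $\rho$ and bound $C_1[u_\rho,B_\rho,P_\rho]=N_\rho\left[u,B,P\right]=\rho\,C_\rho\left[u,B,P\right]$ directly by the scaled Dirichlet energy through the energy inequality, interpolation, and Calder\'on--Zygmund theory for the pressure. This cannot close at one scale. First, your ingredient (iii) is false in the local setting: $P$ solves $-\Delta P=\partial_i\partial_j(u^iu^j-B^iB^j)$ in $Q_1$ only modulo a part harmonic in $x$, which is controlled by nothing better than $\|P\|_{L^{3/2}(Q_1)}$ at the reference scale; after rescaling it contributes a term of the form $C\rho\, N_1\left[u,B,P\right]^{3/2}$ -- a small factor times a finite but \emph{not small}, solution-dependent quantity (this is exactly the term $CrN_1^{3/2}$ in \eqref{equ4-3} of Lemma \ref{lem4-1}). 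Second, the interpolation (ii) requires the kinetic energy $\sup_t r^{-1}\int_{B_r}(|u|^2+|B|^2)$, and the energy inequality (i) controls that only by the $L^3$-norms of $(u,B)$ and the pressure at a \emph{larger} scale, with constants that are not small. Consequently the best one-scale estimate available has the shape
\begin{align*}
N_\rho \;\le\; C\rho^{1/3}\Bigl(N_1+\sqrt{Y_{m,1}}\Bigr)
\;+\;C\,\Bigl(\text{positive powers of } E_1 \Big/ \text{positive powers of }\rho\Bigr),
\end{align*}
which is the content of Lemmas \ref{lem4-1}--\ref{lem4-2} at the first step. To make the first term $\le \delta_*/2$ you must take $\rho$ small depending on $N_1$, i.e.\ on the solution; but then forcing the error terms (e.g.\ $E_1^{1/3}/\rho^{5/3}$) below $\delta_*/2$ requires $\delta_{**}\lesssim \rho^{5}\delta_*^{3}$, so your threshold $\delta_{**}$ would depend on the solution. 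That contradicts the statement of Theorem \ref{the1-2} (where $\delta_{**}$ is universal) and, worse, invalidates your own Stage 2, which needs one fixed $\delta_{**}$ valid simultaneously at every singular point.

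The repair is the multi-scale iteration the paper runs, and it is exactly here that the hypothesis ``$E_r\le\delta_{**}$ for \emph{all} $r\le R$'' (not just at one scale) is consumed: the one-step estimate is applied at every scale $r_0^i$, giving $N_{r_0^{k+1}}\le\tfrac12 N_{r_0^k}+\tfrac12\sqrt{Y_{m,r_0^k}}+E_{r_0^k}^{1/3}$, whence by Lemma \ref{lem4-2}
\begin{align*}
N_{r_0^{k}}\;\le\;\frac{1}{2^{k}}N_1+\frac{1}{2^{k}}\sum_{i=0}^{k-1}r_0^{i/2}\sqrt{Y_{m,1}}
+\sum_{i=0}^{k-1}\frac{1}{2^{k-1-i}}E_{r_0^{i}}^{1/3}.
\end{align*}
The solution-dependent part decays geometrically in $k$, while the energy errors accumulate only to the \emph{universal} amount $2\delta_{**}^{1/3}$; taking $\delta_{**}\le\delta_*^3/2^6$ (universal) and then $k$ large depending on the solution -- harmless, since only the scale at which Theorem \ref{the1-1} is invoked may depend on the solution -- yields $N_{r_0^k}+\sqrt{Y_{m,r_0^k}}\le\delta_*$, which is Lemma \ref{lem4-3} and completes the first assertion. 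Your scaling identities ($C_1[u_\rho,B_\rho,P_\rho]=\rho C_\rho$, $F_{m,1}[f_\rho,g_\rho]=\rho F_{m,\rho}$) and your treatment of the forces via $m>\tfrac52$ are correct and coincide with the paper's; it is only the single-scale closure that must be replaced by this iteration.
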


As established in \cite{WZ2013}, for the MHD equations without external forces, the velocity field plays a more decisive role than the magnetic field in regularity criteria. This naturally gives rise to the question of whether such velocity dominance persists in the presence of external forces. Through the subsequent corollary, we affirm this phenomenon by simultaneously combining Theorem \ref{the1-2} with \cite[Theorem 1.1]{WZ2013}.

\begin{corollary} \label{pro1-1}
Let $(u, B, P)$ be a suitable weak solution of \eqref{equ1-1} in $Q_{1}$.
Suppose that $f$ and $g$ belong to $L^{m}(Q_{1})$ with $m>\frac{5}{2}$. 
Then there exist $\overline{R}>0$ and a small positive constant $\overline{\delta}_{**}$ such that if $u \in L^{p,q}(Q_{\overline{R}})$ and
\begin{align*}
\sup_{0<r<\overline{R} }r^{1-\frac{3}{p}-\frac{2}{q}}\left\|u\right\|_{L^{p,q}(Q_{r})}<\overline{\delta}_{**}
\end{align*}
with $(p, q)$ satisfying
\begin{align*}
1\leq \frac{3}{p}+\frac{2}{q} \leq 2,\quad 1\leq q\leq\infty,\quad (p,q)\neq (\infty,1),
\end{align*}
then $u$ and $B$ are regular near the origin.
\end{corollary}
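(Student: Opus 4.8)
The plan is to reduce Corollary \ref{pro1-1} to Theorem \ref{the1-2}: it suffices to show that the scaled velocity-smallness hypothesis forces the scaled Dirichlet energy $\frac{1}{r}\int_{Q_r}(|\nabla u|^2+|\nabla B|^2)\,dxdt$ below the threshold $\delta_{**}$ of Theorem \ref{the1-2} for all sufficiently small radii, whereupon the regularity of $u$ and $B$ near the origin follows at once. This is precisely the velocity-dominance phenomenon of Wang and Zhang \cite{WZ2013}, now carried out in the forced setting; the extra force contributions will be absorbed using $f,g\in L^m(Q_1)$ with $m>\frac{5}{2}$.

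First I would derive a local energy (Caccioppoli-type) inequality by inserting into \eqref{equ1-2} a standard cutoff $\phi$ with $\phi\equiv 1$ on $Q_{r/2}$, $\supp\phi\subset Q_r$, and $|\nabla\phi|\lesssim r^{-1}$, $|\partial_t\phi|+|\Delta\phi|\lesssim r^{-2}$. This bounds $\sup_t r^{-1}\int_{B_{r/2}}(|u|^2+|B|^2)$ together with $r^{-1}\int_{Q_{r/2}}(|\nabla u|^2+|\nabla B|^2)$ by a sum of scale-invariant quantities. The crucial structural point --- visible already in \eqref{equ1-2} --- is that every cubic term on the right-hand side carries at least one factor of the velocity: this holds for $(u\cdot\nabla\phi)|u|^2$ and $(u\cdot\nabla\phi)|B|^2$, and, decisively, for the coupling term $-2(B\cdot\nabla\phi)(u\cdot B)$, which is linear in $u$. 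Consequently Hölder's inequality extracts the scaled velocity norm as a small multiplicative factor from each nonlinear contribution, which is the mechanism by which smallness of $u$ alone controls the full pair $(u,B)$.

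Next I would dispose of the pressure. Taking the divergence of the first equation in \eqref{equ1-1} and using $\operatorname{div}u=\operatorname{div}B=0$ (together with $\operatorname{div}f=0$, assumed without loss of generality) gives $-\Delta P=\partial_i\partial_j(u_iu_j-B_iB_j)$, so a local decomposition of $P$ into a Calderón--Zygmund part and a harmonic part (well behaved on smaller balls) lets me estimate the pressure term $\int(u\cdot\nabla\phi)\,2P$; note that this term too is paired with $u$. The force terms $2\int\phi(u\cdot f+B\cdot g)$ are handled by Hölder in space-time using $f,g\in L^m$; the exponent condition $m>\frac{5}{2}$ is exactly what makes the functional $F_{m,r}[f,g]$ of \eqref{F} carry a positive power of $r$ and hence decay as $r\to 0$, so these contributions are uniformly small on small cylinders.

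Assembling these estimates yields, for the standard Caffarelli--Kohn--Nirenberg quantities, a system of interpolation inequalities in which the scaled velocity norm appears as a small coefficient; propagating this smallness across dyadic scales then gives $\frac{1}{r}\int_{Q_r}(|\nabla u|^2+|\nabla B|^2)\,dxdt\le\delta_{**}$ for all $r\le\overline{R}$, once $\overline{R}$ and $\overline{\delta}_{**}$ are chosen small enough --- invoking here \cite[Theorem 1.1]{WZ2013} and its underlying interpolation lemmas, which already organize this bootstrap for the force-free MHD system. Theorem \ref{the1-2} then applies and delivers regularity of $u$ and $B$ near the origin. I expect the main obstacle to be closing this bootstrap uniformly in $r$: the scaled gradient energy is controlled only in terms of the scaled $L^2$-energy and the pressure, which must in turn be reabsorbed, and the argument must remain valid throughout the admissible range $1\le\frac{3}{p}+\frac{2}{q}\le 2$ --- in particular along the scaling-critical line $\frac{3}{p}+\frac{2}{q}=2$ and up to (but excluding) the endpoint $(p,q)=(\infty,1)$, where the Hölder and interpolation exponents become borderline and require the sharpest form of the estimates.
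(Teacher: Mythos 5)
Your overall reduction coincides with the paper's: restrict attention to the critical line $\frac{3}{p}+\frac{2}{q}=2$ (the paper notes this suffices), run a Wang--Zhang type bootstrap to force $\frac{1}{r}\int_{Q_r}(|\nabla u|^2+|\nabla B|^2)\,dxdt\le\delta_{**}$ for all small $r$, and then invoke Theorem \ref{the1-2}; the interpolation inequalities you cite from \cite[Lemmas 3.1--3.3]{WZ2013} are exactly the ones the paper imports as \eqref{equ4-9}--\eqref{equ4-11}, and your treatment of the forces via the decay coming from $m>\frac{5}{2}$ matches the role of $M_r[f,g]$. However, there is a genuine gap at the step where you generate the energy inequality: you propose to test \eqref{equ1-2} with a plain cutoff $\phi$ satisfying $|\partial_t\phi|+|\Delta\phi|\lesssim r^{-2}$. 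That choice cannot close the bootstrap. The linear term it produces is $r^{-2}\int_{Q_r}(|u|^2+|B|^2)$, which after the scale-invariant normalization is comparable to $K_r[u,B]$, and $K_r\le A_r$; no smallness attaches to this term (the velocity smallness enters only the cubic terms, and $K_r[B]$ carries no factor of $u$ at all), so one only gets $A_{r/2}[u,B]+E_{r/2}[u,B]\le C\,A_r[u,B]+\cdots$ with $C>1$, an inequality that cannot be iterated to give decay. The paper (following \cite{CKN1982,WZ2013}) instead tests with $\phi=\Gamma\zeta$, where $\Gamma$ is the backward heat kernel concentrated at scale $r$: the caloric cancellation $\partial_s\Gamma+\Delta\Gamma=0$ removes the dangerous term on the inner cylinder, and the mismatch between $\Gamma\simeq r^{-3}$ on $Q_r$ and the bound $C\rho^{-5}$ on $|\Gamma\Delta\zeta|+|\Gamma\partial_s\zeta|+|\nabla\Gamma\cdot\nabla\zeta|$ is precisely what produces the small prefactor $\left(\frac{r}{\rho}\right)^{2}$ multiplying $K_\rho[u,B]$, $A_\rho[u,B]$ and $M_\rho[f,g]$ in \eqref{equ4-8}. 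That factor is the engine of the entire scheme; without it the argument fails at the first step.

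A second, related omission: you correctly flag the reabsorption of the pressure as the main obstacle, but leave it unresolved. The paper's device is to build the pressure into the iterated quantity $\tilde{F}[r]=A_r[u,B]+E_r[u,B]+\overline{\delta}_{**}^{1/2}\tilde{H}_r[P,p',q']+M_r[f,g]$ from \eqref{equ4-12}, where the weight $\overline{\delta}_{**}^{1/2}$ exploits the fact that in \eqref{equ4-8} the pressure always appears multiplied by the small velocity norm; and since the pressure estimate \eqref{equ4-11} couples two different scales, the iteration must be run over three scales $r=\theta\rho$, $\rho=\theta\kappa$ before one obtains $\tilde{F}[\theta^{2}\kappa]\le\frac{1}{2}\tilde{F}[\kappa]$ and hence the desired smallness of $E_r$. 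Your Calder\'{o}n--Zygmund-plus-harmonic decomposition of $P$ could in principle substitute for \eqref{equ4-11}, but some such scale-coupled, weighted absorption of the pressure into the iterated functional is indispensable, and it is exactly the part your outline does not supply.
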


\noindent
{\bf Structure of the present work.}
Section \ref{sec2} collects key preliminary lemmas. 
In Section \ref{sec3}, motivated by the works of \cite{Wang2025, Lin1998}, 
we analyze the scaling properties of the MHD equations through a linearization-based approximation scheme. 
This approach establishes the Caffarelli-Kohn-Nirenberg theory for the equations \eqref{equ1-1}, as stated in Theorem \ref{the1-1}.
Building on the result of Theorem \ref{the1-1}, Section \ref{sec4} is dedicated to proving Theorem \ref{the1-2} and Corollary \ref{pro1-1}, 
which develop the partial regularity theory under an alternative regularity condition.

\section{Preliminaries}\label{sec2}

As essential ingredients for the  the subsequent iterative analysis,
we employ monotonicity formulas  adapted from \cite{Wang2025}. 
These formulas can be obtained via applications of the H\"{o}lder inequality, Poincar\'{e} inequality and the classical 
monotonicity inequality for harmonic functions; see the detailed proof in \cite[Lemma 2 and Lemma 3]{Wang2025}.

\begin{lemma} [\label{lem2-3}Inhomogeneous Monotonicity]
{\rm (a)} For any $1\leq q<3$ and $1\leq p\leq p^{*}:=\frac{3q}{3-q}$, 
there are universal constants $C_{1}$ and $C_{2}$ such that for any weak solution $u$ in $B_{1}\subset\mathbb{R}^3$ of
\begin{align*}
\Delta u={\rm div}\,F,
\end{align*}
we have for any $0<r\leq 1$:
\begin{align}\label{equ2-3}
\dashint_{B_{r}}|u|^{p}\,dx
\leq C_{1}\dashint_{B_{1}}|u|^{p}\,dx+\frac{C_{2}}{r^{3}}\Big(\dashint_{B_{1}}|F|^{q}\,dx\Big)^{\frac{p}{q}}.
\end{align}
{\rm (b)}  For any $1< p<\infty$, there are universal constants $C_{1}$ and $C_{2}$ such that 
for any $G=\{G_{ij}\}$, $G_{ij}\in L^{p}(B_{1})$ and any distribution solution $u$ in $B_{1}\subset\mathbb{R}^3$ of
\begin{align*}
\Delta u={\rm div}^{2}\,G:=\sum_{i,j=1}^{3}\left(G_{ij}\right)_{x_{i}x_{j}},
\end{align*}
we have for any $0<r\leq 1$:
\begin{align}\label{equ2-2}
\dashint_{B_{r}}|u|^{p}\,dx
\leq C_{1}\dashint_{B_{1}}|u|^{p}\,dx+\frac{C_{2}}{r^{3}}\dashint_{B_{1}}|G|^{p}\,dx.
\end{align}
\end{lemma}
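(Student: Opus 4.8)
The plan is to split the solution into a harmonic part, for which the monotonicity is classical, and a corrector that absorbs the right-hand side and whose $L^p$-size is governed by $F$ (resp. $G$) through elliptic estimates. I would treat (a) and (b) in parallel, the only genuine difference being the elliptic bound used for the corrector and the resulting admissible range of exponents.

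For part (a), let $w$ be the weak solution of the Dirichlet problem $\Delta w=\mathrm{div}\,F$ in $B_1$ with $w=0$ on $\partial B_1$, and set $v:=u-w$, so that $\Delta v=0$ in $B_1$. The $L^q$ theory for the divergence-form datum yields $\|\nabla w\|_{L^q(B_1)}\le C(q)\|F\|_{L^q(B_1)}$, and since $1\le q<3$ the Sobolev--Poincar\'{e} embedding $W_0^{1,q}(B_1)\hookrightarrow L^{p^*}(B_1)$ gives $\|w\|_{L^{p^*}(B_1)}\le C\|F\|_{L^q(B_1)}$; for $1\le p\le p^*$, H\"{o}lder's inequality on the bounded ball then produces $\dashint_{B_1}|w|^p\,dx\le C(p,q)\big(\dashint_{B_1}|F|^q\,dx\big)^{p/q}$. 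Since each component of $v$ is harmonic, Kato's inequality makes $|v|$, and hence $|v|^p$ for $p\ge1$, subharmonic, so the classical monotonicity of solid averages of subharmonic functions gives $\dashint_{B_r}|v|^p\,dx\le\dashint_{B_1}|v|^p\,dx$ for every $0<r\le1$ (letting the comparison radius tend to $1^-$ and using $|v|^p\ge0$ with $v\in L^p(B_1)$). Writing $u=v+w$ and applying convexity, I would bound the harmonic part by the previous two displays together with $\dashint_{B_1}|v|^p\le C(\dashint_{B_1}|u|^p+\dashint_{B_1}|w|^p)$, and the corrector part by the trivial scaling $\dashint_{B_r}|w|^p\le r^{-3}\dashint_{B_1}|w|^p$; since $r^{-3}\ge1$, the remaining lower-order term is absorbed and \eqref{equ2-3} follows with universal $C_1,C_2$.

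Part (b) is structurally identical, with $w$ the zero-boundary solution of $\Delta w=\mathrm{div}^2 G$ and $v=u-w$ harmonic. The sole change is the corrector estimate: because $\partial_i\partial_j\Delta^{-1}$ is a Calder\'{o}n--Zygmund operator of order zero, the solution map $G\mapsto w$ is bounded on $L^p(B_1)$ for every $1<p<\infty$, giving directly $\dashint_{B_1}|w|^p\,dx\le C(p)\dashint_{B_1}|G|^p\,dx$ with no Sobolev step. This is precisely why the admissible range $1<p<\infty$ (the $L^p$-boundedness range of singular integrals) replaces $p\le p^*$. The harmonic monotonicity, the convex splitting, and the $r^{-3}$ scaling are reused verbatim to deduce \eqref{equ2-2}.

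The main obstacle I anticipate is technical rather than conceptual: securing the corrector estimates with constants depending only on $(p,q)$ and not on $u$ or $r$, and handling the endpoints. In (a) the $L^q$ Dirichlet estimate is a genuine Calder\'{o}n--Zygmund bound, valid for $1<q<\infty$; at the endpoint $q=1$ one must instead invoke the weak-type mapping of the Riesz potential $I_1:L^1\to L^{p^*,\infty}$ together with a Kolmogorov-type argument, which recovers $\dashint_{B_1}|w|^p$ for $p<p^*$ and is the delicate point. In (b) the same failure of $L^1$/$L^\infty$ bounds for singular integrals is exactly what excludes $p\in\{1,\infty\}$. Beyond this, one only needs to track the convexity constants and verify the up-to-$r=1$ harmonic monotonicity, both of which are routine.
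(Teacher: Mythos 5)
Your proposal is, in substance, the paper's own argument: the paper does not prove Lemma \ref{lem2-3} internally but refers to \cite[Lemmas 2 and 3]{Wang2025}, and the ingredients it names there---the H\"older inequality, the Poincar\'e/Sobolev inequality, and the classical monotonicity of harmonic functions---are exactly the ones you deploy, via the same splitting $u=v+w$ with $w$ the zero-boundary corrector and $v$ harmonic, the Calder\'on--Zygmund bound for $w$ (gradient estimate plus Sobolev embedding in (a), $L^p$-boundedness of $\partial_i\partial_j\Delta^{-1}$ in (b)), the solid-mean monotonicity of the subharmonic function $|v|^p$, and the crude $r^{-3}$ rescaling of the corrector term. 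For $1<q<3$ in part (a), and for all $1<p<\infty$ in part (b), this closes the argument with constants depending only on $p$ and $q$.

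The one point that needs a stronger statement than ``delicate'' is the endpoint you isolate, $(q,p)=(1,p^*)=(1,\tfrac32)$: there the inequality of part (a) is actually \emph{false}, so no refinement of your Kolmogorov step (which does settle $q=1$, $p<\tfrac32$) can rescue it. Indeed, take $F_n=\varphi_n e_1$ with $\varphi_n$ a mollifier supported in $B_{1/n}$, $\int\varphi_n=1$, and let $u_n=(\partial_1 N)\ast\varphi_n$, where $N(x)=-\frac{1}{4\pi|x|}$; then $u_n$ is smooth and solves $\Delta u_n={\rm div}\,F_n$ in $B_1$, and on the cone $\{x_1\ge \tfrac34|x|\}$ one checks $u_n(x)\ge c|x|^{-2}$ for $2/n\le|x|\le1$. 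Consequently $\dashint_{B_r}|u_n|^{3/2}\,dx\ge c\,r^{-3}\log(nr/2)$, while $\dashint_{B_1}|u_n|^{3/2}\,dx\le C\log n+C$ and $\big(\dashint_{B_1}|F_n|\,dx\big)^{3/2}\le C$; fixing $r$ and letting $n\to\infty$ forces $C_1\ge c\,r^{-3}/C$, contradicting the universality of $C_1$ as $r\to0$. So the admissible range should be read as $1<q<3$ (or $p<p^*$ when $q=1$); your proof is complete on that corrected range, and the defect is immaterial for this paper, which only invokes part (b) (with $p=\tfrac32$, for the pressure) and the separate interpolated monotonicity of Lemma \ref{lem2-4}.
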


\begin{lemma} [\label{lem2-4}Interpolated Monotonicity]
There are universal constants $C_{1}$ and $C_{2}$ such that for any $u$ in $H^{1}(B_{1})$ for $B_{1}\subset\mathbb{R}^3$, we have for $0<r\leq 1$:
\begin{align}\label{equ2-3}
\dashint_{B_{r}}|u|^{3}\,dx
\leq C_{1}\dashint_{B_{1}}|u|^{3}\,dx+\frac{C_{2}}{r^{3}}\Big(\int_{B_{\frac{3}{4}}}|u|^{2}\,dx\Big)^{\frac{1}{2}}\int_{B_{1}}|\nabla u|^{2}\,dx.
\end{align}
\end{lemma}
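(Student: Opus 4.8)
The plan is to isolate the constant (mean) part of $u$ on $B_{3/4}$ from its oscillation, dispatch the former by the trivial monotonicity of a harmonic function together with Jensen's inequality, and control the latter by a Sobolev--Poincar\'e interpolation. Set $\bar u:=\dashint_{B_{3/4}}u\,dx$ and write $u=\bar u+v$ with $v:=u-\bar u$, so that $|u|^3\le C|\bar u|^3+C|v|^3$. The constant $\bar u$ is a harmonic function, for which monotonicity is immediate: $\dashint_{B_r}|\bar u|^3\,dx=|\bar u|^3$ for every $0<r\le1$, and by Jensen's (H\"older's) inequality $|\bar u|^3=\big|\dashint_{B_{3/4}}u\,dx\big|^3\le\dashint_{B_{3/4}}|u|^3\,dx\le C_1\dashint_{B_1}|u|^3\,dx$, the last step using $B_{3/4}\subset B_1$ so the averaging constant is bounded. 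This delivers the first term on the right-hand side.

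For the oscillation $v$ I would first localise: for $0<r\le\frac34$ one has $B_r\subset B_{3/4}$, whence
\[
\dashint_{B_r}|v|^3\,dx=\frac{1}{|B_r|}\int_{B_r}|v|^3\,dx\le\frac{1}{|B_r|}\int_{B_{3/4}}|v|^3\,dx\le\frac{C}{r^3}\int_{B_{3/4}}|v|^3\,dx,
\]
which is the source of the factor $r^{-3}$; the remaining range $\frac34<r\le1$ is trivial since then $|B_1|/|B_r|$ is bounded and $\dashint_{B_r}|u|^3\le C\dashint_{B_1}|u|^3$. Because $v$ has zero mean on $B_{3/4}$, the interpolation $\|v\|_{L^3}\le\|v\|_{L^2}^{1/2}\|v\|_{L^6}^{1/2}$ (H\"older) combined with the Sobolev--Poincar\'e bound $\|v\|_{L^6(B_{3/4})}\le C\|\na v\|_{L^2(B_{3/4})}$ gives, after cubing and spending half a power of Poincar\'e $\|v\|_{L^2}\le C\|\na v\|_{L^2}$,
\[
\int_{B_{3/4}}|v|^3\,dx\le C\|v\|_{L^2(B_{3/4})}^{3/2}\|\na v\|_{L^2(B_{3/4})}^{3/2}\le C\,\|v\|_{L^2(B_{3/4})}\int_{B_{3/4}}|\na v|^2\,dx.
\]
Finally $\na v=\na u$, so $\int_{B_{3/4}}|\na v|^2\le\int_{B_1}|\na u|^2$, and $\|v\|_{L^2(B_{3/4})}\le\|u\|_{L^2(B_{3/4})}+|\bar u|\,|B_{3/4}|^{1/2}\le2\big(\int_{B_{3/4}}|u|^2\,dx\big)^{1/2}$ by the triangle inequality and Cauchy--Schwarz. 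Assembling the constant-part and oscillation estimates gives exactly the claimed bound, with $C_1,C_2$ depending only on the fixed geometry of $B_{3/4}\subset\R^3$ and hence universal.

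The delicate point -- and the step I expect to be the main obstacle -- is to produce the \emph{interpolated} product $\big(\int_{B_{3/4}}|u|^2\big)^{1/2}\int_{B_1}|\na u|^2$ rather than the cruder $\big(\int_{B_1}|\na u|^2\big)^{3/2}$ that a direct appeal to Lemma \ref{lem2-3}(a) (with $F=\na u$, $q=2$, $p=3$) would yield; these two quantities are genuinely not comparable in general. The resolution is precisely to subtract the mean $\bar u$ and then estimate $\|v\|_{L^2}=\|u-\bar u\|_{L^2}\le2\|u\|_{L^2}$ by the triangle inequality, \emph{retaining} this $L^2$ factor of $u$, instead of collapsing it through Poincar\'e -- which would reinsert a gradient and return the homogeneous power $3/2$. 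One must also resist using the full harmonic extension of $u|_{\partial B_{3/4}}$ as the comparison function: its interior $L^2$ norm is not controlled by $\|u\|_{L^2(B_{3/4})}$ (a boundary layer of $u$ can make the extension's $L^2$ norm large while $\|u\|_{L^2(B_{3/4})}$ stays small), so that route cannot reach the stated form, and the zero-mean constant is the correct harmonic comparison. The only remaining care is to confirm that the Sobolev, Poincar\'e and interpolation constants are universal, which holds since all are taken on the fixed ball $B_{3/4}$.
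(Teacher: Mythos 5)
Your proposal is correct and follows essentially the same route as the paper's proof (the paper only cites \cite[Lemma 3]{Wang2025}, describing it as an application of the H\"older inequality, the Poincar\'e inequality and harmonic monotonicity): mean--oscillation splitting $u=\bar u+v$ on $B_{3/4}$, the interpolation $\|v\|_{L^3}\le\|v\|_{L^2}^{1/2}\|v\|_{L^6}^{1/2}$, Sobolev--Poincar\'e for the mean-zero part, and one extra use of Poincar\'e to convert $\|v\|_{L^2}^{3/2}\|\nabla v\|_{L^2}^{3/2}$ into $\|v\|_{L^2}\|\nabla v\|_{L^2}^{2}$, which is exactly how the mixed product $\big(\int_{B_{3/4}}|u|^2\,dx\big)^{1/2}\int_{B_1}|\nabla u|^2\,dx$ must arise. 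All your constants are taken on the fixed ball $B_{3/4}$ and are therefore universal, so the argument is complete as written.
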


Finally, we come back to the MHD equations and provide a fundamental local energy estimate of  suitable weak solutions.
\begin{lemma}\label{lem2-5}
Let $(u,B,P)$ be a suitable weak solution of \eqref{equ1-1} in $Q_{1}$. Suppose that $f$ and $g$ belong to $L^{m}(Q_{1})$ with $m>\frac{5}{2}$. 
Then there exists a constant $C>0$ such that
\begin{align*}
&\sup_{t\in(-\frac{9}{16},0)}\int_{B_{\frac{3}{4}}}\left(|u(x,t)|^{2}+|B(x,t)|^{2}\right)\,dx
+2\int_{Q_{\frac{3}{4}}}\left(|\nabla u|^{2}+|\nabla B|^{2}\right)\,dxds \nonumber\\
\leq& C\int_{Q_{1}}\left(|u|^{3}+|B|^{3}+|P|^{\frac{3}{2}}\right)\,dxds
+C\left(\int_{Q_{1}}\left(|u|^{3}+|B|^{3}\right)\,dxds\right)^{\frac{1}{3}}
+C\int_{Q_{1}}\left(|f|^{m}+|g|^{m}\right)\,dxds.
\end{align*}
\end{lemma}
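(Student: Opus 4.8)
The plan is to derive the local energy inequality of Lemma \ref{lem2-5} from the generalized energy inequality \eqref{equ1-2} by choosing a suitable cutoff function $\phi$ and then estimating each term on the right-hand side using H\"older's inequality together with the interpolation structure of the norms. First I would fix a standard cutoff $\phi = \chi^2 \psi$, where $\chi \in C_0^\infty(B_1)$ satisfies $\chi \equiv 1$ on $B_{3/4}$ with $0 \le \chi \le 1$ and $|\nabla \chi|, |\Delta \chi| \le C$, and $\psi = \psi(t)$ is a temporal cutoff localizing to times $t \in (-9/16, 0)$. Strictly speaking $\phi$ must be compactly supported inside $\Omega \times (-T,0)$; since $(u,B,P)$ is a suitable weak solution in $Q_1 = B_1 \times (-1,0)$, I would either work with a $\phi$ compactly supported in the parabolic interior and then pass to the limit by a standard approximation/truncation argument, or absorb the resulting boundary contributions into the $L^3$ and $P$-terms. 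Plugging this $\phi$ into \eqref{equ1-2} produces the desired left-hand side (with the dissipation terms $\int |\nabla u|^2 + |\nabla B|^2$ controlled from below on $Q_{3/4}$) plus several error terms that must be dominated by the right-hand side of the claimed inequality.

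Next I would estimate each of the four groups of terms on the right of \eqref{equ1-2}. The term $\int (|u|^2 + |B|^2)(\partial_s \phi + \Delta \phi)$ is controlled by $C \int_{Q_1} (|u|^2 + |B|^2)$, which by H\"older's inequality (on the finite-measure domain $Q_1$) is bounded by $C(\int_{Q_1}(|u|^3 + |B|^3))^{2/3}$, and hence by $C + C\int_{Q_1}(|u|^3+|B|^3)$ after Young's inequality, accounting for the $\frac13$-power term in the statement. The convective term $\int (u \cdot \nabla \phi)(|u|^2 + |B|^2 + 2P)$ and the magnetic coupling term $\int (B\cdot\nabla\phi)(u\cdot B)$ are the genuinely cubic terms: here I expand $|u|^2 \cdot |u| = |u|^3$, and use H\"older to bound the mixed products $|u|\,|B|^2$, $|B|^2\,|u|$, $|u\cdot B|\,|B|$ all by $C(|u|^3 + |B|^3)$ pointwise via Young's inequality, giving a contribution $\le C\int_{Q_1}(|u|^3 + |B|^3)$. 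The pressure contribution $\int (u\cdot\nabla\phi)\,2P$ is estimated by H\"older with exponents $(3, \tfrac32)$ as $C\int_{Q_1} |u|\,|P| \le C\int_{Q_1}(|u|^3 + |P|^{3/2})$, matching the $|P|^{3/2}$ term.

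The forcing term $2\int \phi(u\cdot f + B\cdot g)$ is where the hypothesis $m > \tfrac52$ enters, and I expect this to be the main obstacle, since one must show the forcing can be absorbed using only the stated $L^m$-norm and cubic norms rather than any higher integrability of $(u,B)$. I would bound $\int_{Q_1}(|u|+|B|)(|f|+|g|)$ by H\"older's inequality with exponents $3$ and $\tfrac32$ in the spatial-temporal variables, giving $\le C(\int_{Q_1}(|u|^3+|B|^3))^{1/3}(\int_{Q_1}(|f|^{3/2}+|g|^{3/2}))^{2/3}$; since $m > \tfrac52 > \tfrac32$ and $Q_1$ has finite measure, a further H\"older step upgrades $\|f\|_{L^{3/2}}, \|g\|_{L^{3/2}}$ to $\|f\|_{L^m}, \|g\|_{L^m}$ up to a constant, and Young's inequality then splits this into $C(\int_{Q_1}(|u|^3+|B|^3))$ plus $C\int_{Q_1}(|f|^m + |g|^m)$, as asserted. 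Throughout, the crucial structural point is that the magnetic coupling terms $-(B\cdot\nabla)B$ and $-(B\cdot\nabla)u$ in \eqref{equ1-1} produce exactly the cubic expressions $(u\cdot\nabla\phi)|B|^2$ and $(B\cdot\nabla\phi)(u\cdot B)$ already present in \eqref{equ1-2}, so that $B$ contributes no term of worse scaling than $u$; I would verify this compatibility carefully, as it is the mechanism by which the Navier-Stokes energy estimate extends to the MHD system without loss.
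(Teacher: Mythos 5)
Your overall skeleton matches the paper's proof: insert a cutoff $\phi$ with $\phi\equiv1$ on $Q_{\frac34}$, $0\le\phi\le1$, $|\nabla\phi|\le C$, $|\partial_s\phi+\Delta\phi|\le C$ into \eqref{equ1-2}, bound the cubic, mixed and pressure terms pointwise by $C\left(|u|^3+|B|^3+|P|^{\frac32}\right)$, and absorb the quadratic and forcing terms by H\"older and Young; your second paragraph is correct. The gap is in the two absorption steps. For the quadratic term you claim
$\left(\int_{Q_1}(|u|^3+|B|^3)\right)^{\frac23}\le C+C\int_{Q_1}(|u|^3+|B|^3)$.
This introduces a free additive constant, but the right-hand side of the Lemma has none: every term vanishes with the data, and the $\frac13$-power term is present precisely to make a constant-free absorption possible. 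The correct elementary step, which the paper uses, is $X^{\frac23}\le X^{\frac13}+X$ (split the cases $X\le1$ and $X\ge1$), with $X=\int_{Q_1}(|u|^3+|B|^3)$. Your bound does not ``account for'' the $\frac13$-power term; it bypasses it and proves a strictly weaker statement. This is not cosmetic: in Lemma \ref{lem4-1} the sup-energy from Lemma \ref{lem2-5} is raised to the power $\frac12$ and multiplied by $E_1/r^3$, so an additive constant would produce a term $CE_1/r^3$ with no accompanying smallness factor, and in Lemma \ref{lem4-2} the coefficient of $E_1^{\frac13}$ would then contain a pure $C/r_0^{\beta}$ contribution carrying no power of $\delta_2$, which cannot be made $\le 1$ as the paper's choice of constants requires.

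The second step is actually false as stated, not merely weaker. For the forcing term you arrive at a product $A\cdot D$ with $A\simeq\left(\int_{Q_1}(|u|^3+|B|^3)\right)^{\frac13}$ and $D\simeq\|f\|_{L^m(Q_1)}+\|g\|_{L^m(Q_1)}$, and assert that Young's inequality splits it into $CA^3+CD^m$. Young requires conjugate exponents, and $\frac13+\frac1m<1$ for $m>\frac32$; indeed $AD\le C(A^3+D^m)$ fails for $A=D=\epsilon\to0$. The correct splitting uses the conjugate pair $\left(\frac{m}{m-1},m\right)$:
\begin{align*}
AD\le CA^{\frac{m}{m-1}}+CD^{m},\qquad
A^{\frac{m}{m-1}}\le A+A^{3}\quad\text{since }1<\tfrac{m}{m-1}<3 \text{ for } m>\tfrac32,
\end{align*}
which lands exactly on the three terms of the statement. (The paper performs the equivalent computation pointwise: $|u||f|\le|u|^{\frac{m}{m-1}}+|f|^m$, then $\int_{Q_1}|u|^{\frac{m}{m-1}}\le C\left(\int_{Q_1}|u|^3\right)^{\frac{m}{3(m-1)}}$ by H\"older, and finally $X^{\theta}\le X^{\frac13}+X$ for $\theta=\frac{m}{3(m-1)}\in\left(\frac13,1\right)$.) With these two corrections your argument coincides with the paper's; as written, neither step yields the stated constant-free inequality.
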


\begin{proof}
By selecting a proper test function $\phi(x,t)\in C_{0}^{\infty}(Q_{1})$ in \eqref{equ1-2} with the following properties:
\begin{align*}
\phi= 1 \quad{\rm in}~Q_{\frac{3}{4}}
\quad{\rm and}\quad 
0\leq \phi\leq 1,~|\nabla\phi|\leq C, ~|\partial_{s}\phi+\Delta\phi|\leq C\quad {\rm in}~Q_1,
\end{align*}
we obtain the desired inequality through applications of the Young inequality and the H\"{o}lder inequality.
The detailed computations are omitted here.
\end{proof}
\section{Regularity in the Linear Scale}\label{sec3}

In this section, we are devoted to proving Theorem \ref{the1-1}
 by analyzing the scaling properties of the linearized MHD equations.
Let us first recall the definitions of $F_{m,r}\left[f,g\right]$ and $C_{r}\left[u,B,P \right]$ from \eqref{F} and \eqref{Cr}:
\begin{align}\label{F'}
F_{m,r}\left[f,g\right]:=r^{2}\sqrt[m]{\dashint_{Q_{r}}|f|^{m}\,dxdt}
+r^{2}\sqrt[m]{\dashint_{Q_{r}}|g|^{m}\,dxdt},\quad m >\frac{5}{2},
\end{align}
and
\begin{align}\label{Cr'}
C_{r}\left[u,B,P\right]:=\sqrt[3]{\dashint_{Q_{r}}|u|^{3}\,dxdt}
+\sqrt[3]{\dashint_{Q_{r}}|B|^{3}\,dxdt}
+r\sqrt[\frac{3}{2}]{\dashint_{Q_{r}}|P|^{\frac{3}{2}}\,dxdt}.
\end{align}
If $(u,B)$ is regular near the origin, there exist constant vectors $\hat{V}$ and $\hat{M}$
such that $C_{r}\left[u-\hat{V},B-\hat{M},0\right]$ $\rightarrow0$ as $r\rightarrow0$. 
Based on this observation, our goal is to select two families of vector sequences $\{V_k\}_{k\ge1}$ and  $\{M_k\}_{k\ge1}$ such that
within the $r^k$-scaled domain, the quantity $C_{r^k}\left[u-V_{k},B-M_{k},0\right]$ can be controlled by $r^{k\alpha}$ with some order $\alpha>0$.
Following the strategy for the Navier-Stokes counterpart in \cite[Section 4]{Wang2025}, we also achieve our objective via an iterative procedure.
The central step is established by Lemma \ref{lem3-1} and Proposition \ref{pro3-1}.

We initialize the iteration by defining the scaled solutions:
\begin{align}\label{scaled}
u_{1}(x,t)=u(rx,r^{2}t)-V_{1},\quad
B_{1}(x,t)=B(rx,r^{2}t)-M_{1},\quad
P_{1}(x,t)=rP(rx,r^{2}t)-rh_{1}(r^{2}t),
\end{align}
with corresponding scaled external forces:
\begin{align}\label{scaled'}
f_{1}(x,t)=r^{2}f(rx,r^{2}t),\quad
g_{1}(x,t)=r^{2}g(rx,r^{2}t),
\end{align} 
where $V_{1}$ and $M_{1}$ are constant vectors to be specified later and $h_{1}(t)$ is a time-dependent function. 
%satisfying $|V_{1}|\leq 1$ and $|M_{1}|\leq 1$. 
For our iterative scheme, we consider the following equations:
\begin{align}\label{equ3-1}
\left\{
  \begin{array}{ll}
    \partial_{t}u_{1}-\Delta u_{1}+(rV_{1}+ru_{1})\cdot\nabla u_{1}-(rM_{1}+rB_{1})\cdot\nabla B_{1}+\nabla P_{1}=f_{1}   &\quad {\rm in}~ Q_{1},\\
    \partial_{t}B_{1}-\Delta B_{1}+(rV_{1}+ru_{1})\cdot\nabla B_{1}-(rM_{1}+rB_{1})\cdot\nabla u_{1}=g_{1}                &\quad {\rm in}~ Q_{1},\\
    {\rm div}\,u_{1}=0,\quad {\rm div}\,B_{1}=0                                                                               &\quad {\rm in}~ Q_{1}. 
     \end{array}
\right.
\end{align}
Note that the original equations \eqref{equ1-1} for $(u,B,P)$ can be recovered from \eqref{equ3-1} by setting $r=1$ and $V_1 = M_1 = 0$.

Regarding the force terms, we introduce the functional $F_{m,r}\left[f,g\right]$ defined in \eqref{F'}, 
which possesses the estimate
 \begin{align*}
F_{m,r}\left[f,g\right]\leq 2r^{\frac{2m-5}{m}}F_{m,1}\left[f,g\right],
\end{align*} 
as shown in Lemma \ref{lem3-2} below. From this expression, the condition $m>\frac{5}{2}$ clearly guarantees that 
$F_{m,r}$ admits well-behaved decay, and this feature can address the effects arising from the external force term 
in subsequent iterations.

%originally developed in [Lin] and recently studied in [Wang] from a more geometric approach.

\begin{lemma}\label{lem3-2}
Let $m>\frac{5}{2}$ and $\lambda_0:=\frac{1}{8^{\frac{m}{2m-5}}}$. Then for any $f,g\in L^m(Q_1)$, 
\begin{align}\label{equ3-23}
F_{m,\lambda_0}\left[f,g\right]\leq \frac{1}{4}F_{m,1}\left[f,g\right].
\end{align}
Moreover, for all $\lambda\in(0,\frac{1}{8^{\frac{m}{2m-5}}}]$ and $i\in \Bbb{N}^+$,
\begin{align}\label{equ3-23'}
F_{m,\lambda^i}\left[f,g\right]\leq \frac{1}{4^i} F_{m,1}\left[f,g\right].
\end{align}
\end{lemma}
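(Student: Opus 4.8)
The plan is to prove the scaling estimate \eqref{equ3-23} directly from the definition \eqref{F'} and then bootstrap it to the iterated bound \eqref{equ3-23'} by a straightforward induction. The core observation is that $F_{m,r}$ is a scaled quantity, so shrinking $r$ produces an explicit power of $r$ in front of $F_{m,1}$; the value $\lambda_0$ is chosen precisely so that this power equals $\frac{1}{4}$.

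First I would establish the clean scaling inequality stated in the text just before the lemma, namely
\begin{align*}
F_{m,r}\left[f,g\right]\leq 2r^{\frac{2m-5}{m}}F_{m,1}\left[f,g\right],\qquad 0<r\le 1.
\end{align*}
To see this, start from the first term of \eqref{F'}. Since $Q_r\subset Q_1$ and $|Q_r|=r^5|Q_1|$, the monotonicity of the integral gives
\begin{align*}
r^{2}\Big(\dashint_{Q_{r}}|f|^{m}\,dxdt\Big)^{1/m}
=r^{2}\Big(\frac{1}{|Q_r|}\int_{Q_r}|f|^m\,dxdt\Big)^{1/m}
\leq r^{2}\Big(\frac{1}{r^5|Q_1|}\int_{Q_1}|f|^m\,dxdt\Big)^{1/m},
\end{align*}
and pulling out the factor $r^{-5/m}$ yields $r^{2-\frac5m}\big(\dashint_{Q_1}|f|^m\big)^{1/m}=r^{\frac{2m-5}{m}}\cdot r^2\big(\dashint_{Q_1}|f|^m\big)^{1/m}$. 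The analogous bound holds for the $g$-term, and adding the two gives exactly the displayed scaling inequality with the factor $2$ absorbing nothing more than the sum of the two identical-looking contributions (in fact each term scales with the same power, so the constant $2$ is even a safe overestimate). Setting $r=\lambda_0=8^{-\frac{m}{2m-5}}$ makes $r^{\frac{2m-5}{m}}=\frac18$, whence $F_{m,\lambda_0}\le 2\cdot\frac18\,F_{m,1}=\frac14 F_{m,1}$, which is \eqref{equ3-23}.

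For the iterated estimate \eqref{equ3-23'} I would argue by induction on $i$, using the semigroup-type structure of the scaling: replacing $(f,g)$ by their $\lambda$-rescalings and noting that $F_{m,\lambda^{i}}[f,g]$ can be read as $F_{m,\lambda}$ applied after an $i-1$ fold contraction. Concretely, the base case $i=1$ is \eqref{equ3-23} (valid for every $\lambda\in(0,\lambda_0]$ since the exponent $\frac{2m-5}{m}>0$ makes $r\mapsto r^{\frac{2m-5}{m}}$ increasing, so $\lambda\le\lambda_0$ gives $2\lambda^{\frac{2m-5}{m}}\le\frac14$). For the inductive step, apply the scaling inequality with $r=\lambda^i$ directly: $F_{m,\lambda^i}\le 2(\lambda^i)^{\frac{2m-5}{m}}F_{m,1}=2\big(\lambda^{\frac{2m-5}{m}}\big)^{i}F_{m,1}\le 2\cdot(\tfrac18)^i F_{m,1}\le\frac{1}{4^i}F_{m,1}$, where the last step uses $2\cdot8^{-i}\le 4^{-i}$ for all $i\ge1$. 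This gives \eqref{equ3-23'} in one stroke without even needing the composition argument. The only point requiring a little care, and the nearest thing to an obstacle, is verifying the monotonicity direction of the exponent map and the bookkeeping of the constant $2$ versus the sharp factor; since $m>\frac52$ forces $\frac{2m-5}{m}>0$, the map $r\mapsto r^{\frac{2m-5}{m}}$ is indeed increasing on $(0,1]$, so every $\lambda\le\lambda_0$ inherits the contraction, and the proof is complete.
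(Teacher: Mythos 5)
Your proof is correct and takes essentially the same route as the paper: both derive the scaling inequality $F_{m,r}\left[f,g\right]\le 2r^{\frac{2m-5}{m}}F_{m,1}\left[f,g\right]$ from the inclusion $Q_r\subset Q_1$ together with the volume ratio $|Q_r|=r^{5}|Q_1|$, choose $\lambda_0=8^{-\frac{m}{2m-5}}$ so that this factor becomes $\frac14$, and obtain \eqref{equ3-23'} by evaluating the same inequality at $r=\lambda^{i}$ (using $\lambda^{\frac{2m-5}{m}}\le\lambda_0^{\frac{2m-5}{m}}=\frac18$ and $2\cdot 8^{-i}\le 4^{-i}$ for $i\ge 1$), exactly as the paper does. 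The only blemish is a typo in your intermediate display, where $r^{2-\frac{5}{m}}\big(\dashint_{Q_1}|f|^{m}\,dxdt\big)^{1/m}$ is equated with $r^{\frac{2m-5}{m}}\cdot r^{2}\big(\dashint_{Q_1}|f|^{m}\,dxdt\big)^{1/m}$; the spurious factor $r^{2}$ should be $1$, since the corresponding term of $F_{m,1}$ carries weight $1^{2}$, and your stated conclusion is unaffected.
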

\begin{proof}
First, we observe that for any $r\in(0,1)$,
\begin{align*}
r^{2m}\dashint_{Q_{r}}|f|^{m}\,dxdt =\frac{r^{2m-5}}{|B_{1}|}\int_{Q_{r}}|f|^{m}\,dxdt 
\leq r^{2m-5}\dashint_{Q_{1}}|f|^{m}\,dxdt.
\end{align*}
This tells us that for any $r\in(0,1)$,
\begin{align*}
r^{2}\sqrt[m]{\dashint_{Q_{r}}|f|^{m}\,dxdt}
\leq r^{\frac{2m-5}{m}}\sqrt[m]{\dashint_{Q_{1}}|f|^{m}\,dxdt}.
\end{align*}
Similarly, the external force $g$ possesses the same characteristics:
\begin{align*}
r^{2}\sqrt[m]{\dashint_{Q_{r}}|g|^{m}\,dxdt}
\leq r^{\frac{2m-5}{m}}\sqrt[m]{\dashint_{Q_{1}}|g|^{m}\,dxdt},\quad \forall~r\in(0,1).
\end{align*}
From the above two inequalities, it is obvious that
\begin{align*}
F_{m,r}\left[f,g\right]\leq 2r^{\frac{2m-5}{m}}F_{m,1}\left[f,g\right].
\end{align*}
Since $m>\frac{5}{2}$, we may choose $\lambda_0=\frac{1}{8^{\frac{m}{2m-5}}}$  to ensure \eqref{equ3-23}.

To prove \eqref{equ3-23'}, note that for $\lambda\in(0,\lambda_0]$ and $i\in \Bbb{N}^+$,
\begin{align*}
\lambda^{2mi}\dashint_{Q_{\lambda^{i}}}|f|^{m}\,dxdt =\frac{\lambda^{2mi-5i}}{|B_{1}|}\int_{Q_{\lambda^{i}}}|f|^{m}\,dxdt 
\leq \lambda^{2mi-5i}\dashint_{Q_{1}}|f|^{m}\,dxdt,
\end{align*}
which implies
\begin{align*}
\lambda^{2i}\sqrt[m]{\dashint_{Q_{\lambda^{i}}}|f|^{m}\,dxdt}
\leq \lambda^{\frac{2mi-5i}{m}}\sqrt[m]{\dashint_{Q_{1}}|f|^{m}\,dxdt}, \quad \forall~\lambda\in(0,\lambda_0],~ i\in \Bbb{N}^+.
\end{align*}
An analogous estimate holds for the external force $g$:
\begin{align*}
\lambda^{2i}\sqrt[m]{\dashint_{Q_{\lambda^{i}}}|g|^{m}\,dxdt}
\leq \lambda^{\frac{2mi-5i}{m}}\sqrt[m]{\dashint_{Q_{1}}|g|^{m}\,dxdt},\quad \forall~\lambda\in(0,\lambda_0],~ i\in \Bbb{N}^+.
\end{align*}
Combining these two estimates gives
\begin{align*}
F_{m,\lambda^{i}}\left[f,g\right]
\leq 2\lambda^{\frac{2mi-5i}{m}}F_{m,1}\left[f,g\right]
\leq \frac{2}{8^i}F_{m,1}\left[f,g\right]
\leq \frac{1}{4^i}F_{m,1}\left[f,g\right], \quad \forall~ i\in \Bbb{N}^+.
\end{align*}
The proof is finished.
\end{proof}

Now, we utilize the following Lemma (Lemma \ref{lem3-1}) to control the perturbation between $(u-V_1,B-M_1)$ 
and its approximate solution $(v_1,b_1)$, where $v_1$ and $b_1$ solve the corresponding linearized equations.

\begin{lemma} \label{lem3-1}
For any $ \epsilon>0$, we can find a small positive constant $\delta_{0}=\delta_{0}(\epsilon)$ with the property that if 
 $(u_{1},B_{1},P_{1})$ is a suitable weak solution of \eqref{equ3-1} in $Q_{1}$ with some $r_1\le1$,
some vectors $|V_1|\le1$, $|M_1|\le1$, and satisfies
\begin{align*}
C_{1}\left[u_{1}, B_{1}, P_{1}\right]+\sqrt{F_{m,1}\left[f_{1}, g_{1}\right]}\leq \delta_{0},
\end{align*}
then there exist $r\le1$, vectors $|V|\le1, |M|\le1$ and solutions $(v_{1},b_{1},q_{1})$ to the linearized MHD system as well as $H_{1}(x,t)$ of the harmonic flow as
\begin{align}\label{equ3-4}
\left\{
  \begin{array}{ll}
    \partial_{t}v_{1}-\Delta v_{1}+rV\cdot\nabla v_{1}-rM\cdot\nabla b_{1}+\nabla q_{1}=0    &\quad {\rm in}~ Q_{\frac{1}{2}},\\
    \partial_{t}b_{1}-\Delta b_{1}+rV\cdot\nabla b_{1}-rM\cdot\nabla v_{1}=0                 &\quad {\rm in}~ Q_{\frac{1}{2}},\\
    \Delta H_{1}=0                                                                           &\quad {\rm in}~ Q_{\frac{1}{2}},\\
    {\rm div}\,v_{1}=0,\quad {\rm div}\,b_{1}=0                                                  &\quad {\rm in}~ Q_{\frac{1}{2}},
     \end{array}
\right.
\end{align}
such that
\begin{align*}
C_{\frac{1}{2}}\left[v_{1},b_{1},q_{1}\right]\leq& 2^{3}\left(C_{1}\left[u_{1},B_{1},P_{1}\right]+\sqrt{F_{m,1}\left[f_{1},g_{1}\right]}\right), \\
C_{\frac{1}{2}}\left[0,0,H_{1}\right]\leq& 2^{3}\left(C_{1}\left[u_{1},B_{1},P_{1}\right]+\sqrt{F_{m,1}\left[f_{1},g_{1}\right]}\right),
\end{align*}
and
\begin{align*}
C_{\frac{1}{2}}\left[u_{1}-v_{1},B_{1}-b_{1},P_{1}-H_{1}\right]
\leq \epsilon \left(C_{1}\left[u_{1},B_{1},P_{1}\right]+\sqrt{F_{m,1}\left[f_{1},g_{1}\right]}\right).
\end{align*}
\end{lemma}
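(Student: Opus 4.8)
The plan is to argue by contradiction, following the blow-up/compactness strategy of \cite{Wang2025,Lin1998}. Suppose the conclusion fails for some fixed $\epsilon>0$; taking $\delta_0=1/k$ produces a sequence of suitable weak solutions $(u^k,B^k,P^k)$ of \eqref{equ3-1} with parameters $r_k\le1$, $|V_1^k|\le1$, $|M_1^k|\le1$ and forces $(f^k,g^k)$ for which
\[
\lambda_k:=C_1[u^k,B^k,P^k]+\sqrt{F_{m,1}[f^k,g^k]}\le \tfrac1k\to0,
\]
yet the approximation estimate is violated for the natural comparison functions. The first step is to normalize: set $\hat u^k=u^k/\lambda_k$, $\hat B^k=B^k/\lambda_k$, $\hat P^k=P^k/\lambda_k$, so that $C_1[\hat u^k,\hat B^k,\hat P^k]\le1$. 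The decisive point is the role of the square root in the force functional. After dividing the momentum and induction equations by $\lambda_k$, the forcing appearing in the normalized system is $f^k/\lambda_k$ (resp. $g^k/\lambda_k$), whose $L^m(Q_1)$ norm is comparable to $F_{m,1}[f^k,g^k]/\lambda_k=(\sqrt{F_{m,1}[f^k,g^k]})^2/\lambda_k\le\lambda_k\to0$. Thus, although $\sqrt{F_{m,1}[f^k,g^k]}$ is only of the order of $\lambda_k$, the genuine forcing in the normalized equations is quadratically small and disappears in the limit. Likewise the honestly nonlinear terms, e.g. $r_k\lambda_k\,\hat u^k\cdot\nabla\hat u^k={\rm div}(r_k\lambda_k\,\hat u^k\otimes\hat u^k)$, carry a factor $\lambda_k$ and vanish, whereas the linear transport terms $r_kV_1^k\cdot\nabla\hat u^k$ survive.

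Next I would establish uniform bounds and extract limits. Applying the local energy inequality of Lemma \ref{lem2-5} to the normalized solutions controls $\hat u^k,\hat B^k$ uniformly in $L^\infty((-\tfrac{9}{16},0);L^2(B_{3/4}))\cap L^2((-\tfrac{9}{16},0);H^1(B_{3/4}))$, while $\hat P^k$ stays bounded in $L^{3/2}(Q_{3/4})$ and, along a subsequence, $r_k\to r$, $V_1^k\to V$, $M_1^k\to M$ with $r\le1$, $|V|\le1$, $|M|\le1$. Combining the spatial $H^1$ bound with the bound on $\partial_t\hat u^k,\partial_t\hat B^k$ read off from the equations (these lie in a negative-order space uniformly) and invoking the Aubin--Lions lemma, I obtain strong convergence $\hat u^k\to\hat u$, $\hat B^k\to\hat B$ in $L^3(Q_{1/2})$ together with $\hat P^k\wto\hat P$ in $L^{3/2}$. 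Passing to the limit—using the strong $L^3$ convergence to kill the $\lambda_k$-nonlinearities and the decay of the forcing above—shows that $(\hat u,\hat B,\hat P)$ solves the homogeneous linearized system in \eqref{equ3-4} with coefficients $rV,rM$. I then set $(v_1,b_1,q_1):=(\hat u,\hat B,\hat P)$ and take $H_1$ to be the harmonic representative of the limiting pressure; the two quantitative bounds $C_{1/2}[v_1,b_1,q_1]\le2^3(\cdots)$ and $C_{1/2}[0,0,H_1]\le2^3(\cdots)$ then follow from lower semicontinuity together with the linear and monotonicity estimates of Lemmas \ref{lem2-3}--\ref{lem2-4}, the constant $2^3$ coming from the volume ratio $|Q_1|/|Q_{1/2}|$.

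The main obstacle is the pressure, since $C_{1/2}$ contains the pressure term but $\hat P^k$ a priori converges only weakly in $L^{3/2}$. To handle it I would take the divergence of the momentum equation: because $f^k$ may be taken divergence free (as arranged in the introduction), the forcing drops out and one is left with $\Delta\hat P^k={\rm div}^2\big(r_k\lambda_k(-\hat u^k\otimes\hat u^k+\hat B^k\otimes\hat B^k)\big)$, whose right-hand side tends to $0$ in $L^{3/2}$. Splitting $\hat P^k$ into a Newtonian-potential part controlled by $\lambda_k$ (hence strongly small) plus a harmonic part, and applying the inhomogeneous monotonicity estimate of Lemma \ref{lem2-3}(b) to localize, I obtain that $\hat P^k-H_1^k$ is small in $L^{3/2}(Q_{1/2})$ while the harmonic parts converge strongly by interior estimates, so in the limit $\hat P$ is harmonic and $H_1=\hat P$. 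Collecting all contributions, the full normalized error $C_{1/2}[\hat u^k-v_1,\hat B^k-b_1,\hat P^k-H_1]$ tends to $0$, contradicting the standing lower bound $>\epsilon$ for $k$ large and proving the lemma; unwinding the normalization restores the stated dependence on $C_1[u_1,B_1,P_1]+\sqrt{F_{m,1}[f_1,g_1]}$.
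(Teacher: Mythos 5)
Your proposal follows the paper's strategy almost verbatim up to the last step: contradiction and normalization by $\lambda_k=C_{1}[u^k,B^k,P^k]+\sqrt{F_{m,1}[f^k,g^k]}$, the key observation that the normalized forcing $f^k/\lambda_k$ has $L^m$ norm of order $F_{m,1}[f^k,g^k]/\lambda_k\le\lambda_k\to0$ (this is exactly how the paper exploits the square root), the uniform energy bound plus Aubin--Lions giving strong $L^3$ convergence of the velocity and magnetic parts, and the splitting of the pressure into a Newtonian-potential part (strongly small, by Calder\'on--Zygmund) plus a spatially harmonic part. The genuine gap is your claim that ``the harmonic parts converge strongly by interior estimates,'' which you need because you fix $H_1$ to be the \emph{limit} pressure $\hat P$ and then require $\|\hat P^k-\hat P\|_{L^{3/2}(Q_{1/2})}\to0$. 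Interior estimates for harmonic functions give compactness only in the spatial variable; the harmonic parts are controlled merely in $L^{3/2}$ with respect to time, and nothing in the equations provides equicontinuity of the pressure in $t$ (the pressure of a suitable weak solution has no time regularity). For example, $h_k(x,t)=\phi_k(t)\,x_1$ with $\phi_k$ bounded and oscillating in $L^{3/2}(-\tfrac{9}{16},0)$ is harmonic in $x$, bounded in $L^{3/2}(Q_{3/4})$, and converges weakly but not strongly; such behavior is compatible with every uniform bound you have established. Hence the error $C_{1/2}[\hat u^k-v_1,\hat B^k-b_1,\hat P^k-H_1]$ with a $k$-independent $H_1$ need not tend to zero, and the contradiction does not close.

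The repair is precisely the device used in the paper, and it is why the lemma is stated with two separate pressure objects $q_1$ and $H_1$: the error is measured against $H_1$, which is only required to be harmonic and to satisfy the bound $C_{1/2}[0,0,H_1]\le 2^3(\cdots)$, and it may depend on the solution (i.e., on the index $k$) and differ from the pressure $q_1$ of the linearized system. So take $H_1$ to be the harmonic part of $\hat P^k$ itself at one sufficiently large index, not its limit; then the pressure error in $C_{1/2}$ is exactly the Newtonian-potential part, which you have already proved goes to zero strongly, while $q_1$ is taken as the weak limit $\hat P$ (lower semicontinuity suffices for its bound, since it never enters the error term). One must additionally check $C_{1/2}[0,0,H_1]\le 2^3(\cdots)$, which follows from the triangle inequality, the $L^{3/2}$ bound on $\hat P^k$, and the smallness of the Newtonian part, as in the paper. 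Two small further remarks: the constant $2^3$ does not come from the volume ratio $|Q_1|/|Q_{1/2}|=2^5$ itself but from the way that ratio enters the averaged, root-scaled quantities in $C_{1/2}$ (the relevant factors are $2^{5/3}$ and $2^{7/3}$, both $\le 2^3$); and with the above change of $H_1$ the rest of your argument is the paper's proof.
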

\begin{proof}
We prove this lemma by contradiction arguments. Suppose it were not true, there would exist an $\epsilon_{0}>0$ such that 
for any $n\in \Bbb{N}^+$, one could find $u_{1n}$, $B_{1n}$, $P_{1n}$, $f_{1n}$, $g_{1n}$ satisfying
 \eqref{equ3-1} with $r_{1n}\le1$, $|V_{1n}|\le1,|M_{1n}|\le1$ and 
\begin{align}\label{equ3-5'}
C_{1}\left[u_{1n},B_{1n},P_{1n}\right]+\sqrt{F_{m,1}\left[f_{1n},g_{1n}\right]}\leq \frac{1}{n},
\end{align}
but for any $r\le1$ and $|V|\le1,|M|\le1$, the corresponding solutions $(v_{1},b_{1},q_{1},H_{1})$ of \eqref{equ3-4} with the properties
\begin{align}
C_{\frac{1}{2}}\left[v_{1},b_{1},q_{1}\right]&\leq 2^{3}\left(C_{1}\left[u_{1n},B_{1n},P_{1n}\right]+\sqrt{F_{m,1}\left[f_{1n},g_{1n}\right]}\right),\label{add1}\\
C_{\frac{1}{2}}\left[0,0,H_{1}\right]&\leq 2^{3}\left(C_{1}[u_{1n},B_{1n},P_{1n}]+\sqrt{F_{m,1}\left[f_{1n},g_{1n}\right]}\right)\label{add2},
\end{align}
would satisfy
\begin{align*}
C_{\frac{1}{2}}\left[u_{1n}-v_{1},B_{1n}-b_{1},P_{1n}-H_{1}\right]> \epsilon_{0} \left(C_{1}\left[u_{1n},B_{1n},P_{1n}\right]+\sqrt{F_{m,1}\left[f_{1n},g_{1n}\right]}\right).
\end{align*}

Let us first abbreviate 
$$C_{1n}:=C_{1}\left[u_{1n},B_{1n},P_{1n}\right],\quad F_{m,1n}:=F_{m,1}\left[f_{1n},g_{1n}\right],$$ 
and denote
\begin{align*}
\tilde{u}_{1n}=\frac{u_{1n}}{C_{1n}+\sqrt{F_{m,1n}}},\qquad\tilde{B}_{1n}=\frac{B_{1n}}{C_{1n}+\sqrt{F_{m,1n}} },
\qquad\tilde{P}_{1n}=\frac{P_{1n}}{C_{1n}+\sqrt{F_{m,1n}}}
\end{align*}
as well as
\begin{align*}
\tilde{f}_{1n}=\frac{f_{1n}}{C_{1n}+\sqrt{F_{m,1n}}},\qquad \tilde{g}_{1n}=\frac{g_{1n}}{C_{1n}+\sqrt{F_{m,1n}}}.
\end{align*}
Then, the definitions of $\tilde{u}_{1n}$, $\tilde{B}_{1n}$, $\tilde{P}_{1n}$, $\tilde{f}_{1n}$ and $\tilde{g}_{1n}$ imply that
\begin{align}\label{danweihua}
 C_{1}\left[\tilde{u}_{1n},\tilde{B}_{1n},\tilde{P}_{1n}\right]+\sqrt{F_{m,1}\left[\tilde{f}_{1n},\tilde{g}_{1n}\right]}\le 1
\end{align}
and
\begin{align}\label{danweihua'}
F_{m,1}\left[\tilde{f}_{1n},\tilde{g}_{1n}\right]\le \frac{1}{n}.
\end{align}
By some calculations, we find that $(\tilde{u}_{1n},\tilde{B}_{1n},\tilde{P}_{1n})$ satisfies
\begin{align}\label{equ3-6}
\left\{
  \begin{array}{ll}
    \partial_{t}\tilde{u}_{1n}-\Delta \tilde{u}_{1n}
    +\left(r_{1n}V_{1n}+r_{1n}\left(C_{1n}+\sqrt{F_{m,1n}}\right)\tilde{u}_{1n}\right)\cdot\nabla \tilde{u}_{1n}\\[2mm]
    \quad\quad\quad\quad\quad\quad\,
    -\left(r_{1n}M_{1n}+r_{1n}\left(C_{1n}+\sqrt{F_{m,1n}}\right)\tilde{B}_{1n}\right)\cdot\nabla \tilde{B}_{1n}+\nabla\tilde{P}_{1n}=\tilde{f}_{1n} &\quad {\rm in}~Q_{1},\\[2mm]
    \partial_{t}\tilde{B}_{1n}-\Delta \tilde{B}_{1n}
    +\left(r_{1n}V_{1n}+r_{1n}\left(C_{1n}+\sqrt{F_{m,1n}}\right)\tilde{u}_{1n}\right)\cdot\nabla \tilde{B}_{1n}\\[2mm]
    \quad\quad\quad\quad\quad\quad\,\,\,
    -\left(r_{1n}M_{1n}+r_{1n}\left(C_{1n}+\sqrt{F_{m,1n}}\right)\tilde{B}_{1n}\right)\cdot\nabla \tilde{u}_{1n}=\tilde{g}_{1n}                      &\quad  {\rm in}~Q_{1},\\[2mm]
    {\rm div}\,\tilde{u}_{1n}=0,\quad {\rm div}\,\tilde{B}_{1n}=0                     
     &\quad {\rm in}~ Q_{1},
     \end{array}
\right.
\end{align}
and 
\begin{align}\label{contradiction}
C_{\frac{1}{2}}\left[\tilde{u}_{1n}-\frac{v_{1}}{C_{1n}+\sqrt{F_{m,1n}}},
\tilde{B}_{1n}-\frac{b_{1}}{C_{1n}+\sqrt{F_{m,1n}}},
\tilde{P}_{1n}-\frac{H_{1}}{C_{1n}+\sqrt{F_{m,1n}}}\right]
> \epsilon_{0}
\end{align}
for any solutions $(v_{1},b_{1},q_{1},H_{1})$ of \eqref{equ3-4} with $r\le1$ and $|V|\le1,|M|\le1$ admitting properties \eqref{add1} and \eqref{add2}.

By selecting a proper testing function for \eqref{equ3-6}
and utilizing the H\"{o}lder inequality as well as the Young inequality, one can readily arrive at the following uniform-in-$n$ estimate
\begin{align}\label{energy}
\sup_{t\in(-\frac{9}{16},0)}\int_{B_{\frac{3}{4}}}\left(|\tilde{u}_{1n}(x,t)|^{2}+|\tilde{B}_{1n}(x,t)|^{2}\right)dx
+\int_{Q_{\frac{3}{4}}}\left(|\nabla \tilde{u}_{1n}|^{2}+|\nabla \tilde{B}_{1n}|^{2}\right)dxds
\leq C.
\end{align}
The detailed proof for \eqref{energy} is omitted. Thus, it holds that
\begin{align}\label{equ3-9}
\|(\tilde{u}_{1n}, \tilde{B}_{1n})\|_{L^{\infty}\left((-9/16,0);L^{2}(B_{3/4})\right)
\cap L^{2}\left((-9/16,0);L^{6}(B_{3/4})\right)}\le C.
\end{align}
Based on this, we invoke the Lebesgue interpolation (see e.g., in \cite[Theorem 1.5]{RRS2016})  
to state that 
\begin{align}\label{equ3-10}
\{\tilde{u}_{1n}\}, \{\tilde{B}_{1n}\}\text{ are bounded in } L^{\frac{10}{3}}(Q_{\frac{3}{4}}).
\end{align}

%$$\|\tilde{u}_{1n}, \tilde{B}_{1n}\|_{L^{p}(Q_{\frac{3}{4}})}\le XXX\times XXX\le C.$$

Let $\varphi(x)\in C^{\infty}_{0}(B_{\frac{3}{4}})$ be a spatial variable function with ${\rm div}\,\varphi=0$.
Multiplying \eqref{equ3-6}$_1$ by $\varphi$ and integrating over $B_{\frac{3}{4}}$, one can see for a.e. $t>0$
\begin{align}\label{equ3-11}
\left\langle \partial_{t}\tilde{u}_{1n},\varphi \right\rangle_{W^{-1,3}(B_{\frac{3}{4}})\times W^{1,3/2}_{0}(B_{\frac{3}{4}}) }
=&-\left\langle \nabla\tilde{u}_{1n},\nabla\varphi \right\rangle
-\left\langle r_{1n}V_{1n}\cdot\nabla\tilde{u}_{1n},\varphi \right\rangle \nonumber\\
&-\left\langle r_{1n}\left(C_{1n}+\sqrt{F_{m,1n}}\right)\tilde{u}_{1n}\cdot\nabla\tilde{u}_{1n},\varphi\right \rangle
+\left\langle r_{1n}M_{1n}\cdot\nabla\tilde{B}_{1n},\varphi\right\rangle \nonumber\\
&+\left\langle r_{1n}\left(C_{1n}+\sqrt{F_{m,1n}}\right)\tilde{B}_{1n}\cdot\nabla\tilde{B}_{1n},\varphi \right\rangle
-\left\langle \nabla\tilde{P}_{1n},\varphi \right\rangle
+\left\langle \tilde{f}_{1n},\varphi \right\rangle. 
\end{align}
In the above display, the term $\left\langle \nabla\tilde{P}_{1n},\varphi \right\rangle$ vanishes because $\varphi$ is divergence free.
It follows from the H\"{o}lder inequality and the Sobolev embedding theorem that for a.e. $t\in(-\frac{9}{16},0)$,
\begin{align*}
\left|\left\langle \nabla\tilde{u}_{1n},\nabla\varphi \right\rangle\right|
\leq& \left\|\nabla\tilde{u}_{1n}\right\|_{L^2(B_{\frac{3}{4}})}\left\|\nabla\varphi\right\|_{L^2(B_{\frac{3}{4}})},\\
\left|\left\langle r_{1n}V_{1n}\cdot\nabla\tilde{u}_{1n},\varphi \right\rangle\right|
\leq& \left\|\nabla\tilde{u}_{1n}\right\|_{L^2(B_{\frac{3}{4}})}\left\|\varphi\right\|_{L^2(B_{\frac{3}{4}})},\\
\left|\left\langle r_{1n}\left(C_{1n}+\sqrt{F_{m,1n}}\right)\tilde{u}_{1n}\cdot\nabla\tilde{u}_{1n},\varphi \right\rangle\right|
\leq& \left\|\tilde{u}_{1n}\right\|_{L^{3}(B_{\frac{3}{4}})}\left\|\nabla\tilde{u}_{1n}\right\|_{L^{2}(B_{\frac{3}{4}})}\left\|\varphi\right\|_{L^6(B_{\frac{3}{4}})}\\
\leq& \left\|\tilde{u}_{1n}\right\|_{L^{3}(B_{\frac{3}{4}})}\left\|\nabla\tilde{u}_{1n}\right\|_{L^{2}(B_{\frac{3}{4}})}\left\|\varphi\right\|_{H^1(B_{\frac{3}{4}})}.
\end{align*}
By the similar reasonings, we also have that for a.e. $t\in(-\frac{9}{16},0)$,
\begin{align*}
\left|\left\langle r_{1n}M_{1n}\cdot\nabla\tilde{B}_{1n},\varphi \right\rangle\right|
\leq& \left\|\nabla\tilde{B}_{1n}\right\|_{L^2(B_{\frac{3}{4}})}\left\|\varphi\right\|_{L^2(B_{\frac{3}{4}})},\\
\left|\left\langle r_{1n}\left(C_{1n}+\sqrt{F_{m,1n}}\right)\tilde{B}_{1n}\cdot\nabla\tilde{B}_{1n},\varphi \right\rangle\right|
\leq& \left\|\tilde{B}_{1n}\right\|_{L^{3}(B_{\frac{3}{4}})}\left\|\nabla\tilde{B}_{1n}\right\|_{L^{2}(B_{\frac{3}{4}})}\left\|\varphi\right\|_{H^1(B_{\frac{3}{4}})},\\
\left|\left\langle\tilde{f}_{1n},\varphi \right\rangle\right|
\leq&\left\|\tilde{f}_{1n}\right\|_{L^{m}(B_{\frac{3}{4}})}\left\|\varphi\right\|_{L^{\frac{m}{m-1}}(B_{\frac{3}{4}})}.
\end{align*}
Inserting the above estimates into \eqref{equ3-11}, 
we then employ the H\"{o}lder inequality to deduce 
\begin{align*}
&\int_{-\frac{9}{16}}^{0}\left\|\partial_{t}\tilde{u}_{1n}\right\|^{\frac{3}{2}}_{W^{-1,3}(B_{\frac{3}{4}})}\,dt\\
\leq& C\int_{-\frac{9}{16}}^{0}\left(\left\|\nabla\tilde{u}_{1n}\right\|^{\frac{3}{2}}_{L^{2}(B_{\frac{3}{4}})}
+\left\|\nabla\tilde{B}_{1n}\right\|^{\frac{3}{2}}_{L^{2}(B_{\frac{3}{4}})}
+\left\|\tilde{f}_{1n}\right\|^{\frac{3}{2}}_{L^{m}(B_{\frac{3}{4}})}\right)\,dt\\
\leq& C\left(\int_{-\frac{9}{16}}^{0}\left\|\nabla\tilde{u}_{1n}\right\|^{2}_{L^{2}(B_{\frac{3}{4}})}\,dt\right)^{\frac{3}{4}}
+C\left(\int_{-\frac{9}{16}}^{0}\left\|\nabla\tilde{B}_{1n}\right\|^{2}_{L^{2}(B_{\frac{3}{4}})}\,dt\right)^{\frac{3}{4}}
+C\left(\int_{-\frac{9}{16}}^{0}\left\|\tilde{f}_{1n}\right\|^{m}_{L^{m}(B_{\frac{3}{4}})}\,dt\right)^{\frac{3}{2m}},
\end{align*}
which combined with \eqref{energy} indicates that 
\begin{align*}
\int_{-\frac{9}{16}}^{0}\left\|\partial_{t}\tilde{u}_{1n}\right\|^{\frac{3}{2}}_{W^{-1,3}(B_{\frac{3}{4}})}\,dt\le C.
\end{align*}
This along with \eqref{energy} and \eqref{equ3-10} allows us to apply the Aubin-Lions lemma and 
derive that 
\begin{align}\label{equ3-13}
\tilde{u}_{1n}\rightarrow \tilde{u}_{1\infty}\quad{\rm in}~ L^{3}(Q_{\frac{3}{4}}).
\end{align}
Performing a similar argument for $\{\tilde{B}_{1n}\}$, we conclude that
\begin{align}\label{equ3-14}
\tilde{B}_{1n}\rightarrow \tilde{B}_{1\infty}\quad{\rm in}~ L^{3}(Q_{\frac{3}{4}}).
\end{align}

Since $\tilde{P}_{1n}$ satisfies the following equation
\begin{align*}
\Delta\tilde{P}_{1n}=-r_{1n}\left(C_{1n}+\sqrt{F_{m,1n}}\right)\sum_{i,j=1}^{3}\frac{\partial^{2}}{\partial x_{i}\partial x_{j}}
\left(\tilde{u}^{i}_{1n}\tilde{u}^{j}_{1n}-\tilde{B}^{i}_{1n}\tilde{B}^{j}_{1n}\right).
\end{align*}
We consider the Dirichlet boundary value problem
\begin{align*}
\left\{
  \begin{array}{ll}
  \Delta \tilde{R}_{1n}=-r_{1n}\left(C_{1n}+\sqrt{F_{m,1n}}\right)\sum_{i,j=1}^{3}\frac{\partial^{2}}{\partial x_{i}\partial x_{j}}
\left(\tilde{u}^{i}_{1n}\tilde{u}^{j}_{1n}-\tilde{B}^{i}_{1n}\tilde{B}^{j}_{1n}\right)  &\quad {\rm in}~ Q_{\frac{3}{4}},\\
\,\,\,\,\, \tilde{R}_{1n}=0                               &\quad {\rm in}~ \partial B_{\frac{3}{4}}\times (-\frac{9}{16},0),
     \end{array}
\right.
\end{align*}
and invoke the Calder\'{o}n-Zygmund estimates to infer that for a.e. $t\in(-\frac{9}{16},0)$,
\begin{align*}
\int_{B_{\frac{3}{4}}}|\tilde{R}_{1n}|^{\frac{3}{2}}\,dx
\leq Cr_{1n}\left(C_{1n}+\sqrt{F_{m,1n}}\right)\int_{B_{\frac{3}{4}}}\left(|\tilde{u}_{1n}|^{3}+|\tilde{B}_{1n}|^{3}\right)\,dx.
\end{align*}
Integrating over $(-\frac{9}{16},0)$ w.r.t. $t$ and combining with \eqref{equ3-5'} and \eqref{equ3-10}, we conclude
\begin{align}\label{equ3-14'}
\tilde{R}_{1n}\rightarrow 0\quad{\rm in}~ L^{\frac{3}{2}}(Q_{\frac{3}{4}}).
\end{align}
We can choose a sufficiently large $\bar{n} \in \Bbb{N}$ for which $Cr_{1\bar{n}}\left(C_{1\bar{n}}+\sqrt{F_{m,1\bar{n}}}\right)\leq 1$.
Thus, there holds that 
\begin{align}\label{equ3-14''}
\dashint_{Q_{\frac{1}{2}}}|\tilde{R}_{1\bar{n}}|^{\frac{3}{2}}\,dx
\leq \dashint_{Q_{\frac{1}{2}}}\left(|\tilde{u}_{1\bar{n}}|^{3}+|\tilde{B}_{1\bar{n}}|^{3}\right)\,dx,
\end{align}
whenever $n>\bar{n}$.

Moreover, from \eqref{danweihua} and \eqref{danweihua'}, it follows that
\begin{align*}
\tilde{P}_{1n}\rightharpoonup \tilde{P}_{1\infty}\quad {\rm in}~ L^{\frac{3}{2}}(Q_{\frac{3}{4}}),\qquad
\tilde{f}_{1n}\rightarrow 0,\quad 
\tilde{g}_{1n}\rightarrow 0 \quad{\rm in}~ L^{m}(Q_{\frac{3}{4}}).
\end{align*}
Due to $r_{1n}\le1$, $|V_{1n}|\le1,|M_{1n}|\le1$, we have
\begin{align*}
r_{1n}\rightarrow r_{1\infty}\quad{\rm in}~ \mathbb{R}^1~~~
{\rm and}~~~
V_{1n}\rightarrow V_{1\infty},\quad 
M_{1n}\rightarrow M_{1\infty} \quad{\rm in}~ \mathbb{R}^3.
\end{align*}
Hence, $(\tilde{u}_{1\infty},\tilde{B}_{1\infty}, \tilde{P}_{1\infty})$ solves the following equations in the weak sense:
\begin{align}\label{equ3-8''}
\left\{
  \begin{array}{ll}
    \partial_{t}\tilde{u}_{1\infty}-\Delta \tilde{u}_{1\infty}
    +r_{1\infty}V_{1\infty}\cdot\nabla \tilde{u}_{1\infty}-r_{1\infty}M_{1\infty}\cdot\nabla \tilde{B}_{1\infty}
    +\nabla \tilde{P}_{1\infty}=0                                                               &\quad {\rm in}~ Q_{\frac{3}{4}},\\
    \partial_{t}\tilde{B}_{1\infty}-\Delta \tilde{B}_{1\infty}
    +r_{1\infty}V_{1\infty}\cdot\nabla \tilde{B}_{1\infty}-r_{1\infty}M_{1\infty}\cdot\nabla \tilde{u}_{1\infty}=0  &\quad {\rm in}~ Q_{\frac{3}{4}},\\
    {\rm div}\,\tilde{u}_{1\infty}=0,\quad {\rm div}\,\tilde{B}_{1\infty}=0                         &\quad {\rm in}~ Q_{\frac{3}{4}}.
     \end{array}
\right.
\end{align}

Then, one can tell
\begin{align*}
    &C_{\frac{1}{2}}\left[\tilde{u}_{1n}-\tilde{u}_{1\infty}, \tilde{B}_{1n}-\tilde{B}_{1\infty}, \tilde{P}_{1n}-(\tilde{P}_{1n}-\tilde{R}_{1n})\right]\\
    =&\sqrt[3]{\frac{1}{|Q_{\frac{1}{2}}|}\int_{Q_{\frac{1}{2}}}|\tilde{u}_{1n}-\tilde{u}_{1\infty}|^{3}}
    +\sqrt[3]{\frac{1}{|Q_{\frac{1}{2}}|}\int_{Q_{\frac{1}{2}}}|\tilde{B}_{1n}-\tilde{B}_{1\infty}|^{3}}
    +\frac{1}{2}\sqrt[\frac{3}{2}]{\frac{1}{|Q_{\frac{1}{2}}|}\int_{Q_{\frac{1}{2}}}|\tilde{R}_{1n}|^{\frac{3}{2}}}\\
     \leq&2
     \left(\sqrt[3]{\frac{1}{|Q_{\frac{3}{4}}|}\int_{Q_{\frac{3}{4}}}|\tilde{u}_{1n}-\tilde{u}_{1\infty}|^{3}}
    +\sqrt[3]{\frac{1}{|Q_{\frac{3}{4}}|}\int_{Q_{\frac{3}{4}}}|\tilde{B}_{1n}-\tilde{B}_{1\infty}|^{3}}
    +\sqrt[\frac{3}{2}]{\frac{1}{|Q_{\frac{3}{4}}|}\int_{Q_{\frac{3}{4}}}|\tilde{R}_{1n}|^{\frac{3}{2}}}\right),
\end{align*}
where the right-hand side converges to 0 as $n\rightarrow0$ because of \eqref{equ3-13}--\eqref{equ3-14'}.
Thus, there exists $\hat{n}\in \Bbb{N}^+$ $(\hat{n}>\bar{n})$ sufficiently large such that 
\begin{align}\label{contradiction'}
    C_{\frac{1}{2}}\left[\tilde{u}_{1\hat{n}}-\tilde{u}_{1\infty}, 
    \tilde{B}_{1\hat{n}}-\tilde{B}_{1\infty}, 
    \tilde{P}_{1\hat{n}}-\left(\tilde{P}_{1\hat{n}}-\tilde{R}_{1\hat{n}}\right)\right]
    \le \frac{\epsilon_0}{2}.
\end{align}
Now, we define 
\begin{align*}
   v_{1\infty}&=\left(C_{1\hat{n}}+\sqrt{F_{m,1\hat{n}}}\right)\tilde{u}_{1\infty},  ~~\, b_{1\infty}=\left(C_{1\hat{n}}+\sqrt{F_{m,1\hat{n}}}\right)\tilde{B}_{1\infty}, \\
   q_{1\infty}&=\left(C_{1\hat{n}}+\sqrt{F_{m,1\hat{n}}}\right)\tilde{P}_{1\infty}, ~~
   H_{1\hat{n}}=\left(C_{1\hat{n}}+\sqrt{F_{m,1\hat{n}}}\right)\left(\tilde{P}_{1\hat{n}}-\tilde{R}_{1\hat{n}}\right).
\end{align*}
Substituting $v_{1\infty}$, $b_{1\infty}$ and $H_{1\hat{n}}$ into \eqref{contradiction'}, we have
\begin{align}\label{contradiction''}
    &C_{\frac{1}{2}}\left[\tilde{u}_{1\hat{n}}-\frac{ v_{1\infty}}{      C_{1\hat{n}}+\sqrt{F_{m,1\hat{n}}}    },
     \tilde{B}_{1\hat{n}}-\frac{b_{1\infty}}{  C_{1\hat{n}}+\sqrt{F_{m,1\hat{n}}}  }, 
     \tilde{P}_{1\hat{n}}-\frac{H_{1\hat{n}}}{   C_{1\hat{n}}+\sqrt{F_{m,1\hat{n}}} }\right]
     \le \frac{\epsilon_0}{2}.
\end{align}
We also deduce that $(v_{1\infty},b_{1\infty}, q_{1\infty})$ satisfies 
\begin{align*}
C_{\frac{1}{2}}\left[v_{1\infty},b_{1\infty}, q_{1\infty}\right]
&\le \left(C_{1\hat{n}}+\sqrt{F_{m,1\hat{n}}}\right)\,
C_{\frac{1}{2}}\left[\tilde{u}_{1\infty},\tilde{B}_{1\infty},\tilde{P}_{1\infty}\right]\\
&\le 2^3 \left(C_{1\hat{n}}+\sqrt{F_{m,1\hat{n}}}\right)\,C_{1}\left[\tilde{u}_{1\infty},\tilde{B}_{1\infty},\tilde{P}_{1\infty}\right].
\end{align*}
By combining the lower semicontinuity of the functional $C_{1}[\cdot,\cdot,\cdot]$ and \eqref{danweihua}, it holds
\begin{align*}
C_{1}\left[\tilde{u}_{1\infty},\tilde{B}_{1\infty},\tilde{P}_{1\infty}\right]
\leq\liminf_{n\rightarrow\infty} C_{1}\left[\tilde{u}_{1n},\tilde{B}_{1n},\tilde{P}_{1n}\right]
\le 1.
\end{align*}
Thus,
\begin{align}\label{add1'}
C_{\frac{1}{2}}\left[v_{1\infty},b_{1\infty}, q_{1\infty}\right]
\leq2^3 \left(C_{1\hat{n}}+\sqrt{F_{m,1\hat{n}}}\right).
\end{align}
Besides, from the definition of $H_{1\hat{n}}$ and \eqref{equ3-14''}, we obtain
\begin{align*}
C_{\frac{1}{2}}\left[0,0, H_{1\hat{n}}\right]
\leq&\frac{\left(C_{1\hat{n}}+\sqrt{F_{m,1\hat{n}}}\right)}{2}\sqrt[\frac{3}{2}]
{\dashint_{Q_{\frac{1}{2}}}\left(|\tilde{u}_{1\hat{n}}|^{3}+|\tilde{B}_{1\hat{n}}|^{3}\right)\,dxdt}
+\frac{\left(C_{1\hat{n}}+\sqrt{F_{m,1\hat{n}}}\right)}{2}
\sqrt[\frac{3}{2}]{\dashint_{Q_{\frac{1}{2}}}|\tilde{P}_{1\hat{n}}|^{\frac{3}{2}}\,dxdt}\\
\le& 2^3 \left(C_{1\hat{n}}+\sqrt{F_{m,1\hat{n}}}\right)
\left(\sqrt[\frac{3}{2}]{{\dashint_{Q_{1}}\left(|\tilde{u}_{1\hat{n}}|^{3}+|\tilde{B}_{1\hat{n}}|^{3}\right)\,dxdt}}
+\sqrt[\frac{3}{2}]{\dashint_{Q_{1}}|\tilde{P}_{1\hat{n}}|^{\frac{3}{2}}\,dxdt}\right).
\end{align*}
In view of \eqref{danweihua}, this simplifies to 
\begin{align}\label{add2'}
C_{\frac{1}{2}}\left[0,0, H_{1\hat{n}}\right]
\le 2^3 \left(C_{1\hat{n}}+\sqrt{F_{m,1\hat{n}}}\right).
\end{align}
Therefore, we have constructed a solution $(v_{1},b_{1},q_{1},H_{1})=(v_{1\infty},b_{1\infty},q_{1\infty},H_{1\hat{n}})$ 
to \eqref{equ3-4} with $r=r_{1\infty}$ and $V=V_{1\infty}$, $M=M_{1\infty}$,
satisfying the uniform bounds $r_{1\infty}\le1$, $|V_{1\infty}|\le1,|M_{1\infty}|\le1$,
together with the key estimates \eqref{contradiction''}--\eqref{add2'}.
However, inequality \eqref{contradiction''} directly contradicts \eqref{contradiction}.
This completes the proof.
\end{proof}

%\begin{lemma}
%for any $\psi$ satisfies the conditions required on $v_1$ in lemma 3.1, then there is a universal constant $\lambda'$ such that
%\begin{align}\label{equ3-15}
%|\psi(x,t)-\psi(0,0)|^{3}\leq \lambda' r\delta^{3},
%\end{align}
%\end{lemma}

\begin{proposition}\label{pro3-1}
Let $\lambda_0\in(0,1)$ be as in Lemma \ref{lem3-2}.
There are universal constants $\lambda_1\in(0,\lambda_0]$, $\delta_1\in(0,1)$ and $\Lambda>0$ such that 
if $(u_{1},B_{1},P_{1})$ is a suitable weak solution of \eqref{equ3-1} 
in $Q_{1}$ with some $r_1\le1$ and $|V_1|\le1,|M_1|\le1$ satisfying   
\begin{align*}
C_{1}\left[u_{1},B_{1},P_{1}\right]+\sqrt{F_{m,1}\left[f_{1},g_{1}\right]}\leq \delta_{1},
\end{align*}
then we can find constant vectors $\overline{V}_{2}$, $\overline{M}_{2}\in \mathbb{R}^3$ and a function of time $\overline{h}_{2}(t)$ such that
\begin{align*}
|\overline{V}_{2}|\leq& \frac{\Lambda}{2}C_{1}\left[u_{1},B_{1},P_{1}\right]+\frac{\Lambda}{2}\sqrt{F_{m,1}\left[f_{1},g_{1}\right]},\\
|\overline{M}_{2}|\leq& \frac{\Lambda}{2}C_{1}\left[u_{1},B_{1},P_{1}\right]+\frac{\Lambda}{2}\sqrt{F_{m,1}\left[f_{1},g_{1}\right]} 
%C_{r}[0,0,\overline{h}_{2}(t)]\leq&\frac{\Lambda}{2}C_{1}[u_{1},B_{1},P_{1}]+\frac{\Lambda}{2}\sqrt{F_{m,1}[f_{1},g_{1}]}
\end{align*}
and 
\begin{align*}
C_{\lambda_1}\left[u_{1}-\overline{V}_{2},B_{1}-\overline{M}_{2},P_{1}-\overline{h}_{2}(t)\right]
\leq \frac{1}{2}C_{1}\left[u_{1},B_{1},P_{1}\right]+\frac{1}{2}\sqrt{F_{m,1}\left[f_{1},g_{1}\right]}.
\end{align*}
\end{proposition}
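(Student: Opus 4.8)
The plan is to carry out one step of the Lin--Wang linearization/iteration scheme: approximate $(u_1,B_1,P_1)$ on $Q_{1/2}$ by the linear-plus-harmonic profile supplied by Lemma \ref{lem3-1}, read off the constants $\overline V_2,\overline M_2$ and the time function $\overline h_2(t)$ from the smooth approximation, and transfer the decay of the approximation back to $(u_1,B_1,P_1)$ through the triangle inequality for $C_r[\cdot]$, fixing $\lambda_1$ and $\delta_1$ in the correct order at the very end.

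First I would apply Lemma \ref{lem3-1} with a threshold $\epsilon>0$ to be determined, so that $\delta_1$ is ultimately taken to be $\delta_0(\epsilon)$. Writing $N:=C_1[u_1,B_1,P_1]+\sqrt{F_{m,1}[f_1,g_1]}$ for brevity, this yields a solution $(v_1,b_1,q_1)$ of the linearized system and a spatially harmonic $H_1$ obeying
\begin{align*}
C_{\frac12}[v_1,b_1,q_1]\le 2^3N,\qquad C_{\frac12}[0,0,H_1]\le 2^3N,\qquad C_{\frac12}[u_1-v_1,B_1-b_1,P_1-H_1]\le \epsilon N.
\end{align*}
The last bound is the only place the full nonlinear/forced problem enters; from here on everything is driven by the smooth linear data.

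Next I would exploit interior regularity of the approximation. Because \eqref{equ3-4} is a constant-coefficient linear parabolic system (the drifts $rV,rM$ are fixed vectors of length at most one) and $H_1(\cdot,t)$ is harmonic in space, standard interior parabolic estimates together with the monotonicity of harmonic functions bound the parabolic Lipschitz norms of $v_1,b_1$ and the spatial oscillation of $H_1$ on $Q_{1/4}$ by $C_{1/2}[v_1,b_1,q_1]$ and $C_{1/2}[0,0,H_1]$, hence by $N$. I then set
\begin{align*}
\overline V_2:=v_1(0,0),\qquad \overline M_2:=b_1(0,0),\qquad \overline h_2(t):=\dashint_{B_{\lambda_1}}H_1(x,t)\,dx.
\end{align*}
The interior $L^\infty$ bound gives $|\overline V_2|,|\overline M_2|\le CN$, which furnishes the asserted size estimates once $\Lambda$ is chosen as twice this universal $C$. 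A first-order parabolic Taylor expansion for $v_1-\overline V_2$ and $b_1-\overline M_2$, and the Poincar\'e inequality combined with the harmonic interior gradient bound for $H_1-\overline h_2(t)$, then yield a one-step decay
\begin{align*}
C_{\lambda_1}[v_1-\overline V_2,\,b_1-\overline M_2,\,H_1-\overline h_2]\le C_\star\,\lambda_1^{\theta}\,N
\end{align*}
for some universal $\theta\in(0,1]$ and $C_\star$ (the velocity and magnetic parts contribute order $\lambda_1$, the harmonic pressure a smaller positive power).

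Finally I would combine the two estimates. By subadditivity of $C_r[\cdot]$,
\begin{align*}
C_{\lambda_1}[u_1-\overline V_2,B_1-\overline M_2,P_1-\overline h_2]\le C_{\lambda_1}[u_1-v_1,B_1-b_1,P_1-H_1]+C_{\lambda_1}[v_1-\overline V_2,b_1-\overline M_2,H_1-\overline h_2].
\end{align*}
Since $\lambda_1\le\tfrac12$, enlarging the integration domain from $Q_{\lambda_1}$ to $Q_{1/2}$ in the first term costs a factor $(|Q_{1/2}|/|Q_{\lambda_1}|)^{1/3}\simeq\lambda_1^{-5/3}$ on the cubic terms and, after the extra factor $\lambda_1$ in the pressure piece, $\lambda_1^{-7/3}$ overall, so that term is at most $C\lambda_1^{-7/3}\epsilon N$. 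Therefore
\begin{align*}
C_{\lambda_1}[u_1-\overline V_2,B_1-\overline M_2,P_1-\overline h_2]\le\big(C\lambda_1^{-7/3}\epsilon+C_\star\lambda_1^{\theta}\big)N,
\end{align*}
and the proof closes by fixing constants in order: choose $\lambda_1\in(0,\lambda_0]$ so small that $C_\star\lambda_1^{\theta}\le\tfrac14$, and then $\epsilon$ (hence $\delta_1\le\delta_0(\epsilon)$) so small that $C\lambda_1^{-7/3}\epsilon\le\tfrac14$, giving the claimed prefactor $\tfrac12$. The main obstacle is the middle step: proving the linear decay for the coupled system and, in particular, controlling the harmonic pressure after subtracting only the time-dependent average $\overline h_2(t)$; here one must absorb the magnetic coupling terms $rM\cdot\nabla b_1$ and $rM\cdot\nabla v_1$ into the parabolic regularity of the constant-coefficient system, which is precisely where one checks that the magnetic field does not obstruct the Navier--Stokes scheme. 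The scaling blow-up $\lambda_1^{-7/3}$ is otherwise routine bookkeeping, dictating only the order ``fix $\lambda_1$ first, then $\epsilon$.''
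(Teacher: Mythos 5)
Your proposal is correct and follows essentially the same route as the paper's own proof: invoke Lemma \ref{lem3-1}, set $\overline V_2=v_1(0,0)$, $\overline M_2=b_1(0,0)$ and take $\overline h_2(t)$ from the harmonic pressure, use interior regularity of the linearized system plus harmonic gradient estimates for the one-step decay, then close via the triangle inequality with the $\lambda_1^{-5/3}$ and $\lambda_1^{-7/3}$ blow-up factors, fixing $\lambda_1$ before $\epsilon$. The only cosmetic deviation is your claim of parabolic Lipschitz bounds for $v_1,b_1$ (the paper only has smoothness in space and $C^{1/3}$ in time, since the pressure limits time regularity), but this is harmless because your argument only needs decay at some positive power $\theta$.
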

\begin{proof}
Given $\epsilon\in(0,1)$ to be determined later, we accordingly take $\delta_0=\delta_{0}(\epsilon)>0$ by applying Lemma \ref{lem3-1}.
Setting $\delta:=C_{1}\left[u_{1},B_{1},P_{1}\right]+\sqrt{F_{m,1}\left[f_{1},g_{1}\right]}$ and assuming $\delta\le\delta_0 $,
we define $v_{1},b_{1}, q_{1}, H_{1}$ as in Lemma \ref{lem3-1} with 
$C_{\frac{1}{2}}\left[v_{1},b_{1},q_{1}\right]\leq 2^{3}\delta$, $C_{\frac{1}{2}}\left[0,0,H_{1}\right]\leq 2^{3}\delta$
and 
\begin{align}\label{equ3-55'}
C_{\frac{1}{2}}\left[u_{1}-v_{1},B_{1}-b_{1},P_{1}-H_{1}\right]
\leq \epsilon \delta.
\end{align}
We know that $v_{1}, b_{1}$ are locally smooth in space and $C^{\frac{1}{3}}$ in time variable. Consequently, 
for each $(x,t)\in Q_{r}$ with $0<r\leq\frac{1}{2}$, we get
\begin{align}\label{equ3-15}
|v_{1}(x,t)-v_{1}(0,0)|^{3}\leq Cr\delta^{3},\qquad
|b_{1}(x,t)-b_{1}(0,0)|^{3}\leq Cr\delta^{3}
\end{align}
and 
\begin{align}\label{equ3-15'}
|v_{1}(0,0)|\leq C\delta,\qquad
|b_{1}(0,0)|\leq C\delta.
\end{align}
Additionally, applying the gradient estimates to the harmonic function $H_{1}$, we have that for each $(x,t)\in Q_{r}$ $(0<r\leq\frac{1}{2})$,
\begin{align}\label{equ3-16}
|H_{1}(x,t)-H_{1}(0,t)|^{\frac{3}{2}}\leq Cr^{\frac{3}{2}}\left\|H_{1}(\cdot,t)\right\|^{\frac{3}{2}}_{L^{\frac{3}{2}}(B_{\frac{1}{2}})}.
\end{align}

Now, we choose 
\begin{align}\label{equ3-17}
\overline{V}_{2}=v_{1}(0,0),\quad\overline{M}_{2}=b_{1}(0,0)\quad{\rm and}\quad \overline{h}_{2}(t)=H_{1}(0,t).
\end{align}
Upon integrating the inequalities in \eqref{equ3-15} over $Q_{r}$, one can obtain that
\begin{align}\label{equ3-17'}
\dashint_{Q_{r}}|v_{1}-\overline{V}_{2}|^{3}\,dxdt\leq Cr\delta^{3},\quad 
\dashint_{Q_{r}}|b_{1}-\overline{M}_{2}|^{3}\,dxdt\leq Cr\delta^{3}.
\end{align}
By integrating \eqref{equ3-16} over $Q_{r}$, together with the fact that $C_{\frac{1}{2}}[0,0,H_{1}]\leq 2^{3}\delta$, we arrive at
\begin{align}\label{equ3-17''}
r^{\frac{3}{2}}\dashint_{Q_{r}}|H_{1}(x,t)-\overline{h}_{2}(t)|^{\frac{3}{2}}\,dxdt
\leq \frac{Cr^{3}}{r^{5}}\int_{-r^{2}}^{0}\left(\int_{B_{r}}\left\|H_{1}\right\|^{\frac{3}{2}}_{L^{\frac{3}{2}}(B_{\frac{1}{2}})}\,dx\right)\,dt
\leq Cr\dashint_{Q_{\frac{1}{2}}}|H_{1}|^{\frac{3}{2}}\,dxdt
\leq Cr\delta^{\frac{3}{2}}.
\end{align}
Combining \eqref{equ3-17'} with \eqref{equ3-55'} gives
\begin{align}\label{equ3-20}
\sqrt[3]{\dashint_{Q_{r}}|u_{1}-\overline{V}_{2}|^{3}\,dxdt}
\leq& \sqrt[3]{\dashint_{Q_{r}}|u_{1}-v_{1}|^{3}\,dxdt}+\sqrt[3]{\dashint_{Q_{r}}|v_{1}-\overline{V}_{2}|^{3}\,dxdt} \nonumber\\
\leq&\frac{1}{(2r)^{\frac{5}{3}}}\sqrt[3]{\dashint_{Q_{\frac{1}{2}}}|u_{1}-v_{1}|^{3}\,dxdt}
+\sqrt[3]{\dashint_{Q_{r}}|v_{1}-\overline{V}_{2}|^{3}\,dxdt}\nonumber\\
\leq&\frac{\epsilon\delta}{(2r)^{\frac{5}{3}}}
+Cr^{\frac{1}{3}}\delta.
\end{align}
Similarly, in light of \eqref{equ3-17'}, \eqref{equ3-17''} and \eqref{equ3-55'}, there holds that
\begin{align}\label{equ3-21}
\sqrt[3]{\dashint_{Q_{r}}|B_{1}-\overline{M}_{2}|^{3}\,dxdt}
\leq\frac{\epsilon \delta}{(2r)^{\frac{5}{3}}}+Cr^{\frac{1}{3}}\delta,
\qquad
 r\sqrt[\frac{3}{2}]{\dashint_{Q_{r}}|P_{1}-\overline{h}_{2}(t)|^{\frac{3}{2}}\,dxdt}
\leq\frac{\epsilon\delta }{(2r)^{\frac{7}{3}}} +Cr^{\frac{2}{3}}\delta.
\end{align}
We select $r_0>0$ sufficiently small to satisfy $2Cr_0^{\frac{1}{3}}+Cr_0^{\frac{2}{3}}\leq \frac{1}{4}$, 
and set $\lambda_1=\min\{r_0,\lambda_0\}$ in the statement of this proposition.
Then we choose $\epsilon_0$ small enough such that $\frac{2\epsilon_0}{(2\lambda_1)^{\frac{5}{3}}}+\frac{\epsilon_0}{(2\lambda_1)^{\frac{7}{3}}}\leq \frac{1}{4}$. 
With these choices of $\lambda_1$ and $\epsilon_0$, we take $\delta_1=\delta_0(\epsilon_0)>0$ as given in Lemma \ref{lem3-1},
 and thereby conclude from \eqref{equ3-20} and \eqref{equ3-21} that
\begin{align*}
C_{\lambda_{1}}\left[u_{1}-\overline{V}_{2},B_{1}-\overline{M}_{2},P_{1}-\overline{h}_{2}(t)\right]
\leq\frac{\delta}{2}.
\end{align*}
Combining this with the definition of $\delta$, together with \eqref{equ3-17} and \eqref{equ3-15'}, we complete the proof.
\end{proof}

\begin{proof}
[\bf Proof of Theorem \ref{the1-1}.]
Assume that $(u,B,P)$ is a suitable weak solution of \eqref{equ1-1}.
We accordingly find a universal constant $\delta_1>0$ from Proposition \ref{pro3-1}.
Then, the smallness condition \eqref{assump1} holds for some $\delta_*\le \delta_1$, and consequently,
\begin{align}\label{add3}
C_{1}\left[u,B,P\right]+\sqrt{F_{m,1}\left[f,g\right]}\leq \delta_{1}.
 \end{align}
Then, the application of Proposition \ref{pro3-1} yields a universal constant $\lambda_1$ such that
\begin{align}\label{equ3-24}
C_{\lambda_1}\left[u-V_{1},B-M_{1},P -h_{1}(t)\right]
\leq \frac{1}{2}C_{1}\left[u,B,P\right]+\frac{1}{2}\sqrt{F_{m,1}\left[f,g\right]},
\end{align}
where the constant vectors $V_1$ and $M_1$ obey
\begin{align}\label{equ3-24''}
|V_{1}|\leq \frac{\Lambda}{2}C_{1}\left[u,B,P\right]+\frac{\Lambda}{2}\sqrt{F_{m,1}\left[f,g\right]},\quad
|M_{1}|\leq \frac{\Lambda}{2}C_{1}\left[u,B,P\right]+\frac{\Lambda}{2}\sqrt{F_{m,1}\left[f,g\right]}.
%C_{r}[0,0,h_{1}(t)]\leq&\frac{1}{2}C_{1}[u,B,P]+\frac{\Lambda}{2}\sqrt{F_{m,1}[f,g]}.
\end{align}
Let the scaling parameter $r=\lambda_1$ in our coming iterative steps. 
From the scaling definitions \eqref{scaled} and \eqref{scaled'} of $u_{1}$, $B_{1}$, $P_{1}$, $f_{1}$, $g_{1}$,
together with \eqref{equ3-24}, \eqref{equ3-23} and \eqref{assump1}, we derive
\begin{align}\label{equ C_1+F_1}
C_{1}\left[u_{1},B_{1},P_{1}\right]+\sqrt{F_{m,1}\left[f_{1},g_{1}\right]}
&=C_{r}\left[u-V_{1},B-M_{1},P -h_{1}(t)\right]+\sqrt{F_{m,r}\left[f,g\right]}\nonumber\\
&\leq\frac{1}{2}C_{1}\left[u,B,P\right]+\sqrt{F_{m,1}\left[f,g\right]}\\
&\leq \delta_{*},\nonumber
\end{align}
which along with \eqref{equ3-24''} entails that 
\begin{align*}
|V_{1}|\leq& \frac{\Lambda\delta_{*}}{2}, \quad |M_{1}|\leq \frac{\Lambda\delta_{*}}{2}.
%C_{r}[0,0,h_{1}(t)]\leq&\frac{1}{2}C_{1}[u,B,P]+\frac{\Lambda}{2}\sqrt{F_{m,1}[f,g]}.
\end{align*}
We further diminish $\delta_*$ smaller than $\frac{1}{3\Lambda}$ if necessary, and thereby obtain the bounds $|V_{1}|\leq1$ as well as $|M_{1}|\leq1$.
Combining this with \eqref{equ C_1+F_1} and \eqref{add3}, we verify that the conditions of Proposition \ref{pro3-1} are satisfied for the scaled 
quantities $(u_1,B_1,P_1)$. As a result, we get
\begin{align}\label{equ3-24'}
C_{r}\left[u_{1}-\overline{V}_{2},B_{1}-\overline{M}_{2},P_{1}-\overline{h}_{2}(t)\right]
\leq \frac{1}{2}C_{1}\left[u_{1},B_{1},P_{1}\right]+\frac{1}{2}\sqrt{F_{m,1}\left[f_{1},g_{1}\right]},
\end{align}
where the constant vectors $\overline{V}_{2}$, $\overline{M}_{2}$ and the function of time $\overline{h}_{2}(t)$ provided by Proposition \ref{pro3-1} satisfy
\begin{align}
|\overline{V}_{2}|\leq& \frac{\Lambda}{2}C_{1}\left[u_{1},B_{1},P_{1}\right]+\frac{\Lambda}{2}\sqrt{F_{m,1}\left[f_{1},g_{1}\right]},\label{{V}_{2}} \\
|\overline{M}_{2}|\leq& \frac{\Lambda}{2}C_{1}\left[u_{1},B_{1},P_{1}\right]+\frac{\Lambda}{2}\sqrt{F_{m,1}\left[f_{1},g_{1}\right]}.\label{{M}_{2}}
\end{align}
Using the definitions of $u_{1}$, $B_{1}$, $P_{1}$, $f_{1}$, $g_{1}$ from \eqref{equ3-24'}, and applying \eqref{equ C_1+F_1}, we derive
\begin{align}\label{equ3-25}
C_{r}\left[u_{1}-\overline{V}_{2},B_{1}-\overline{M}_{2},P_{1}-\overline{h}_{2}(t)\right]
& \leq \frac{1}{2}C_{1}\left[u_{1},B_{1},P_{1}\right]+\frac{1}{2} \sqrt{F_{m,1}\left[f_{1},g_{1}\right]}\nonumber\\
&\leq\frac{1}{2^{2}}C_{1}\left[u,B,P\right]+\frac{1}{2}\sqrt{F_{m,1}\left[f,g\right]}.
\end{align}
Calculations based on \eqref{scaled} imply that
the left-hand side of \eqref{equ3-25} is equivalently expressed as
\begin{align*}
C_{r}\left[u_{1}-\overline{V}_{2},B_{1}-\overline{M}_{2},P_{1}-\overline{h}_{2}(t)\right]
=C_{r^{2}}\left[u-V_{1}-\overline{V}_{2},B-M_{1}-\overline{M}_{2},P-h_{1}(t)-r^{-1}\overline{h}_{2}(t/r^{2})\right].
\end{align*}
With setting
\begin{align*}
V_{2}=V_{1}+\overline{V}_{2},\quad
M_{2}=M_{1}+\overline{M}_{2},\quad
h_{2}(t)=h_{1}(t)+r^{-1}\overline{h}_{2}(t/r^{2}),
\end{align*}
we conclude from \eqref{equ3-25} that 
\begin{align*}
C_{r^{2}}\left[u-V_{2},B-M_{2},P-h_{2}(t)\right]
\leq\frac{1}{2^{2}}C_{1}\left[u,B,P\right]+\frac{1}{2}\sqrt{F_{m,1}\left[f,g\right]}.
\end{align*}
In addition, $\eqref{{V}_{2}}$, $\eqref{{M}_{2}}$ and $\eqref{equ C_1+F_1}$ tell us
\begin{align*}
|{V}_{2}-{V}_{1}|\leq& \frac{\Lambda}{2}C_{1}\left[u_{1},B_{1},P_{1}\right]+\frac{\Lambda}{2}\sqrt{F_{m,1}\left[f_{1},g_{1}\right]}
\leq \frac{\Lambda}{2^{2}}C_{1}\left[u,B,P\right]+\frac{\Lambda}{2}\sqrt{F_{m,1}\left[f,g\right]},\\
|{M}_{2}-{M}_{1}|\leq& \frac{\Lambda}{2}C_{1}\left[u_{1},B_{1},P_{1}\right]+\frac{\Lambda}{2}\sqrt{F_{m,1}\left[f_{1},g_{1}\right]}
\leq \frac{\Lambda}{2^{2}}C_{1}\left[u,B,P\right]+\frac{\Lambda}{2}\sqrt{F_{m,1}\left[f,g\right]}.
\end{align*}
Combining the above inequalities with \eqref{equ3-24''} and \eqref{assump1} yields
\begin{align*}
|{V}_{2}|\leq& |{V}_{2}-{V}_{1}|+|{V}_{1}|\\
\leq& \frac{\Lambda}{2^{2}}C_{1}\left[u,B,P\right]+\frac{\Lambda}{2}\sqrt{F_{m,1}\left[f,g\right]}
+\frac{\Lambda}{2}C_{1}\left[u,B,P\right]+\frac{\Lambda}{2}\sqrt{F_{m,1}\left[f,g\right]}\\
=&\frac{3\Lambda}{4}C_{1}\left[u,B,P\right]+\Lambda\sqrt{F_{m,1}\left[f,g\right]}\\
\leq& \frac{3\Lambda}{4}\delta_{*}+\Lambda\delta_{*}\leq 1,
\end{align*}
where the final inequality follows from $\delta_*\leq\frac{1}{3\Lambda}$. An analogous argument shows $|{M}_{2}|\leq 1$.

After that, we target at the $k$-step iteration. Let us define
\begin{align*}
\left\{
  \begin{array}{ll}
u_{k}(x,t)=u(r^{k}x,r^{2k}t)-V_{k},\\
B_{k}(x,t)=B(r^{k}x,r^{2k}t)-M_{k},\\
P_{k}(x,t)=r^{k}P(r^{k}x,r^{2k}t)-r^{k}h_{k}(r^{2k}t)
     \end{array}
\right.
\end{align*}
and
\begin{align*}
\left\{
  \begin{array}{ll}
f_{k}(x,t)=r^{2k}f(r^{k}x,r^{2k}t),\\
g_{k}(x,t)=r^{2k}g(r^{k}x,r^{2k}t).
     \end{array}
\right.
\end{align*}
Then $(u_{k},B_{k},P_{k})$ is a suitable weak solution to the following equations:
\begin{align*}
\left\{
  \begin{array}{ll}
    \partial_{t}u_{k}-\Delta u_{k}+r^{k}(V_{k}+u_{k})\cdot\nabla u_{k}
    -r^{k}(M_{k}+B_{k})\cdot\nabla B_{k}+\nabla P_{k}=f_{k}               &\quad {\rm in}~Q_{1},\\
    \partial_{t}B_{k}-\Delta B_{k}+r^{k}(V_{k}+u_{k})\cdot\nabla B_{k}
   -r^{k}(M_{k}+B_{k})\cdot\nabla u_{k}=g_{k}                             &\quad {\rm in}~Q_{1},\\
    {\rm div}\,u_{k}=0,\quad {\rm div}\,B_{k}=0                             &\quad {\rm in}~Q_{1}.
     \end{array}
\right.
\end{align*}
Additionally, due to $r\le\lambda_0$, Lemma \ref{lem3-2} provides the decay estimate for the external force terms
\begin{align}\label{equ3-27-0}
\sqrt{F_{m,r^{i}}\left[f, g\right]}
\leq \frac{1}{2^{i}}\sqrt{F_{m,1}\left[f,g\right]},\quad \forall~ i\in \Bbb{N}^+.
\end{align}

Assume that for any $i$-step  with $1\leq i\leq k$, we have established the following iteration scheme:
\begin{align}\label{equ3-27}
\left\{
  \begin{array}{ll}
  C_{r^{i}}\left[u-V_{i}, B-M_{i},P-h_{i}(t)\right]
\leq \frac{1}{2^{i}}C_{1}\left[u,B,P\right]+\frac{i}{2^{i}}\sqrt{F_{m,1}\left[f,g\right]}, \\
|{V}_{i}-{V}_{i-1}|\leq\frac{\Lambda}{2}\Big(C_{1}\left[u_{i-1},B_{i-1},P_{i-1}\right]+\sqrt{F_{m,1}\left[f_{i-1},g_{i-1}\right]}\Big),\\
|{M}_{i}-{M}_{i-1}|\leq\frac{\Lambda}{2}\Big(C_{1}\left[u_{i-1},B_{i-1},P_{i-1}\right]+\sqrt{F_{m,1}\left[f_{i-1},g_{i-1}\right]}\Big).
     \end{array}
\right.
\end{align}
From the definitions of $u_i$, $B_i$, $P_i$, $f_i$ and $g_i$, together with \eqref{equ3-27}$_1$ and \eqref{equ3-27-0},
it follows that for $1\leq i\leq k$,
\begin{align}\label{add5}
C_{1}\left[u_{i},B_{i},P_{i}\right]+\sqrt{F_{m,1}\left[f_{i},g_{i}\right]}
=&C_{r^{i}}\left[u-V_{i}, B-M_{i},P-h_{i}(t)\right]+\sqrt{F_{m,r^{i}}\left[f, g\right]}\nonumber\\
\leq& \frac{1}{2^{i}}C_{1}\left[u,B,P\right]+\frac{i+1}{2^{i}}\sqrt{F_{m,1}\left[f,g\right]}.
\end{align}
We  deduce from \eqref{equ3-27}$_2$ that
\begin{align*}
|{V}_{k}|\leq& \sum^{k}_{i=2} |{V}_{i}-{V}_{i-1}|+|{V}_{1}|\nonumber\\
\leq& \frac{\Lambda}{2}\sum^{k-1}_{i=1}\Big(C_{1}\left[u_{i},B_{i},P_{i}\right]+\sqrt{F_{m,1}\left[f_{i},g_{i}\right]}\Big)
+\frac{\Lambda}{2}\Big(C_{1}\left[u,B,P\right]+\sqrt{F_{m,1}\left[f,g\right]}\Big).
\end{align*}
Thus, substituting \eqref{add5} with $1\le i\le k-1$ into the above inequality and utilizing \eqref{assump1} yields
\begin{align*}
|{V}_{k}|
\leq &\frac{\Lambda}{2}\sum^{k-1}_{i=1}\Big(\frac{1}{2^{i}}C_{1}\left[u,B,P\right]
+\frac{i+1}{2^{i}}\sqrt{F_{m,1}\left[f,g\right]}\Big)
+\frac{\Lambda}{2}\Big(C_{1}\left[u,B,P\right]+\sqrt{F_{m,1}\left[f,g\right]}\Big)\\
= &\Lambda\sum^{k-1}_{i=0}\Big(\frac{1}{2^{i+1}}C_{1}\left[u,B,P\right]
+\frac{i+1}{2^{i+1}}\sqrt{F_{m,1}\left[f,g\right]}\Big)\\
\leq &\Lambda\left(\delta_{*}+2\delta_{*}\right)\leq 1
\end{align*}
in view of the fact $\delta_*\le \frac{1}{3\Lambda}$. 
By similar reasonings, we also have $|{M}_{k}|\leq 1$.
Revisiting \eqref{add5} for $i=k$ and leveraging \eqref{assump1} again, 
we infer
\begin{align*}
C_{1}\left[u_{k}, B_{k},P_{k}\right]+\sqrt{F_{m,1}\left[f_{k}, g_{k}\right]}
\leq \delta_{*}\leq \delta_1,
\end{align*}
which verifies the smallness condition required in Proposition \ref{pro3-1}. 
Hence, we obtain vectors $\overline{V}_{k+1}$, $\overline{M}_{k+1}$ and a time-variable function $\overline{h}_{k+1}(t)$ 
such that  
\begin{align}\label{k+1}
C_{r}\left[u_{k}-\overline{V}_{k+1},B_{k}-\overline{M}_{k+1},P_{k}-\overline{h}_{k+1}(t)\right]
\leq \frac{1}{2}C_{1}\left[u_{k},B_{k},P_{k}\right]+\frac{1}{2}\sqrt{F_{m,1}\left[f_{k},g_{k}\right]},
\end{align}
where the constant vectors $\overline{V}_{k+1}$ and  $\overline{M}_{k+1}$ satisfy
\begin{align}
|\overline{V}_{k+1}|\leq& \frac{\Lambda}{2}C_{1}\left[u_{k},B_{k},P_{k}\right]+\frac{\Lambda}{2}\sqrt{F_{m,1}\left[f_{k},g_{k}\right]},\label{add4'}\\
|\overline{M}_{k+1}|\leq& \frac{\Lambda}{2}C_{1}\left[u_{k},B_{k},P_{k}\right]+\frac{\Lambda}{2}\sqrt{F_{m,1}\left[f_{k},g_{k}\right]}\label{add4}.
\end{align}
By using the definitions of $u_{k}$, $B_{k}$, $P_{k}$, $f_{k}$, $g_{k}$, and utilizing \eqref{k+1} and \eqref{add5}, it yields that
\begin{align}\label{equ3-28}
&C_{r^{k+1}}\left[u-V_{k}-\overline{V}_{k+1},B-M_{k}-\overline{M}_{k+1},P-h_{k}(t)-r^{-k}\overline{h}_{k+1}(t/r^{2k})\right] \nonumber\\
=&C_{r}\left[u_{k}-\overline{V}_{k+1},B_{k}-\overline{M}_{k+1},P_{k}-\overline{h}_{k+1}(t)\right] \nonumber\\
\leq& \frac{1}{2^{k+1}}C_{1}\left[u,B,P\right]+\frac{k+1}{2^{k+1}}\sqrt{F_{m,1}\left[f,g\right]}.
\end{align}
With assigning 
\begin{align*}
V_{k+1}=V_{k}+\overline{V}_{k+1},\quad
M_{k+1}=M_{k}+\overline{M}_{k+1},\quad
h_{k+1}(t)=h_{k}(t)+r^{-k}\overline{h}_{k+1}(t/r^{2k}),
\end{align*}
we immediately infer from \eqref{equ3-28} that
\begin{align*}
C_{r^{k+1}}\left[u-V_{k+1}, B-M_{k+1},P-h_{k+1}(t)\right]
\leq \frac{1}{2^{k+1}}C_{1}\left[u,B,P\right]+\frac{k+1}{2^{k+1}}\sqrt{F_{m,1}\left[f,g\right]},
\end{align*}
and derive from \eqref{add4'}--\eqref{add4} that
\begin{align*}
|V_{k+1}-V_{k}|\leq& \frac{\Lambda}{2}C_{1}\left[u_{k},B_{k},P_{k}\right]+\frac{\Lambda}{2}\sqrt{F_{m,1}\left[f_{k},g_{k}\right]},\\
|M_{k+1}-M_{k}|\leq& \frac{\Lambda}{2}C_{1}\left[u_{k},B_{k},P_{k}\right]+\frac{\Lambda}{2}\sqrt{F_{m,1}\left[f_{k},g_{k}\right]}.
\end{align*}
These estimates establish the validity of \eqref{equ3-27} for the $(k+1)$-step.

Finally, by applying the parabolic Campanato embedding theorem, there holds
\begin{align*}
C_{r^{k}}\left[u-\overline{u},B-\overline{B},0\right]
\leq C_{r^{k}}\left[u-V_{k},B-M_{k},0\right]
\leq\frac{1}{2^{k}}C_{1}\left[u,B,P\right]+\frac{k}{2^{k}}\sqrt{F_{m,1}\left[f,g\right]}
\leq\frac{k+1}{2^{k}}=r^{k\alpha},
\end{align*}
where 
\begin{align*}
\overline{u}=\dashint_{Q_{r^{k}}}u\,dxdt,\quad 
\overline{B}=\dashint_{Q_{r^{k}}}B\,dxdt.
\end{align*}
Therefore, we conclude that $u,B\in C^{\alpha}(\overline{Q}_{\frac{1}{2}})$
for some $\alpha\in(0,1)$, as expected.
\end{proof}

\section{Nonlinear Regularity with Small Energy}\label{sec4}

This section is dedicated to the proof of Theorem \ref{the1-2}. 
We first redefine the scaled solutions as follows:
\begin{align*}
u_{r}(x,t)=ru(rx,r^{2}t),\quad
B_{r}(x,t)=rB(rx,r^{2}t),\quad
P_{r}(x,t)=r^{2}P(rx,r^{2}t)
\end{align*}
with the corresponding scaled external forces:
\begin{align*}
f_{r}(x,t)=r^{3}f(rx,r^{2}t),\quad
g_{r}(x,t)=r^{3}g(rx,r^{2}t).
\end{align*}
The quantities associated with $(u,B,P)$ and $(f,g)$ are introduced as:
\begin{align*}
&N_{r}\left[u,B,P\right]:=rC_{r}\left[u,B,P\right]=r\left(\sqrt[3]{\dashint_{Q_{r}}|u|^{3}\,dxdt}
+\sqrt[3]{\dashint_{Q_{r}}|B|^{3}\,dxdt}
+r\sqrt[\frac{3}{2}]{\dashint_{Q_{r}}|P|^{\frac{3}{2}}\,dxdt}\right),\\
&Y_{m,r}\left[f,g\right]:=rF_{m,r}\left[f,g\right]=r^{3}\left(\sqrt[m]{\dashint_{Q_{r}}|f|^{m}\,dxdt}
+\sqrt[m]{\dashint_{Q_{r}}|g|^{m}\,dxdt}\right),\quad  m>\frac{5}{2}.
\end{align*}
Additionally, we define the local energy functional for the gradients:
\begin{align*}
E_{r}:=E_{r}\left[u,B\right]=\frac{1}{r}\int_{ Q_{r}}\left(|\nabla u|^{2}+|\nabla B|^{2}\right)dxdt,
\end{align*}
which admits the scaling invariance property
\begin{align*}
E_{r}\left[u,B\right]=E_{1}\left[u_{r},B_{r}\right],\quad \forall~r>0.
\end{align*}

The proof proceeds as follows. First, Lemmas \ref{lem4-1} and \ref{lem4-2} establish an iterative relation for $N_{r_0^k}\left[u,B,P\right]$. 
Then, invoking Lemma \ref{lem4-3}, we convert the preliminary assumption $\displaystyle\sup_{0 < r \leq 1} E_{r}\left[u,B\right] < \delta_{**}$ from Theorem \ref{the1-2} 
into the smallness condition required by Theorem \ref{the1-1}, after which the continuity of $(u,B)$ at the origin follows immediately.

\begin{lemma}\label{lem4-1}
Let $(u,B,P)$ be a suitable weak solution of \eqref{equ1-1} in $Q_{1}$ with $f,g\in L^{m}(Q_1)$ for $m>\frac{5}{2}$. 
Then there is a $C>0$ such that for $0<r\leq\frac{1}{2}$,
\begin{align}\label{equ4-1}
r^{3}\dashint_{Q_{r}}|u|^{3}\,dxdt
\leq CrN_{1}\left[u,B,P\right]^{3}+CrY_{m,1}\left[f,g\right]^{\frac{3}{2}}
+C\left(\frac{E_{1}^{2}}{r^{5}}
+\frac{ E_{1}^{\frac{4}{3}} }{r^{3}}
+\frac{ E_{1}^{\frac{6}{5}}  }{r^{\frac{13}{5}}}
+\frac{ E_{1}^{6}  }{r^{17}}\right),
\end{align}
\begin{align}\label{equ4-2}
r^{3}\dashint_{Q_{r}}|B|^{3}\,dxdt
\leq CrN_{1}\left[u,B,P\right]^{3}+CrY_{m,1}\left[f,g\right]^{\frac{3}{2}}
+C\left(\frac{E_{1}^{2}}{r^{5}}
+\frac{E_{1}^{\frac{4}{3}} }{r^{3}}
+\frac{E_{1}^{\frac{6}{5}} }{r^{\frac{13}{5}}}
+\frac{E_{1}^{6}}{r^{17}}\right),
\end{align}
\begin{align}\label{equ4-3}
r^{3}\dashint_{Q_{r}}|P|^{\frac{3}{2}}\,dxdt
\leq C\left(r+\frac{E_{1}}{r^{2}}\right)N_{1}\left[u,B,P\right]^{\frac{3}{2}}+\frac{CE_{1}}{r^{2}}Y_{m,1}\left[u,B,P\right]^{\frac{3}{4}}
+C\left(\frac{E_{1}^{2}}{r^{5}}+\frac{E_{1}^{\frac{3}{2}}}{r^{\frac{7}{2}}}\right).
\end{align}
\end{lemma}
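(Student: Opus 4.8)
The plan is to derive the two velocity-type bounds \eqref{equ4-1}--\eqref{equ4-2} from the interpolated monotonicity of Lemma \ref{lem2-4}, and the pressure bound \eqref{equ4-3} from the inhomogeneous monotonicity of Lemma \ref{lem2-3}, in each case feeding a local energy estimate into the right-hand side to convert spatial $L^2$-integrals into the gradient energy $E_1$. First I would prove \eqref{equ4-1}; the estimate \eqref{equ4-2} runs in parallel, applying Lemma \ref{lem2-4} to $B$ directly and using that the local energy inequality \eqref{equ1-2} controls $u$ and $B$ on an equal footing through the symmetric combination $|u|^2+|B|^2$, $|\nabla u|^2+|\nabla B|^2$. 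Applying Lemma \ref{lem2-4} to $u(\cdot,t)$ slicewise, integrating over $(-r^2,0)$, and multiplying by $r^3$ yields
\begin{align*}
r^{3}\dashint_{Q_{r}}|u|^{3}\,dxdt
\leq Cr\dashint_{Q_{1}}|u|^{3}\,dxdt
+\frac{C}{r^{2}}\Big(\sup_{t}\int_{B_{3/4}}|u|^{2}\,dx\Big)^{1/2}\int_{Q_{1}}|\nabla u|^{2}\,dxdt .
\end{align*}
The first term is exactly the leading contribution $CrN_{1}[u,B,P]^{3}$, while the gradient factor in the second term is bounded by $E_{1}$.

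It then remains to control $A:=\sup_{t}\int_{B_{3/4}}|u|^{2}\,dx$. For this I would rerun the local energy inequality \eqref{equ1-2} as in Lemma \ref{lem2-5}, but handle the forcing via the duality $\int u\cdot f\leq\|u\|_{L^{3}}\|f\|_{L^{3/2}}$ followed by Young's inequality, which produces the sharp contribution $Y_{m,1}[f,g]^{3/2}$ (rather than a cruder $L^{m}$-power) and absorbs the cubic part of the right-hand side. The Gagliardo--Nirenberg interpolation $L^{\infty}\big((-T,0);L^{2}\big)\cap L^{2}\big((-T,0);H^{1}\big)\hookrightarrow L^{10/3}$ used in \eqref{equ3-10} then expresses $A$ in terms of $E_{1}$, $N_{1}$ and $Y_{m,1}$, and a sequence of Young splittings distributes the mixed quantity $A^{1/2}E_{1}/r^{2}$ into the pure powers $\tfrac{E_{1}^{2}}{r^{5}},\tfrac{E_{1}^{4/3}}{r^{3}},\tfrac{E_{1}^{6/5}}{r^{13/5}},\tfrac{E_{1}^{6}}{r^{17}}$ displayed on the right of \eqref{equ4-1}.

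For the pressure \eqref{equ4-3}, I would use that taking the divergence of the momentum equation gives $\Delta P=-\operatorname{div}^{2}(u\otimes u-B\otimes B)$, so that Lemma \ref{lem2-3}(b) with $p=\tfrac32$ applies with $G=u\otimes u-B\otimes B$. Its two terms split $r^{3}\dashint_{Q_{r}}|P|^{3/2}$ into a monotone part governed by $\dashint_{Q_{1}}|P|^{3/2}\lesssim N_{1}^{3/2}$ (producing the $rN_{1}^{3/2}$ contribution) and a source part governed by $\dashint_{Q_{1}}(|u|^{3}+|B|^{3})$; inserting the already-established bounds \eqref{equ4-1}--\eqref{equ4-2}, together with a thin-cylinder interpolation of these $L^{3}$-quantities against $E_{1}$, generates the remaining $\tfrac{E_{1}}{r^{2}}$-weighted terms and the $\tfrac{E_{1}^{2}}{r^{5}}$, $\tfrac{E_{1}^{3/2}}{r^{7/2}}$ contributions. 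This is precisely where the magnetic coupling enters, since both $u$ and $B$ feed the right-hand side through $u\otimes u-B\otimes B$.

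The main obstacle will be the bookkeeping in the interpolation-and-Young step: one must track the exact negative powers of $r$ and the precise exponents of $E_{1}$, and in particular verify that the self-referential structure of the energy inequality (whose right-hand side again contains $\int|u|^{3}$) can be closed by absorption for small data, so that $A$ is genuinely dominated by $E_{1}$ and $N_{1}$ rather than left implicit. A secondary point demanding care is carrying the magnetic field through symmetrically, so that the cross-advection terms $(B\cdot\nabla)u$, $(u\cdot\nabla)B$ and the quadratic $u\otimes u-B\otimes B$ do not degrade the estimates and $B$ satisfies the same bound \eqref{equ4-2} as $u$.
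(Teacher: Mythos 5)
Your treatment of \eqref{equ4-1}--\eqref{equ4-2} is essentially the paper's own argument: slicewise application of Lemma \ref{lem2-4}, integration in time, control of $A=\sup_t\int_{B_{3/4}}|u|^2\,dx$ through the local energy estimate of Lemma \ref{lem2-5}, and Young splittings with $r$-weights to produce the pure powers of $E_1$. Two small remarks: the appeal to the $L^{10/3}$ Gagliardo--Nirenberg embedding is superfluous (the energy inequality bounds $A$ directly; no interpolation is involved), and your duality treatment $\int u\cdot f\le\|u\|_{L^3}\|f\|_{L^{3/2}}$ of the forcing is a harmless variant of the paper's reduction $Y_{m,1}^{m/2}\le CY_{m,1}^{5/4}$ (the cubic term it generates is not ``absorbed'' into the left side, it simply joins the $N_1^3$ contributions already present, which is fine).

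The pressure estimate \eqref{equ4-3}, however, has a genuine gap. Applying Lemma \ref{lem2-3}(b) with $G=u\otimes u-B\otimes B$ as you propose produces the source term $\frac{C}{r^{3}}\dashint_{B_1}|G|^{3/2}\,dx$, i.e.\ after time integration $\frac{C}{r^{3}}\dashint_{Q_{1}}\left(|u|^{3}+|B|^{3}\right)dxdt$ --- an integral over the \emph{unit} cylinder. The already-established bounds \eqref{equ4-1}--\eqref{equ4-2} control $r^{3}\dashint_{Q_{r}}|u|^{3}$ on the small cylinders $Q_{r}$ and say nothing about this quantity, so ``inserting'' them is a non sequitur; the only available bound is $\dashint_{Q_{1}}\left(|u|^{3}+|B|^{3}\right)dxdt\le CN_{1}^{3}$, which after multiplying by $r$ leaves a term of size $\frac{N_{1}^{3}}{r^{2}}$. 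No ``thin-cylinder interpolation'' can remove it: $\dashint_{Q_1}|u|^3$ is a full-size quantity, not an oscillation, and cannot be traded for gradient energy. Such a term is not admissible on the right of \eqref{equ4-3}, and the defect is fatal downstream: in Lemma \ref{lem4-2} only $E_\lambda$ is assumed small, so every power of $N_1$ must carry either a factor $r$ or a positive power of $E_1$; a contribution $\frac{N_1^3}{r^2}$ (equivalently $\frac{N_1^{2}}{r^{4/3}}$ after taking the $2/3$ power) is quadratic in $N_1$ with no smallness and destroys the linear iteration. The missing idea is the paper's mean subtraction: since ${\rm div}\,u={\rm div}\,B=0$, one has ${\rm div}^{2}\left(u\otimes u-B\otimes B\right)={\rm div}^{2}\bigl((u-\overline{u})\otimes(u-\overline{u})-(B-\overline{B})\otimes(B-\overline{B})\bigr)$ with $\overline{u},\overline{B}$ spatial averages, so Lemma \ref{lem2-3}(b) is applied with the oscillations as source; H\"{o}lder, Sobolev and Poincar\'{e} then give
\begin{align*}
\dashint_{B_{3/4}}|u-\overline{u}|^{3}\,dx
\le C\Bigl(\sup_{t}\int_{B_{3/4}}|u|^{2}\,dx\Bigr)^{1/2}\int_{B_{1}}|\nabla u|^{2}\,dx,
\end{align*}
which carries precisely the factor of $E_{1}$ that makes every term in \eqref{equ4-3} other than $CrN_{1}^{3/2}$ come weighted by a positive power of $E_{1}$.
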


\begin{proof}
We begin with the proof of \eqref{equ4-1} by applying \eqref{equ2-3}, which states that
\begin{align*}
\dashint_{B_{r}}|u|^{3}\,dx\leq C\dashint_{B_{1}}|u|^{3}\,dx
+\frac{C}{r^{3}}\Big(\int_{B_{\frac{3}{4}}}|u|^{2}\,dx\Big)^{\frac{1}{2}}\int_{B_{1}}|\nabla u|^{2}\,dx.
\end{align*}
Integrating this inequality w.r.t. the time-variable over $(-r^{2},0)\subset (-\frac{1}{4},0)$, we deduce that
\begin{align*}
\frac{1}{r^{3}|B_{1}|}\int_{Q_{r}}|u|^{3}\,dxdt
\leq C\dashint_{Q_{1}}|u|^{3}\,dxdt+\frac{C}{r^{3}}\sup_{t\in (-\frac{1}{4},0)}
\Big(\int_{B_{\frac{3}{4}}}|u|^{2}\,dx\Big)^{\frac{1}{2}}\int_{Q_{1}}|\nabla u|^{2}\,dxdt.
\end{align*}
Lemma \ref{lem2-5} applied to the above inequality implies that
\begin{align*}
\frac{1}{r^{3}|B_{1}|}\int_{Q_{r}}|u|^{3}\,dxdt
\leq& C\dashint_{Q_{1}}|u|^{3}\,dxdt
+\frac{C}{r^{3}}\left(\dashint_{Q_{1}}|u|^{3}+|B|^{3}+|P|^{\frac{3}{2}}\right)^{\frac{1}{2}}\int_{Q_{1}}|\nabla u|^{2}\,dxdt\\
&+\frac{C}{r^{3}}\left(\dashint_{Q_{1}}|u|^{3}+|B|^{3}\right)^{\frac{1}{6}}\int_{Q_{1}}|\nabla u|^{2}\,dxdt\\
&+\frac{C}{r^{3}}\left(\dashint_{Q_{1}}|f|^{m}+|g|^{m}\right)^{\frac{1}{2}}\int_{Q_{1}}|\nabla u|^{2}\,dxdt\\
\leq& CN_{1}^{3}+
\frac{C}{r^{3}}\left(N_{1}^{\frac{3}{2}}E_{1}
+N_{1}^{\frac{3}{4}}E_{1}
+N_{1}^{\frac{1}{2}}E_{1}
+Y_{m,1}^{\frac{m}{2}}E_{1}\right),
\end{align*}
where the last term can be estimated as 
\begin{align*}
Y_{m,1}^{\frac{m}{2}}E_{1}
=Y_{m,1}^{\frac{m}{2}-\frac{5}{4}}Y_{m,1}^{\frac{5}{4}}E_{1}
\le C Y_{m,1}^{\frac{5}{4}}E_{1}
\end{align*}
due to $m>\frac{5}{2}$ and $Y_{m,1}=\sqrt[m]{\dashint_{Q_{1}}|f|^{m}\,dxdt}
+\sqrt[m]{\dashint_{Q_{1}}|g|^{m}\,dxdt}<C$.
It follows from the Young inequality that
\begin{align*}
\frac{1}{r^{3}|B_{1}|}\int_{Q_{r}}|u|^{3}\,dxdt
\leq CN_{1}^{3}
+C\left(N_{1}^{3}+Y_{m,1}^{\frac{3}{2}}
+\frac{E_{1}^{2}}{r^{6}}
+\frac{E_{1}^{\frac{4}{3}}}{r^{4}}
+\frac{E_{1}^{\frac{6}{5}}}{r^{\frac{18}{5}}}
+\frac{E_{1}^{6}}{r^{18}}\right).
\end{align*}
The above display clearly yields our expected \eqref{equ4-1} after multiplying by $r$. 
The second item \eqref{equ4-2} can be obtained by the similar reasonings.

We now analyze the pressure term $P$, which satisfies the following equation
\begin{align*}
-\Delta P=&\sum_{i,j=1}^{3}\frac{\partial^{2}}{\partial x_{i}\partial x_{j}}\left(u^{i}u^{j}-B^{i}B^{j}\right)\\
=&\sum_{i,j=1}^{3}\frac{\partial^{2}}{\partial x_{i}\partial x_{j}}
\left((u^{i}-\overline{u}^{i})(u^{j}-\overline{u}^{j})-(B^{i}-\overline{B}^{i})(B^{j}-\overline{B}^{j})\right),
\end{align*}
where $\overline{u}^{i}=\dashint_{B_{\frac{3}{4}}}u^{i}\,dx$ and $\overline{B}^{i}=\dashint_{B_{\frac{3}{4}}}B^{i}\,dx$.
We utilize (b) in Lemma \ref{lem2-3} for $P$ with $p=\frac{3}{2}$ to find that
\begin{align}\label{equ4-51}
\dashint_{B_{r}}|P|^{\frac{3}{2}}\,dx
\leq C\dashint_{B_{1}}|P|^{\frac{3}{2}}\,dx
+\frac{C}{r^{3}}\dashint_{B_{\frac{3}{4}}}|u-\overline{u}|^{3}\,dx
+\frac{C}{r^{3}}\dashint_{B_{\frac{3}{4}}}|B-\overline{B}|^{3}\,dx.
\end{align}
With the help of the H\"{o}lder inequality, the Sobolev embedding and the Poincar\'{e} inequality, one sees
\begin{align}\label{equ4-52}
\frac{C}{r^{3}}\dashint_{B_{\frac{3}{4}}}|u-\overline{u}|^{3}\,dx
\leq&\frac{C}{r^{3}} \left(\int_{B_{\frac{3}{4}}}|u-\overline{u}|^{2}\,dx\right)^{\frac{3}{4}}
\left(\int_{B_{\frac{3}{4}}}|u-\bar{u}|^{6}\,dx\right)^{\frac{1}{4}} \nonumber\\
\leq&\frac{C}{r^{3}}\left(\int_{B_{\frac{3}{4}}}|u-\overline{u}|^{2}\,dx\right)^{\frac{3}{4}}
\left(\int_{B_{\frac{3}{4}}}|\nabla u|^{2}\,dx\right)^{\frac{3}{4}}\nonumber\\
\leq&\frac{C}{r^{3}}\left(\int_{B_{\frac{3}{4}}}|u-\overline{u}|^{2}\,dx\right)^{\frac{1}{2}}
\left(\int_{B_{\frac{3}{4}}}|\nabla u|^{2}\,dx\right)\nonumber \\
\leq&\frac{C}{r^{3}}\sup_{t\in (-\frac{1}{4},0)}\left(\int_{B_{\frac{3}{4}}}|u|^{2}\,dx\right)^{\frac{1}{2}}
\int_{B_{1}}|\nabla u|^{2}\,dx.
\end{align}
Similar arguments as above performed on $B$ give that
\begin{align}\label{equ4-53}
\frac{C}{r^{3}}\dashint_{B_{\frac{3}{4}}}|B-\overline{B}|^{3}\,dx
\leq\frac{C}{r^{3}}\sup_{t\in (-\frac{1}{4},0)}\left(\int_{B_{\frac{3}{4}}}|B|^{2}\,dx\right)^{\frac{1}{2}}
\int_{B_{1}}|\nabla B|^{2}\,dx.
\end{align}
Thus, by substituting \eqref{equ4-52} and \eqref{equ4-53} into \eqref{equ4-51}, there holds that
\begin{align*}
\dashint_{B_{r}}|P|^{\frac{3}{2}}\,dx
\leq& C\dashint_{B_{1}}|P|^{\frac{3}{2}}\,dx
+\frac{C}{r^{3}}\sup_{t\in (-\frac{1}{4},0)}\left(\int_{B_{\frac{3}{4}}}|u|^{2}+|B|^{2}\right)^{\frac{1}{2}}
\int_{B_{1}}\left(|\nabla u|^{2}+|\nabla B|^{2}\right)dx.
\end{align*}
An integration w.r.t. the time-variable over $(-r^{2},0)\subset (-\frac{1}{4},0)$ yields
\begin{align*}
\frac{1}{r^{3}|B_{1}|}\int_{Q_{r}}|P|^{\frac{3}{2}}\,dxdt
\leq C\dashint_{Q_{1}}|P|^{\frac{3}{2}}\,dxdt
+\frac{C}{r^{3}}\sup_{t\in (-\frac{1}{4},0)}\left(\int_{B_{\frac{3}{4}}}|u|^{2}+|B|^{2}\right)^{\frac{1}{2}}
\int_{Q_{1}}\left(|\nabla u|^{2}+|\nabla B|^{2}\right)dxdt.
\end{align*}
Lemma \ref{lem2-5} is used to infer that
\begin{align*}
\frac{1}{r^{3}|B_{1}|}\int_{Q_{r}}|P|^{\frac{3}{2}}\,dxdt
\leq& C\dashint_{Q_{1}}|P|^{\frac{3}{2}}\,dxdt
+\frac{C}{r^{3}}\left(\dashint_{Q_{1}}|u|^{3}+|B|^{3}+|P|^{\frac{3}{2}}\right)^{\frac{1}{2}}\int_{Q_{1}}\left(|\nabla u|^{2}+|\nabla B|^{2}\right)\,dxdt\\
&+\frac{C}{r^{3}}\left(\dashint_{Q_{1}}|u|^{3}+|B|^{3}\right)^{\frac{1}{6}}\int_{Q_{1}}\left(|\nabla u|^{2}+|\nabla B|^{2}\right)\,dxdt\\
&+\frac{C}{r^{3}}\left(\dashint_{Q_{1}}|f|^{m}+|g|^{m}\right)^{\frac{1}{2}}\int_{Q_{1}}\left(|\nabla u|^{2}+|\nabla B|^{2}\right)\, dxdt,
\end{align*}
where the definitions of $N_{1}$, $Y_{m,1}$, the boundedness of $Y_{m,1}$ and the Young inequality entail that
\begin{align*}
\frac{1}{r^{3}|B_{1}|}\int_{Q_{r}}|P|^{\frac{3}{2}}\,dxdt
\leq& CN_{1}^{\frac{3}{2}}
+\frac{C}{r^{3}}\left(N_{1}^{\frac{3}{2}}E_{1}
+N_{1}^{\frac{3}{4}}E_{1}
+N_{1}^{\frac{1}{2}}E_{1}
+Y_{m,1}^{\frac{m}{2}}E_{1}\right)\\
=& CN_{1}^{\frac{3}{2}}
+\frac{C}{r^{3}}\left(N_{1}^{\frac{3}{2}}E_{1}
+N_{1}^{\frac{3}{4}}E_{1}
+N_{1}^{\frac{1}{2}}E_{1}
+Y_{m,1}^{\frac{m}{2}-\frac{3}{4}}Y_{m,1}^{\frac{3}{4}}E_{1}\right)\\
\leq& CN_{1}^{\frac{3}{2}}
+\frac{C}{r^{3}}\left(N_{1}^{\frac{3}{2}}E_{1}
+N_{1}^{\frac{3}{4}}E_{1}
+N_{1}^{\frac{1}{2}}E_{1}
+Y_{m,1}^{\frac{3}{4}}E_{1}\right)\\
\leq& CN_{1}^{\frac{3}{2}}
+\frac{CE_{1}}{r^{3}}N_{1}^{\frac{3}{2}}
+\frac{CE_{1}}{r^{3}}Y_{m,1}^{\frac{3}{4}}
+C\left(N_{1}^{\frac{3}{2}}
+\frac{ E_{1}^{2}}{r^{6}}
+\frac{ E_{1}^{\frac{3}{2}}}{r^{\frac{9}{2}}}\right)\\
=& C\left(1+\frac{E_{1}}{r^{3}}\right)N_{1}^{\frac{3}{2}}
+\frac{CE_{1}}{r^{3}}Y_{m,1}^{\frac{3}{4}}
+C\left(\frac{E_{1}^{2}}{r^{6}}
+\frac{ E_{1}^{\frac{3}{2}}}{r^{\frac{9}{2}}}\right),
\end{align*}
which readily results in \eqref{equ4-3} after multiplying by $r$.
\end{proof}

\begin{lemma}\label{lem4-2}
Let $(u,B,P)$ be a suitable weak solution of \eqref{equ1-1} in $Q_{1}$ with $f,g\in L^{m}(Q_1)$ for $m>\frac{5}{2}$.
Take $\lambda_{0}=\frac{1}{8^{\frac{m}{2m-5}}}$ as given in Lemma \ref{lem3-2}.
There exist constants $0< r_0\leq\lambda_{0}$ and $0<\delta_{2}<1$ such that if $E_{\lambda}\left[u,B\right]\leq \delta_{2}$ for all $0<\lambda\leq1$, then
\begin{align}\label{equ4-5}
N_{r_0^{k}}\left[u,B,P\right]\leq \frac{1}{2^{k}}N_{1}\left[u,B,P\right]
+\frac{1}{2^{k}}\sum_{i=0}^{k-1}r_{0}^{\frac{i}{2}}\sqrt{Y_{m,1}\left[f,g\right]}
+\sum_{i=0}^{k-1}\frac{1}{2^{k-1-i}}E_{r_{0}^{i}}^{\frac{1}{3}}\left[u,B\right],\quad \forall~k\in\Bbb{N}^+.
\end{align}
\end{lemma}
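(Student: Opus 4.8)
The plan is to extract a \emph{single-step} decay estimate at a fixed small radius $r_0$ and then iterate it across the scales $r_0^k$, the key structural fact being that all three functionals are invariant under the parabolic scaling used to define the rescaled solutions. Writing $u_r(x,t)=ru(rx,r^2t)$, $B_r(x,t)=rB(rx,r^2t)$, $P_r(x,t)=r^2P(rx,r^2t)$, $f_r(x,t)=r^3f(rx,r^2t)$ and $g_r(x,t)=r^3g(rx,r^2t)$, a change of variables shows that $(u_r,B_r,P_r)$ is again a suitable weak solution of \eqref{equ1-1} with forces $(f_r,g_r)$ and that
\[
N_r[u,B,P]=N_1[u_r,B_r,P_r],\qquad Y_{m,r}[f,g]=Y_{m,1}[f_r,g_r],\qquad E_r[u,B]=E_1[u_r,B_r].
\]
In particular $N_{r_0^k}[u,B,P]=N_{r_0}\big[u_{r_0^{k-1}},B_{r_0^{k-1}},P_{r_0^{k-1}}\big]$, and the rescaled solution at level $k-1$ inherits the energy smallness $E_1[\cdot]=E_{r_0^{k-1}}[u,B]\le\delta_2$ directly from the hypothesis $\sup_{0<\lambda\le1}E_\lambda[u,B]\le\delta_2$. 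This is exactly what makes a clean recursion possible.

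First I would prove the single-step estimate: there exist $r_0\in(0,\lambda_0]$ with $r_0\le\tfrac12$ and $\delta_2\in(0,1)$ such that every suitable weak solution with $E_1[u,B]\le\delta_2$ satisfies
\[
N_{r_0}[u,B,P]\le\tfrac12 N_1[u,B,P]+\tfrac12\sqrt{Y_{m,1}[f,g]}+E_1^{\frac13}[u,B].
\]
To obtain it I apply Lemma \ref{lem4-1} with $r=r_0$, take cube roots of \eqref{equ4-1} and \eqref{equ4-2} and the $\tfrac23$-power of \eqref{equ4-3}, and use the elementary subadditivity $(a+b+\cdots)^\theta\le a^\theta+b^\theta+\cdots$ for $\theta\in\{\tfrac13,\tfrac23\}$; adding the three resulting pieces reconstructs $N_{r_0}[u,B,P]$. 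In this sum the $N_1$ and $\sqrt{Y_{m,1}}$ contributions are multiplied by pure positive powers of $r_0$ (namely $r_0^{1/3}$ from the velocity and magnetic terms and $r_0^{2/3}$ from the pressure), whereas every gradient term carries an $E_1$-exponent that is at least $\tfrac13$ (the exponents are $\tfrac25,\tfrac49,\tfrac23,2$ for the velocity and magnetic parts and $1,\tfrac43$ for the pressure) times a negative power of $r_0$. Accordingly I first fix $r_0$ small so that the pure powers of $r_0$ in front of $N_1$ and $\sqrt{Y_{m,1}}$ are each at most $\tfrac12$, and then, with $r_0$ frozen, choose $\delta_2$ so small that $E_1\le\delta_2$ both absorbs the residual $\delta_2$-dependent contributions to those two coefficients (which arise from the factor $E_1/r_0^2$ in \eqref{equ4-3}) and collapses all gradient terms into $E_1^{1/3}$ with total constant at most $1$, via $E_1^{p}\le\delta_2^{\,p-1/3}E_1^{1/3}$ for $p\ge\tfrac13$.

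With the single-step estimate available, I apply it to the rescaled solution at level $k-1$ and use the scaling identities to obtain the recursion
\[
N_{r_0^{k}}[u,B,P]\le\tfrac12 N_{r_0^{k-1}}[u,B,P]+\tfrac12\sqrt{Y_{m,r_0^{k-1}}[f,g]}+E_{r_0^{k-1}}^{\frac13}[u,B].
\]
Unrolling this recursion yields
\[
N_{r_0^k}[u,B,P]\le\frac{1}{2^k}N_1[u,B,P]+\sum_{i=0}^{k-1}\frac{1}{2^{k-i}}\sqrt{Y_{m,r_0^{i}}[f,g]}+\sum_{i=0}^{k-1}\frac{1}{2^{k-1-i}}E_{r_0^{i}}^{\frac13}[u,B],
\]
whose last sum is already the energy term of \eqref{equ4-5}. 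To match the middle term I invoke Lemma \ref{lem3-2}: since $r_0\le\lambda_0$ we have $F_{m,r_0^{i}}[f,g]\le 4^{-i}F_{m,1}[f,g]$, hence $Y_{m,r_0^i}[f,g]=r_0^{i}F_{m,r_0^i}[f,g]\le 4^{-i}r_0^{i}Y_{m,1}[f,g]$ and therefore $\sqrt{Y_{m,r_0^i}[f,g]}\le 2^{-i}r_0^{i/2}\sqrt{Y_{m,1}[f,g]}$; substituting this turns $\sum_i 2^{-(k-i)}\sqrt{Y_{m,r_0^i}}$ exactly into $2^{-k}\sum_i r_0^{i/2}\sqrt{Y_{m,1}}$, which is the desired term of \eqref{equ4-5}. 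The main obstacle is the single-step estimate itself: one must verify that the whole hierarchy of gradient terms in Lemma \ref{lem4-1}, together with the accompanying negative powers of $r_0$, can be dominated by a single $E_1^{1/3}$ with a controllable constant, and in particular that the two smallness parameters $r_0$ and $\delta_2$ can be chosen in the correct order so that the three target coefficients $\tfrac12$, $\tfrac12$ and $1$ are attained simultaneously.
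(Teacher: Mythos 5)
Your proposal is correct and follows essentially the same route as the paper's proof: a single-step decay estimate at scale $r_0$ obtained from Lemma \ref{lem4-1} by taking the powers $\tfrac13$ and $\tfrac23$, choosing first $r_0$ and then $\delta_2$ to make the coefficient of $N_1+\sqrt{Y_{m,1}}$ at most $\tfrac12$ and to absorb all gradient terms into $E_1^{1/3}$, followed by iteration across the scales $r_0^k$ using the force decay of Lemma \ref{lem3-2}. The only (harmless) presentational differences are that you make the parabolic scaling invariance of $N$, $Y_{m}$, $E$ explicit and unroll the recursion before inserting the bound $\sqrt{Y_{m,r_0^i}}\leq 2^{-i}r_0^{i/2}\sqrt{Y_{m,1}}$, whereas the paper argues by induction with that bound built into each step.
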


\begin{proof}
By applying $(A+B+C)^{\alpha}\leq A^{\alpha}+B^{\alpha}+C^{\alpha}$ to \eqref{equ4-1} and \eqref{equ4-2} for $\alpha=\frac{1}{3}$
and to \eqref{equ4-3} for $\alpha=\frac{2}{3}$, and then summing the estimates, we obtain
\begin{align*}
N_{r}\leq& C\left(r^{\frac{1}{3}}+r^{\frac{2}{3}}
+\frac{E_{1}^{\frac{2}{3}}}{r^{\frac{4}{3}}}\right)\left(N_{1}+\sqrt{Y_{m,1}}\right)
+C\left(\frac{E_{1}^{\frac{2}{3}}}{r^{\frac{5}{3}}}
+\frac{E_{1}^{\frac{4}{9}}}{r}
+\frac{E_{1}^{\frac{2}{5}}}{r^{\frac{13}{15}}}
+\frac{E_{1}^{2}}{r^{\frac{17}{3}}}
+\frac{E_{1}^{\frac{4}{3}}}{r^{\frac{10}{3}}}
+\frac{E_{1}}{r^{\frac{7}{3}}}\right)\\
=& C\left(r^{\frac{1}{3}}+r^{\frac{2}{3}}
+\frac{E_{1}^{\frac{2}{3}}}{r^{\frac{4}{3}}}\right)\left(N_{1}+\sqrt{Y_{m,1}}\right)
+C\left(\frac{E_{1}^{\frac{1}{3}}}{r^{\frac{5}{3}}}
+\frac{E_{1}^{\frac{1}{9}}}{r}
+\frac{E_{1}^{\frac{1}{15}}}{r^{\frac{13}{15}}}
+\frac{E_{1}^{\frac{5}{3}}}{r^{\frac{17}{3}}}
+\frac{E_{1}}{r^{\frac{10}{3}}}
+\frac{E_{1}^{\frac{2}{3}}}{r^{\frac{7}{3}}}
\right)E_{1}^{\frac{1}{3}}.
\end{align*}
Hence, we select $r_0$ and $\delta_{2}$ sufficiently small such that
\begin{align*}
C\left(r_{0}^{\frac{1}{3}}+r_{0}^{\frac{2}{3}}+\frac{\delta_{2}^{\frac{2}{3}}}{r_{0}^{\frac{4}{3}}}\right)\leq \frac{1}{2}
\end{align*}
as well as
\begin{align*}
C\left(\frac{\delta_{2}^{\frac{1}{3}}}{r_{0}^{\frac{5}{3}}}
+\frac{\delta_{2}^{\frac{1}{9}}}{r_{0}}
+\frac{\delta_{2}^{\frac{1}{15}}}{r_{0}^{\frac{13}{15}}}
+\frac{\delta_{2}^{\frac{5}{3}}}{r_{0}^{\frac{17}{3}}}
+\frac{\delta_{2}}{r_{0}^{\frac{10}{3}}}
+\frac{\delta_{2}^{\frac{2}{3}}}{r_{0}^{\frac{7}{3}}}\right)
\leq 1,
\end{align*}
thereby ensuring \eqref{equ4-5} for the case of $k=1$.

According to the definition of $Y_{m,r_{0}^{k}}\left[f,g\right]$ and Lemma \ref{lem3-2}, it yields that
\begin{align}\label{equ4-6}
Y_{m,r_{0}^{k}}\left[f,g\right]=r_{0}^{k}F_{m,r_{0}^{k}}\left[f,g\right]
\leq\frac{r_{0}^{k}}{4^{k}}F_{m,1}\left[f,g\right]=\frac{r_{0}^{k}}{4^{k}}Y_{m,1}\left[f,g\right],\quad\forall~k\in\Bbb{N}^+.
\end{align}
We now prove \eqref{equ4-5} by induction. Assume that the following inequality holds for a given $k\in\Bbb{N}^+$:
\begin{align}\label{equ4-5'}
N_{r_{0}^{k}}\leq \frac{1}{2^{k}}N_{1}+\frac{1}{2^{k}}\sum_{i=0}^{k-1}r_{0}^{\frac{i}{2}}\sqrt{Y_{m,1}}+\sum_{i=0}^{k-1}\frac{1}{2^{k-1-i}}E_{r_{0}^{i}}^{\frac{1}{3}}.
\end{align}
Then, by applying \eqref{equ4-5} with $k=1$, \eqref{equ4-5'} and \eqref{equ4-6}, we derive
\begin{align*}
N_{r_{0}^{k+1}}\leq& \frac{1}{2}N_{r_{0}^{k}}+\frac{1}{2}\sqrt{Y_{m,r_{0}^{k}}}+E_{r_{0}^{k}}^{\frac{1}{3}}\\
\leq&\frac{1}{2}\left(\frac{1}{2^{k}}N_{1}+\frac{1}{2^{k}}\sum_{i=0}^{k-1}r_{0}^{\frac{i}{2}}\sqrt{Y_{m,1}}
+\sum_{i=0}^{k-1}\frac{1}{2^{k-1-i}}E_{r_{0}^{i}}^{\frac{1}{3}}\right)
+\frac{r_{0}^{\frac{k}{2}}}{2^{k+1}}\sqrt{Y_{m,1}}+E_{r_{0}^{k}}^{\frac{1}{3}}\\
=&\frac{1}{2^{k+1}}N_{1}+\frac{1}{2^{k+1}}\sum_{i=0}^{k}r_{0}^{\frac{i}{2}}\sqrt{Y_{m,1}}
+\sum_{i=0}^{k}\frac{1}{2^{k-i}}E_{r_{0}^{i}}^{\frac{1}{3}}.
\end{align*}
This completes the induction step, and therefore the entire proof.
\end{proof}

\begin{lemma}\label{lem4-3}
Assume that $(u,B,P)$ is a suitable weak solution of \eqref{equ1-1} in $Q_{1}$ with $f,g\in L^{m}(Q_1)$ for $m>\frac{5}{2}$.
Let $r_{0}>0$ and $\delta_{2}>0$ be as set forth in Lemma \ref{lem4-2}, and define $\delta_3:=\min\{\delta_{2},\frac{\delta_{*}^{3}}{2^{6}}\}$,
where $\delta_{*}$ is chosen according to Theorem \ref{the1-1}.
We have that if $E_{\lambda}\left[u,B\right]\leq\delta_3$ for all $0<\lambda\leq 1$, 
then $u$ and $B$ are H\"{o}lder continuous in $\overline{Q}_{r_{1}}$ for
\begin{align*}
r_{1}:=\frac{1}{2}\min\left\{r_{0}^{2}\left(\frac{N_{1}\left[u,B,P\right]}{\delta_{*}}\right)^{\frac{\ln r_{0}}{\ln 2}},
r_{0}^{4}\left(\frac{\sqrt{Y_{m,1}\left[f,g\right]}}{\delta_{*}}\right)^{\frac{\ln r_{0}}{\ln 2}}\right\}.
\end{align*}
\end{lemma}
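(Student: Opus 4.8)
The plan is to reduce the statement to Theorem \ref{the1-1} via the parabolic scaling fixed at the start of this section. First I would record the two scaling identities that drive the reduction: for the rescaled triple $(u_r,B_r,P_r)$ and rescaled forces $(f_r,g_r)$ one checks, by the change of variables $y=rx$, $s=r^2t$, that
$$C_1[u_r,B_r,P_r]=N_r[u,B,P],\qquad F_{m,1}[f_r,g_r]=Y_{m,r}[f,g],$$
and that $(u_r,B_r,P_r)$ is again a suitable weak solution of \eqref{equ1-1} in $Q_1$ with forces $(f_r,g_r)$, since the MHD system is invariant under $u\mapsto r\,u(r\,\cdot\,,r^2\,\cdot\,)$, $B\mapsto r\,B$, $P\mapsto r^2P$, $f\mapsto r^3f$, $g\mapsto r^3g$. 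Hence, if I can locate a scale $r_0^{k}$ at which $N_{r_0^{k}}[u,B,P]+\sqrt{Y_{m,r_0^{k}}[f,g]}\le\delta_{*}$, then Theorem \ref{the1-1} applied to $(u_{r_0^{k}},B_{r_0^{k}},P_{r_0^{k}})$ gives $u_{r_0^{k}},B_{r_0^{k}}\in C^{\alpha}(\overline Q_{1/2})$, and undoing the scaling transfers this to H\"{o}lder continuity of $u,B$ on $\overline Q_{r_0^{k}/2}$.

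Second, I would insert the hypothesis $E_{\lambda}[u,B]\le\delta_3$ into Lemma \ref{lem4-2}. On its right-hand side \eqref{equ4-5} the energy block is controlled uniformly in $k$ by
$$\sum_{i=0}^{k-1}\frac{1}{2^{\,k-1-i}}E_{r_0^{i}}^{\frac13}[u,B]\le \delta_3^{\frac13}\sum_{j=0}^{k-1}\frac{1}{2^{j}}\le 2\delta_3^{\frac13}\le\frac{\delta_{*}}{2},$$
where the last inequality is exactly where the choice $\delta_3=\min\{\delta_2,\delta_{*}^{3}/2^{6}\}$ enters. Adding the contribution $\sqrt{Y_{m,r_0^{k}}}\le 2^{-k}r_0^{k/2}\sqrt{Y_{m,1}}$ supplied by \eqref{equ4-6}, and using $\sum_{i=0}^{k}r_0^{i/2}\le(1-\sqrt{r_0})^{-1}$, the quantity I must force below $\delta_{*}$ collapses to $\frac{1}{2^{k}}N_1[u,B,P]+\frac{C}{2^{k}}\sqrt{Y_{m,1}[f,g]}+\frac{\delta_{*}}{2}$, with $C$ a universal constant depending only on $r_0$.

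Third, I would fix $k$ to be the smallest positive integer for which $2^{k}$ simultaneously exceeds a fixed multiple of $N_1/\delta_{*}$ and of $\sqrt{Y_{m,1}}/\delta_{*}$, so that the two displayed terms fall below $\delta_{*}/2$ and the full smallness $N_{r_0^{k}}+\sqrt{Y_{m,r_0^{k}}}\le\delta_{*}$ holds. Theorem \ref{the1-1} then applies at scale $r_0^{k}$ and yields continuity on $\overline Q_{r_0^{k}/2}$. The only genuinely delicate computation is converting this integer threshold back into a radius: writing $k$ as the ceiling of the relevant logarithm and using $r_0^{\log_2 x}=x^{\ln r_0/\ln 2}$, the velocity–pressure threshold produces a lower bound $r_0^{k}\ge r_0^{2}(N_1/\delta_{*})^{\ln r_0/\ln 2}$ while the force threshold produces $r_0^{k}\ge r_0^{4}(\sqrt{Y_{m,1}}/\delta_{*})^{\ln r_0/\ln 2}$. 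Because both constraints must be met, the admissible $k$ is the larger of the two required integers, and since $\ln r_0/\ln 2<0$ reverses inequalities, $r_0^{k}$ is the smaller of the two radii; halving gives precisely $r_1=\tfrac12\min\{\cdots\}$ with $r_1\le r_0^{k}/2$, whence continuity on $\overline Q_{r_1}\subset\overline Q_{r_0^{k}/2}$.

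The main obstacle I anticipate is exactly this last bookkeeping: pinning down the multiplicative constants in the two thresholds so that they reproduce the stated powers $r_0^{2}$ and $r_0^{4}$ after absorbing the geometric-sum constant $C$ and the ceiling loss, and verifying that the $\min$ faithfully records the more restrictive of the velocity–pressure and external-force requirements. By contrast, the analytic core — the scaling invariance of \eqref{equ1-1} together with the summation in Lemma \ref{lem4-2} and the geometric decay from Lemma \ref{lem3-2} — is routine once those lemmas are granted.
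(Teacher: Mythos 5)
Your proposal is correct and follows essentially the same route as the paper's proof: both insert the energy hypothesis into Lemma \ref{lem4-2}, bound the energy block by $2\delta_3^{1/3}\leq \delta_*/2$ via the choice $\delta_3\leq \delta_*^3/2^6$, use the geometric decay of $Y_{m,r_0^k}$ from \eqref{equ4-6}, choose $k$ logarithmically so that $N_{r_0^k}\left[u,B,P\right]+\sqrt{Y_{m,r_0^k}\left[f,g\right]}\leq\delta_*$, and then invoke the scaling identity $C_{1}\left[u_{r_{0}^{k}},B_{r_{0}^{k}},P_{r_{0}^{k}}\right]+\sqrt{F_{m,1}\left[f_{r_{0}^{k}},g_{r_{0}^{k}}\right]}=N_{r_{0}^{k}}\left[u,B,P\right]+\sqrt{Y_{m,r_0^{k}}\left[f,g\right]}$ together with Theorem \ref{the1-1} to conclude on $\overline{Q}_{r_0^k/2}$. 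The ``ceiling loss'' you flag as the main remaining obstacle is glossed over in the paper as well (its proof effectively takes $k$ equal to the logarithmic threshold, ignoring integrality, which at worst costs a harmless factor $r_0$ in the final radius), so your treatment is no less complete than the paper's own.
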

\begin{proof}
By referring to equations \eqref{equ4-5} and \eqref{equ4-6}, we find
\begin{align*}
N_{r_0^{k}}\left[u,B,P\right]+\sqrt{Y_{m,r_0^{k}}\left[f,g\right]}
\leq& \frac{1}{2^{k}}N_{1}\left[u,B,P\right]+\frac{1}{2^{k}}\sum_{i=0}^{k}r_{0}^{\frac{i}{2}}\sqrt{Y_{m,1}\left[f,g\right]}
+\sum_{i=0}^{k-1}\frac{1}{2^{k-1-i}}E_{r_0^{i}}^{\frac{1}{3}}\left[u,B\right]\\
\leq&\frac{1}{2^{k}}N_{1}\left[u,B,P\right]+\frac{1}{2^{k-2}}\sqrt{Y_{m,1}\left[f,g\right]}
+2\delta_{3}^{\frac{1}{3}}, \quad\forall~ k\in\Bbb{N}^+,
\end{align*}
if we take
\begin{align*}
k\geq \max \left\{\log_{2}\left(\frac{N_{1}\left[u,B,P\right]}{\delta_{*}}\right)+2,
\log_{2}\left(\frac{\sqrt{Y_{m,1}\left[f,g\right]}}{\delta_{*}}\right)+4\right\},
\end{align*}
thus
\begin{align*}
N_{r_{0}^{k}}\left[u,B,P\right]+\sqrt{Y_{m,r_0^{k}}\left[f,g\right]}\leq \frac{\delta_{*}}{2^{2}}+\frac{\delta_{*}}{2^{2}}+\frac{\delta_{*}}{2}=\delta_{*}.
\end{align*}
Observing that the definitions of $u_{r_{0}^{k}}$, $B_{r_{0}^{k}}$, $P_{r_{0}^{k}}$ $f_{r_{0}^{k}}$ and $g_{r_{0}^{k}}$ ensure that
\begin{align*}
C_{1}\left[u_{r_{0}^{k}},B_{r_{0}^{k}},P_{r_{0}^{k}}\right]
+\sqrt{F_{m,1}\left[f_{r_{0}^{k}},g_{r_{0}^{k}}\right]}
=N_{r_{0}^{k}}\left[u,B,P\right]+\sqrt{Y_{m,r_0^{k}}\left[f,g\right]}\leq \delta_{*},
\end{align*}
we thus show that $u_{r_{0}^{k}}$ and $B_{r_{0}^{k}}$ are H\"{o}lder continuous in $\overline{Q}_{\frac{1}{2}}$ through the application of Theorem \ref{the1-1}.
This means that $u$ and $B$ are H\"{o}lder continuous in $\overline{Q}_{\frac{r_{0}^{k}}{2}}$, as claimed. 
\end{proof}

\begin{proof}
[\bf Proof of Theorem \ref{the1-2}.]
We let $r_{0}>0$, $\delta_{2}>0$ be as given in Lemma \ref{lem4-2} and $\delta_{*}$ be the constant from Theorem \ref{the1-1}, 
and then select $\delta_{**}:=\min\{\delta_{2},\frac{\delta_{*}^{3}}{2^{6}}\}$ in the statement of Theorem \ref{the1-2}.
For $R>0$, we notice that $E_{\frac{r}{R}}\left[u_R, B_R\right]= E_r\left[u,B\right]\le \delta_{**}$ for any $\frac{r}{R}\le 1$,
which implies the continuity of $u_R$ and $B_R$ in $Q_{r_1}$ by revisiting Lemma \ref{lem4-3}.
This guarantees that $u$ and $B$ are continuous in $Q_{ Rr_1 }$. 
The last conclusion regarding the one-dimensional parabolic Hausdorff measure of the singular set for any suitable 
weak solution follows from the Vitali covering argument, complete details can be found in the proof of {\cite[Proof of Theorem B]{CKN1982}.}
\end{proof}

In the final part, we shall utilize the arguments in \cite{WZ2013} to demonstrate that the velocity field still plays a more important role than the magnetic field, 
even in the presence of the external force term. 
To this end, let's  revisit the relevant quantities:
\begin{align*}
A_{r}\left[\varphi\right]&=\sup_{-r^{2}< t<0}r^{-1}\int_{B_{r}}|\varphi|^{2}\,dx,\quad\quad\quad\quad
E_{r}\left[\varphi\right]=r^{-1}\int_{Q_{r}}|\nabla \varphi|^{2}\,dxdt,\\
\tilde{C}_{r}\left[\varphi\right]&=r^{-2}\int_{Q_{r}}|\varphi|^{3}\,dxdt,\quad\quad\quad\quad\quad\quad\,\,\,
K_{r}\left[\varphi\right]=r^{-3}\int_{Q_{r}}|\varphi|^{2}\,dxdt.
\end{align*}
Let $A_{r}\left[u,B\right]=A_{r}\left[u\right]+A_{r}\left[B\right]$, and similarly $E_{r}\left[u,B\right]$, 
$\tilde{C}_{r}\left[u,B\right]$, $K_{r}\left[u,B\right]$ represent the corresponding expressions. 
Simultaneously, the following quantities are introduced:
\begin{align*}
G_{r}\left[\varphi,p,q\right]&=r^{1-\frac{3}{p}-\frac{2}{q}}\left\|\varphi\right\|_{L^{p,q}(Q_{r})},\quad\quad
\tilde{G}_{r}\left[\varphi,p,q\right]=r^{1-\frac{3}{p}-\frac{2}{q}}\left\|\varphi-(\overline{\varphi})_{B_{r}}\right\|_{L^{p,q}(Q_{r})},\\
H_{r}\left[\varphi,p,q\right]&=r^{2-\frac{3}{p}-\frac{2}{q}}\left\|\varphi\right\|_{L^{p,q}(Q_{r})},\quad\quad
\tilde{H}_{r}\left[\varphi,p,q\right]=r^{2-\frac{3}{p}-\frac{2}{q}}\left\|\varphi-(\overline{\varphi})_{B_{r}}\right\|_{L^{p,q}(Q_{r})}.
\end{align*}
Additionally, we define the quantity of the external force term as
\begin{align*}
M_{r}\left[\varphi\right]=r^{\frac{4m-10}{m}}\sqrt[\frac{m}{2}]{\int_{Q_{r}}|\varphi|^{m}\,dxdt},\quad m>\frac{5}{2}.
\end{align*}

\begin{proof}
  [\bf Proof of Corollary \ref{pro1-1}]
In fact, we only need to suppose that 
\begin{align*}
\sup_{0<r<\overline{R}}r^{1-\frac{3}{p}-\frac{2}{q}}\left\|u\right\|_{L^{p,q}(Q_{r})}<\overline{\delta}_{**}
\end{align*}
with $\overline{\delta}_{**}>0$ to be decided later and $(p,q)$ satisfying
\begin{align*}
\frac{3}{p}+\frac{2}{q} = 2,\quad 1\leq q\leq\infty,\quad (p,q)\neq (\infty,1).
\end{align*}

Let $0<4r<\rho<\overline{R}$, $1\leq p,q\leq\infty$, and $\zeta$ be a cutoff function which vanishes outside of $Q_{\rho}$, equaling 1 in $Q_{\rho/2}$ with
\begin{align*}
|\nabla\zeta |\leq C\rho^{-1},\quad  |\partial_{s}\zeta|, |\Delta\zeta|\leq C\rho^{-2} \quad {\rm in}~Q_{\rho}.
\end{align*}
After introducing the backward heat kernel as 
\begin{align*}
\Gamma (x,t)=\frac{1}{4\pi(r^{2}-t)^{3/2}}e^{-\frac{|x|^{2}}{4(r^{2}-t)}},
\end{align*}
we set the test function $\phi=\Gamma\zeta$ in the local energy inequality \eqref{equ1-2} to infer that
\begin{align*}
&\sup_{ -\rho^2< t<0 }\int_{B_{\rho}}\left(|u|^{2}+|B|^{2}\right)\phi\,dx
+2\int_{Q_{\rho}}\left(|\nabla u|^{2}+|\nabla B|^{2}\right)\phi\,dxds\\
\leq&\int_{Q_{\rho}}\Big(\left(|u|^{2}+|B|^{2}\right)\left(\partial_{s}\phi+\Delta\phi\right)
+u\cdot\nabla\phi\left(|u|^{2}+|B|^{2}+2P-(2P)_{B_{\rho}}\right)-2\left(u\cdot B\right)B\cdot\nabla\phi\Big)\,dxds\\
&+2\int_{Q_{\rho}}\left(u\cdot f+B\cdot g\right)\phi\,dxds\\
\leq& C\int_{Q_{\rho}}\Big(\left(|u|^{2}+|B|^{2}\right)\left(\Gamma\Delta\zeta+\Gamma\partial_{s}\zeta+2\nabla\Gamma\cdot\nabla\zeta\right)
+|\nabla\phi||u|\left(|u|^{2}+|B|^{2}+2|P-(P)_{B_{\rho}}|\right)\Big)\,dxds\\
&+C\int_{Q_{\rho}}\left(|u||f|+|B||g|\right)|\phi|\,dxds.
\end{align*}
The test function enjoys the following features:
\begin{align*}
&C^{-1}r^{-3} \leq \Gamma (x,t)\leq C r^{-3} \quad {\rm in}~ Q_{r};\\
&|\nabla\phi|\leq |\nabla\Gamma|\zeta+\Gamma|\nabla\zeta| \leq Cr^{-4},\quad
|\Gamma\Delta\zeta|+|\Gamma\partial_{s}\zeta|+2|\nabla\Gamma\cdot\nabla\zeta|\leq C\rho^{-5} \quad {\rm in}~Q_{\rho}.
\end{align*}
Let $(p',q')$ be the conjugate index of $(p,q)$. By applying the above properties of the test function and utilizing the H\"{o}lder inequality, we derive
\begin{align}\label{equ4-8}
&A_{r}\left[u,B\right]+E_{r}\left[u,B\right] \nonumber\\
\leq& C\left(\frac{r}{\rho}\right)^{2}K_{\rho}\left[u,B\right]
+C\left(\frac{\rho}{r}\right)^{2}\rho^{-2}
\int_{Q_{\rho}}\left(|u|^{3}+|u||B|^{2}+|u||P-(P)_{B_{\rho}}|\right)\,dxds \nonumber\\
&+C\left(\frac{r}{\rho}\right)^{2}\rho^{-1}\int_{Q_{\rho}}\left(|u||f|+|B||g|\right)\,dxds\nonumber\\
\leq& C\left(\frac{r}{\rho}\right)^{2}K_{\rho}\left[u,B\right]+C\left(\frac{\rho}{r}\right)^{2}
\left(\tilde{C}_{\rho}\left[u\right]+G_{\rho}\left[u,p,q\right]\left(G_{\rho}\left[B,2p',2q'\right]^{2}
+\tilde{H}_{\rho}\left[P,p',q'\right]\right)\right) \nonumber\\
&+C\left(\frac{r}{\rho}\right)^{2}\left(A_{\rho}\left[u,B\right]+M_{\rho}\left[f,g\right]\right), 
\end{align}
where we have used the fact 
\begin{align*}
\rho^{-1}\int_{Q_{\rho}}|u||f|\,dxds
\leq& \rho^{-1}\int_{Q_{\rho}}\left(|u|^{2}+|f|^{2}\right)dxds\\
\leq& \rho\sup_{-\rho^{2}< t<0}\int_{B_{\rho}}|u|^{2}\, dx
+\rho^{-1}\left(\int_{Q_{\rho}}|f|^{2\times\frac{m}{2}}\,dxds\right)^{\frac{2}{m}}\left(\int_{Q_{\rho}}1\,dxds\right)^{1-\frac{2}{m}}\\
\leq& A_{\rho}\left[u\right]+M_{\rho}\left[f\right],
\end{align*}
and similarly,
\begin{align*}
\rho^{-1}\int_{Q_{\rho}}|B||g|\,dxds\leq A_{\rho}\left[B\right]+M_{\rho}\left[g\right].
\end{align*}

To proceed our proof, we need to recall some existing estimates for the quantities appearing in \eqref{equ4-8}. As stated in \cite[Lemmas 3.1--3.3]{WZ2013}, we have
\begin{align}\label{equ4-9}
\tilde{C}_{r}\left[u\right]\leq C\overline{\delta}_{**}\left(A_{r}\left[u\right]+E_{r}\left[u\right]\right),
\end{align}
\begin{align}\label{equ4-10}
G_{r}\left[B,2p',2q'\right]^{2}\leq C\left(A_{r}\left[B\right]+E_{r}\left[B\right]\right),
\end{align}
\begin{align}\label{equ4-11}
\tilde{H}_{r}\left[P,p',q'\right]
\leq C\left(\frac{\rho}{r}\right)\tilde{G}_{\rho}\left[u,B,2p',2q'\right]^{2}
+C\left(\frac{r}{\rho}\right)^{3/p'}\tilde{H}_{\rho}\left[P,p',q'\right].
\end{align}
Let us choose $0<8r<\rho<\kappa<\overline{R}$ and set
\begin{align}\label{equ4-12}
\tilde{F}\left[r\right]=A_{r}\left[u,B\right]+E_{r}\left[u,B\right]
+\overline{\delta}_{**}^{1/2}\tilde{H}_{r}\left[P,p',q'\right]+M_{r}\left[f,g\right].
\end{align}
Combining \eqref{equ4-8}--\eqref{equ4-12}, it follows that
\begin{align*}
\tilde{F}\left[r\right]\leq& C\left(\frac{r}{\rho}\right)^{2}K_{\rho}\left[u,B\right]
+C\left(\frac{\rho}{r}\right)^{2}\left(\tilde{C}_{\rho}\left[u\right]
+\overline{\delta}_{**} G_{\rho}\left[B,2p',2q'\right]^{2}
+\overline{\delta}_{**}\tilde{H}_{\rho}\left[P,p',q'\right]\right)\\
&+C\left(\frac{r}{\rho}\right)^{2}\tilde{F}\left[\rho\right]
+\overline{\delta}_{**}^{1/2}\tilde{H}_{r}\left[P,p',q'\right]+M_{r}\left[f,g\right]\\
\leq& C\left(\frac{r}{\rho}\right)^{2}\tilde{F}\left[\rho\right]
+C\left(\frac{\rho}{r}\right)^{2}\overline{\delta}_{**}\tilde{F}\left[\rho\right]
+C\left(\frac{\rho}{r}\right)^{2}\overline{\delta}_{**}\tilde{H}_{\rho}\left[P,p',q'\right]
+\overline{\delta}_{**}^{1/2}\tilde{H}_{r}\left[P,p',q'\right]+M_{r}\left[f,g\right].
\end{align*}
Taking $r=\theta\rho$ and $\rho=\theta\kappa$ with $0<\theta<\frac{1}{8}$, we get
\begin{align*}
\tilde{F}\left[r\right]\leq& C\left(\frac{r}{\rho}\right)^{2}\tilde{F}\left[\rho\right]
+C\left(\frac{\rho}{r}\right)^{2}\overline{\delta}_{**}\tilde{F}\left[\rho\right]
+C\left(\frac{\rho}{r}\right)^{2}\overline{\delta}_{**}\left(\frac{\kappa}{\rho}\right)\tilde{F}\left[\kappa\right]\\
&+C\left(\frac{\rho}{r}\right)^{2}\overline{\delta}_{**}\left(\frac{\rho}{\kappa}\right)^{3/p'}\tilde{H}_{\kappa}\left[P,p',q'\right]
+C\overline{\delta}_{**}^{1/2}\left(\frac{\kappa}{r}\right)\tilde{F}\left[\kappa\right]\\
&+C\overline{\delta}_{**}^{1/2}\left(\frac{r}{\kappa}\right)^{3/p'}\tilde{H}_{\kappa}\left[P,p',q'\right]
+C\left(\frac{r}{\kappa}\right)^{(4m-10)/m}\tilde{F}\left[\kappa\right]\\
\leq& C\bigg(\left(\frac{r}{\rho}\right)^{2}\left(\frac{\kappa}{\rho}\right)
+\left(\frac{\rho}{r}\right)^{2}\overline{\delta}_{**}\left(\frac{\kappa}{\rho}\right)
+\left(\frac{\rho}{r}\right)^{2}\overline{\delta}_{**}^{1/2}\left(\frac{\rho}{\kappa}\right)^{3/p'}\\
&+\overline{\delta}_{**}^{1/2}\left(\frac{\kappa}{r}\right)+\left(\frac{r}{\kappa}\right)^{3/p'}
+\left(\frac{r}{\kappa}\right)^{(4m-10)/m}\bigg)\tilde{F}\left[\kappa\right]\\
\leq& C\left(\theta+\overline{\delta}_{**}\theta^{-3}+\overline{\delta}_{**}^{1/2}\theta^{-2+3/p'}
+\overline{\delta}_{**}^{1/2}\theta^{-2}+\theta^{6/p'}+\theta^{(8m-20)/m}\right)\tilde{F}\left[\kappa\right].
\end{align*}
If we select $\theta$ sufficiently small and then choose $\overline{\delta}_{**}$ small enough such that
\begin{align*}
C\left(\theta+\overline{\delta}_{**}\theta^{-3}+\overline{\delta}_{**}^{1/2}\theta^{-2+3/p'}
+\overline{\delta}_{**}^{1/2}\theta^{-2}+\theta^{6/p'}+\theta^{(8m-20)/m}\right) \leq \frac{1}{2},
\end{align*}
then the following iterative inequality can be obtained
\begin{align*}
\tilde{F}\left[\theta^{2}\kappa\right]\leq \frac{1}{2}\tilde{F}\left[\kappa\right].
\end{align*}
Besides, it is straightforward to observe that
\begin{align*}
\tilde{F}\left[R\right]\leq C_{1},
\end{align*}
where $C_{1}>0$ relies on $R$, $\left\|(u,B)\right\|_{L^\infty((-1,0);L^{2}(B_{1}))\cap L^2((-1,0);H^{1}(B_{1}))}$ 
and $\left\|(f,g)\right\|_{L^m((-1,0);L^{m}(B_{1}))}$. 
Therefore, there exists $r_{*}=r_{*}(C_1,\delta_{**})\in(0,\overline{R})$ derived from standard iteration arguments such that
\begin{align*}
\tilde{F}\left[r\right]\leq \delta_{**},\quad\forall~ 0<r<r_{*}.
\end{align*}
This result indicates that
\begin{align*}
\frac{1}{r}\int_{Q_{r}}\left(|\nabla u|^{2}+|\nabla B|^{2}\right) dxdt\leq \delta_{**},\quad \forall~0<r<r_{*},
\end{align*}
which enables us to apply Theorem \ref{the1-2} and subsequently determine the local H\"{o}lder continuity.
It means that the velocity field plays a more significant role than the magnetic field in the local regularity theory of the MHD equations, 
which is consistent with the findings in \cite{WZ2013}.
\end{proof}

\subsection*{Acknowledgements} This work was supported by the National Natural Science Foundation of China (Nos. 12471128 and 12401250).

\subsection*{Conflict of interest} The authors declare that there is no conflict of interest. We also declare that this
manuscript has no associated data.

\subsection*{Data availability} Data sharing is not applicable to this article as no datasets were generated or analysed
during the current study.

 \end{document}